\documentclass[11pt,reqno]{amsart}
\usepackage{amsmath,amssymb,amsthm}
\usepackage{color}
\usepackage[tmargin=1in,bmargin=1in,rmargin=1in,lmargin=1in]{geometry}
\usepackage[breaklinks=true]{hyperref}
\usepackage{enumitem}
\usepackage{mathrsfs}

\theoremstyle{plain}
\newtheorem{theorem}{Theorem}[section]
\newtheorem{corollary}[theorem]{Corollary}
\newtheorem{lemma}[theorem]{Lemma}
\newtheorem{proposition}[theorem]{Proposition}

\newtheorem{remark}[theorem]{Remark}

\theoremstyle{definition}

\numberwithin{equation}{section}
\numberwithin{table}{section}
\allowdisplaybreaks

\newcommand{\tl}{\widetilde{L}}
\newcommand{\tp}{\widetilde{P}}

\newcommand{\tm}{\widetilde{M}}

\newcommand{\rg}{\mathcal{R}(\widetilde{G}(F))}

\newcommand\bc{\mathbb C}
\newcommand\bz{\mathbb Z}

\newcommand\gf{\widetilde{G}(F)}
\newcommand\hf{\widetilde{G}(F')}
\newcommand\go{\widetilde{G}(\mathscr{O})}

\title{A Hecke algebra isomorphism over close local fields for metaplectic groups}
\author{Ritabrata Das}

\address{Department of Mathematics, Indian Institute of Science, Bangalore 560012, India;
PhD student
}
\email{ritabratadas@iisc.ac.in}

\subjclass[2020]{22E50, 11F70}

\begin{document}
\begin{abstract}   
We establish the Kazhdan isomorphism for the \(n\)-fold metaplectic cover of the reductive group \(\mathrm{SL}_2\) over two sufficiently close non-archimedean local fields \(F\) and \(F'\), both of which have residue characteristic coprime to \(n\) and contain all distinct \(n\)th roots of unity.
\end{abstract}

\maketitle
\section{Introduction}

Let $F$ be a local non-archimedean field with the ring of integers $\mathscr{O} $ and the maximal ideal $\mathscr{P} \subset \mathscr{O}$. Let $F$ contain the $n$-distinct $n$th roots of unity $\mu_n(F)$, and $n$ is coprime to the residue characteristic. The group $G(F)= \mathrm{SL_2}(F)$ has a unique nontrivial central extension $\widetilde{G}(F)$ of degree $n$:
\begin{align*}
    1 \longrightarrow \mu_n(F) \overset{i} \longrightarrow \widetilde{G}(F) \overset{p} \longrightarrow G(F) \longrightarrow 1.
\end{align*}
The group $Mp_2(F):=\gf$ is the $n$-fold metaplectic cover of $G(F)$. In this paper, we establish a Hecke algebra isomorphism over close local fields for $\gf$.

 For an object $X$ associated with the field $F$, we will use the notation $X'$ to denote the corresponding object over $F'$. In \cite{Kazhdan} Kazhdan  proved 
that for $G$, being a split, connected reductive group defined over $\mathbb{Z}$ and given $l \ge 1$, there exists $m \ge l$ such that if $F$ and $F'$ are 
$m$-close, then there is an isomorphism of Hecke algebras
\[
\mathrm{Kaz}_l : \mathcal{H}(G(F), K_l) \longrightarrow 
\mathcal{H}(G(F'), K'_l),
\]
where $K_l$ is the $l$-th usual congruence subgroup of $G(\mathscr{O})$. Hence, when the fields $F$ 
and $F'$ are sufficiently close, we have a bijection
\[
\begin{aligned}
&\{\text{Irreducible admissible } \mathbb{C}\text{-representations } (\pi,V) \text{ of } G(F) \text{ such that } \pi^{K_l} \neq 0\} \\
&\longleftrightarrow \{\text{Irreducible admissible } \mathbb{C}\text{-representations } (\pi',V') \text{ of } G(F') \text{ such that } \pi'^{K'_l} \neq 0\}.
\end{aligned}
\]

Ganapathy in \cite{hecke} extended this result for general connected reductive groups. Kazhdan's philosophy has been useful in studying the local Langlands correspondence for certain groups over local fields of positive characteristic (\cite{Jacquet}, \cite{Ganapathy-Varma1},\cite{Ganapathy-Varma2}).

There are three crucial steps in the proof of the Kazhdan isomorphism of Hecke algebras for reductive groups:

\begin{enumerate}
    \item[(1)] The group $G(F)$ admits a Cartan decomposition. 
    \item[(2)]  If the fields $F$ and $F'$ are $l$-close, then we obtain a natural isomorphism 
    \begin{align*}
        G(\mathscr{O})/K_l \cong G(\mathscr{O'})/K_l'.
    \end{align*}

    \item[(3)] The Hecke algebra $\mathcal{H}(G(F), K_l)$ is finitely presented.
\end{enumerate}

 We write $\mathscr{O}$ for the ring of integers of $F$. Let $\delta$ denote the Kubota cocycle for our metaplectic cover $\gf$ and let $S$ denote the corresponding splitting of $G(\mathscr{O})$ (see Section~2). For any subgroup $H$ of $G(\mathscr{O})$, we denote its image under the splitting by $H^*:=S(H)$.

 Fixing a faithful character $\varepsilon:\mu_n(F)\hookrightarrow \mathbb C^{\times}$, we define the genuine Hecke algebra with respect to the congruence subgroup $K_l^*$ for the covering group $\widetilde{G}(F)$ as follows 
\begin{align*}
\mathcal{H}_{\varepsilon} (\widetilde{G}(F),K_l^* )
:=\left\{
f :\widetilde{G}(F) \longrightarrow \mathbb C \,:\,
\begin{aligned}
&\bullet\ \text{$f$ is locally constant and compactly supported},\\
&\bullet\ f(\zeta g)=\varepsilon(\zeta)f(g),
\quad \forall \zeta \in \mu_n(F),\ g\in \widetilde{G}(F),\\
&\bullet\ f(k_1gk_2)=f(g),
\quad \forall k_1,k_2\in K_l^*,\ g \in \widetilde{G}(F).
\end{aligned}
\right\}.
\end{align*}
The algebra structure is given by the convolution of functions:
\begin{align*}
    (f * h)(x)=\int_{\gf} f(y) h(y^{-1}x) dy ,         
\end{align*}
where $dy$ is the Haar measure of the group $\gf$, which is normalized such that the measure of $\mu_n(F)\times K_l^*$ is $1$.

In this paper, we establish an isomorphism between the genuine Hecke algebras over close local fields. Our proof relies on establishing analogs of the three crucial steps (1), (2), and (3) of Kazhdan's proof for reductive groups. However, we do encounter several technical difficulties along the way.

\qquad Because the unipotent subgroups of $G(F)$ split canonically in $\gf$ (see \cite[Prop. 4.1]{principal}, \cite[Appendix 1]{MoeglinWaldspurger}), we can fix preferred liftings of the elements in the diagonal torus. These are obtained via the canonical liftings of the root subgroups (see Section~\ref{lift of torus}). 

We derive the Cartan decomposition for the $n$-fold covering group of $\mathrm{SL_2}$ in Section 3. This result is implicit in Theorem 9.2 of \cite{principal}. Peskin explicitly worked out the special case for the $2$-fold metaplectic cover in \cite[Lemma 2.6]{peskin}.
 
A main difficulty in proving the isomorphism between the genuine Hecke algebras lies in understanding the behavior of Kubota's cocycle on the $K_l^{*}$-double cosets in $\widetilde{G}(\mathscr{O})$. We carefully analyze the Kubota cocycle in Section 5 and provide technical lemmas in Section 7 to show that the natural vector space isomorphism between the genuine Hecke algebras respects convolution when the fields are sufficiently close.

Finally, we show that the genuine Hecke algebra is finitely presented, which is a key step in proving the Hecke algebra isomorphism. For reductive groups, this property is proved in \cite[\S 2]{BDKV}, where the authors remarked that the proof should also extend to covers of reductive groups. The proof relies heavily on the Bernstein decomposition, which was established for covering groups in \cite{KaplanDani}. We then follow the framework laid out for reductive groups in \cite{BDKV} to complete the proof.

Our main result is stated as follows:
\begin{theorem}\label{intro theorem} ( For a precise version, see Theorem~\ref{isomorphism})
    Let $l$ be an integer such that $l\geq 1$. Then, there exists an integer $m\in \mathbb{Z}^+$ with $m\ge l$ such that, for any non-archimedean local field $F'$ which is $m$-close to $F$, we have the following algebra isomorphism 
\begin{align*}
\Phi_l:\mathcal{H}_\varepsilon(\widetilde{G}(F),K_l^*) \longrightarrow 
\mathcal{H}_{\varepsilon'}(\widetilde{G}(F'),K_l'^*).
\end{align*}
\end{theorem}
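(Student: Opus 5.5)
The plan follows Kazhdan's three-step strategy, adapted to the genuine setting.

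\emph{Step 1: a basis for the genuine Hecke algebra.} Let $\widetilde{T}^+$ be the preferred lifts (via the canonical splittings of the root subgroups) of the dominant diagonal elements $t_k=\mathrm{diag}(\varpi^k,\varpi^{-k})$, $k\ge 0$. The Cartan decomposition for $\gf$ from Section 3 gives
\[
\gf=\bigsqcup_{k\ge 0}\mu_n(F)\,\go\,\widetilde{t}_k\,\go .
\]
Refining each $\go$ into $K_l^*$-cosets gives $\gf=\bigcup \mu_n(F)K_l^*\,S(a)\widetilde t_k S(b)K_l^*$, with $a,b$ running over representatives of $G(\mathscr O)/K_l$. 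A double coset $\mu_n K_l^* x K_l^*$ supports a nonzero genuine function exactly when no $\zeta\neq1$ lies in $K_l^*xK_l^*x^{-1}$; that is, when the Kubota cocycle $\delta$ is trivial on the relevant intersection $K_l\cap xK_lx^{-1}$. Call such $x$ \emph{relevant}. The normalized characteristic functions $f_x$ of the relevant double cosets then form a basis of $\mathcal{H}_\varepsilon(\gf,K_l^*)$.

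\emph{Step 2: matching bases over $F'$.} If $F,F'$ are $l$-close, fix the isomorphism $\mathscr O/\mathscr P^l\cong\mathscr O'/\mathscr P'^l$ together with $\varpi\mapsto\varpi'$. For $m$ large, also fix the compatible isomorphism $\mu_n(F)\cong\mu_n(F')$; this exists because $n$ is prime to $p$, so roots of unity are detected mod $\mathscr P$, and we use it to match $\varepsilon$ with $\varepsilon'$. This gives $G(\mathscr O)/K_l\cong G(\mathscr O')/K_l'$, and hence a bijection of the $K_l$-double cosets $K_l a t_k bK_l\leftrightarrow K_l' a' t'_k b' K_l'$ for every $k$. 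Using the Kubota cocycle analysis of Section 5 (explicit Hilbert symbols, which depend only on valuations and on residues mod $\mathscr P$ when $p\nmid n$), I will show two things: relevance of $x=S(a)\widetilde t_kS(b)$ is preserved, and the $\mu_n$-coset ambiguity in choosing representatives matches. Defining $\Phi_l(f_x)=f_{x'}$ then gives a vector space isomorphism with $\Phi_l(f_{x})(x'\zeta')=\ldots$ compatible with $\varepsilon'$.

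\emph{Step 3: multiplicativity up to a bound, then finite presentation.} The main obstacle is showing that $f_x*f_y$ and $f_{x'}*f_{y'}$ have matching structure constants. A coefficient of $f_x*f_y$ at $z$ counts decompositions $z\in xK_l^*y$ modulo $K_l^*$, weighted by $\varepsilon$ of the cocycle discrepancy. I plan to use the technical lemmas of Section 7 to express each such product, for $k$ bounded by some $N$, entirely through computations in $G(\mathscr O)/K_{m}$ for some $m=m(l,N)$, plus the explicit $\delta$-values on torus elements and root subgroups. Those data transfer under $m$-closeness, so the map is multiplicative on the finite-dimensional subspace spanned by supports with $k\le N$. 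The hardest part is controlling $\delta(S(a)\widetilde t_k, S(b)\widetilde t_j)$, i.e. the cocycle on mixed products, and its dependence on entries only modulo $\mathscr P^m$. For this I would use Kubota's formula $\delta(g,h)=(x(gh)/x(g),x(gh)/x(h))_n$ and the fact that the Hilbert symbol $(u,v)_n$ for units is locally constant mod $\mathscr P$.

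Finally, the genuine Hecke algebra is finitely presented, by the Bernstein decomposition for covers as in \cite{KaplanDani}, following \cite{BDKV}. Choose finitely many generators $f_{x_1},\dots,f_{x_r}$ with finitely many relations; these involve only supports with $k\le N$ for some $N$. Taking $m\ge m(l,N)$, the relations transfer, so $\Phi_l$ extends to an algebra homomorphism. Applying the same argument with the roles of $F$ and $F'$ swapped (which is symmetric for $m$ large) gives an inverse, so $\Phi_l$ is an algebra isomorphism. It agrees with the basis bijection because both are determined on generators and are multiplicative on the bounded range.
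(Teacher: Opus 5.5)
\textbf{Gap in the final step.} Your outline follows the paper's architecture: a basis from the Cartan decomposition, matching of $K^*$-double cosets and cocycles via Hilbert symbols of Teichm\"uller parts, bounded-support multiplicativity as in Lemma~\ref{algebra-iso}, and finite presentation. The final assembly, however, has a genuine gap. The presentation gives you an algebra homomorphism $\bar\tau$ with $\bar\tau(f_{x_i})=\Phi_l(f_{x_i})$. You then claim $\bar\tau=\Phi_l$ ``because both are determined on generators and are multiplicative on the bounded range.'' But the basis bijection $\Phi_l$ is only a linear map. It is known to be multiplicative only for products of functions supported on $\bigcup_{k\le N}\widetilde G_k$, so it is not determined by its values on generators. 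Writing a basis element as $f_x=P_x(f_{x_1},\dots,f_{x_r})$, you would need multiplicativity on every partial product of $P_x$. Those supports grow with $x$, so the required closeness would depend on $x$, and there are infinitely many $x$.

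The swap-roles argument does not rescue bijectivity either. The presentation of $\mathcal H_{\varepsilon'}(\widetilde G(F'),K_l'^*)$, and hence the closeness the swapped argument needs, depends on $F'$. So it cannot be run with an $m$ fixed in terms of $F$ and $l$ alone. Moreover, checking $\bar\tau'\circ\bar\tau=\mathrm{id}$ on generators runs into the same unbounded-support problem.

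\textbf{What is missing, and how the paper supplies it.} You need a product formula for a \emph{specific} generating set that holds over every field, with no closeness hypothesis. The paper takes the generators $\psi_{K_l^*\tilde t_1K_l^*}$ and $\psi_{K_l^*gK_l^*}$, $g\in G(\mathscr O)^*/K_l^*$ (Remark~\ref{finitely presented}), and proves in Lemma~\ref{volume} two identities:
\[
\psi_{K_l^*\tilde t_kK_l^*}*\psi_{K_l^*\tilde t_rK_l^*}=\psi_{K_l^*\tilde t_{k+r}K_l^*},
\qquad
\psi_{K_l^*g_1\tilde t_kg_2K_l^*}=\psi_{K_l^*g_1K_l^*}*\psi_{K_l^*\tilde t_kK_l^*}*\psi_{K_l^*g_2K_l^*}.
\]
The first uses the Iwahori factorization of $K_l^*$, the conjugation formulas of Section~\ref{torus normalize unipotent}, and additivity in $k$ of $[K_l^*:K_l^*\cap\tilde t_kK_l^*\tilde t_k^{-1}]=q^{2k}$. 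Hence every basis element is the same word in the generators over $F$ and over $F'$. So $\bar\tau=\Phi_l$ on the whole basis, bijectivity is inherited from Corollary~\ref{linear space isomorphism}, and no inverse has to be constructed.

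\textbf{Your Steps 1--2 are sound.} Your relevance bookkeeping is in fact more careful than the paper's. For $l\ge1$ every $K_l^*$-double coset is relevant: the discrepancy on the pro-$p$ group $K_l\cap xK_lx^{-1}$ is a homomorphism into $\mu_n(F)$, whose order is prime to $p$, so it is trivial. The $\mu_n$-labels inside $\widetilde G(\mathscr O)\tilde t_k\widetilde G(\mathscr O)$ are governed by torus elements. For $h=\mathrm{diag}(u,u^{-1})$ one gets $h^*\tilde t_k(h^*)^{-1}=\tilde t_k\,\omega(u)^{-2k(q-1)/n}$. This is nontrivial when $n\nmid 2k$, but it depends only on $u\bmod\mathscr P$, so it is transferred by $\phi$, as your Step 2 anticipates. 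Consequently the disjointness over $\zeta$ asserted in Proposition~\ref{cartan} fails for such $k$: $S_k\supset T(\mathscr O)$ is not generated by its unipotent parts. Your route does not depend on that disjointness.
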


By combining this genuine Hecke algebra isomorphism with the equivalence of categories established in Section 8, we obtain the following corollary:
\begin{corollary} 
Let $F$ and $F'$ be sufficiently close local fields, and let $l$ be as in Theorem~\ref{intro theorem}. Then, we have the following bijection: 
\[
\begin{aligned}
&\{\text{ Irreducible genuine $\bc$-representations } (\pi,V) \text{ of } \gf\text{ such that  } \pi^{K_l^*}\neq 0\} \\
&\longleftrightarrow \{\text{ Irreducible genuine $\bc$- representations }  (\pi',V') \text{ of } \hf\text{ such that  } \pi'^{K_l'^*}\neq 0 \} .
\end{aligned}
\]
\end{corollary}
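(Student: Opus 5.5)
The statement to prove is the Corollary: a bijection of irreducible genuine representations with nonzero $K_l^*$-fixed vectors. I would deduce it formally from Theorem~\ref{intro theorem} together with a Hecke-algebra version of the Bernstein--Borel correspondence for genuine representations. The argument has three steps.

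\emph{Step 1: set up the functor.} Let $e_l\in\mathcal{H}_\varepsilon(\widetilde{G}(F),K_l^*)$ be the idempotent supported on $\mu_n(F)\times K_l^*$, with $e_l(\zeta k)=\varepsilon(\zeta)$. With the chosen normalization of $dy$ it is the unit of the genuine Hecke algebra. For a smooth genuine representation $(\pi,V)$, the space $V^{K_l^*}=\pi(e_l)V$ is naturally a module over $\mathcal{H}_\varepsilon(\widetilde{G}(F),K_l^*)$. Genuineness is exactly what makes the $\varepsilon$-equivariance of the functions compatible with the action, since $\mu_n(F)$ acts on $V$ through $\varepsilon$.

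\emph{Step 2: irreducibles to simple modules.} The standard argument of Bernstein (or of Bushnell--Kutzko for idempotents) shows that $V\mapsto V^{K_l^*}$ gives a bijection between two sets: irreducible genuine $V$ with $V^{K_l^*}\neq0$, and simple modules over $\mathcal{H}_\varepsilon(\widetilde{G}(F),K_l^*)$. It uses only smoothness and admissibility, not reductivity. Surjectivity: given a simple module $M$, form $\mathcal{H}_\varepsilon(\widetilde{G}(F))e_l\otimes M$ and take its unique irreducible quotient. Injectivity: an irreducible $V$ is generated by $V^{K_l^*}$. In fact the paper's Section~8 should give more, namely an equivalence between $\mathcal{H}_\varepsilon(\widetilde{G}(F),K_l^*)$-modules and the full subcategory of genuine representations generated by their $K_l^*$-fixed vectors. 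The key input there is that this subcategory is closed under subquotients, which comes from the Bernstein decomposition for covering groups \cite{KaplanDani}, exactly as in \cite{BDKV}. I would invoke that equivalence, since it restricts to the bijection on irreducibles: simple objects correspond to simple modules.

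\emph{Step 3: transport across the isomorphism.} Apply Theorem~\ref{intro theorem}. The algebra isomorphism $\Phi_l$ induces a bijection between simple modules over the two genuine Hecke algebras, by pulling back the module structure along $\Phi_l^{-1}$. Composing three bijections gives the claim: irreducible genuine representations of $\gf$ with $K_l^*$-fixed vectors, then simple modules over the $F$-algebra, then simple modules over the $F'$-algebra, then irreducible genuine representations of $\hf$ with $K_l'^*$-fixed vectors.

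The only real obstacle is the categorical statement in Step 2 for the covering group. For the bijection on irreducibles alone, though, the elementary idempotent argument is enough and avoids any subquotient-closure issue. So I would present that argument and cite Section~8 only for the stronger equivalence.
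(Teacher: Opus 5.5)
Your proof is correct, and its skeleton matches the paper's: the algebra isomorphism $\Phi_l$ of Theorem~\ref{isomorphism}, composed with a correspondence between irreducible genuine representations having nonzero $K_l^*$-fixed vectors and simple modules over the genuine Hecke algebra. The difference lies in how you justify the second ingredient.

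The paper cites Corollary~\ref{equivalence of categories}, the full equivalence $\mathcal{R}^l(\gf)\simeq\mathcal{H}_\varepsilon(\gf,K_l^*)$-mod. Its proof uses the Bernstein decomposition for covers \cite{KaplanDani}, Proposition~\ref{condition for fixed vectors} and the finiteness of $\Sigma_l$ to show that $\mathcal{R}^l(\gf)$ is closed under subquotients. The bijection on irreducibles is then read off from the simple objects on each side.

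Your idempotent argument needs none of that. It rests on two facts: $V^{K_l^*}$ is a simple module when $V$ is irreducible, and $V$ is recovered as the unique irreducible quotient of $\mathcal{H}_\varepsilon(\gf)e_l\otimes_{\mathcal{H}_\varepsilon(\gf,K_l^*)}V^{K_l^*}$. This holds for any compact open subgroup of any totally disconnected group. It does not use admissibility either, contrary to what you say. So your deduction of the corollary from Theorem~\ref{isomorphism} no longer depends on Propositions~\ref{properties of congruence} and~\ref{condition for fixed vectors}, whose proofs the paper only sketches by reference to the linear case.

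What the paper's route buys in return is a stronger conclusion. Applied over both fields and combined with $\Phi_l$, it gives an equivalence $\mathcal{R}^l(\gf)\simeq\mathcal{R}^l(\hf)$ between the categories of representations generated by their $K_l^*$-fixed (resp.\ $K_l'^*$-fixed) vectors, not merely a bijection of irreducibles.

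One convention in your Step~1 should be fixed. With $f(\zeta g)=\varepsilon(\zeta)f(g)$ and $\pi(f)=\int f(g)\pi(g)\,dg$, the substitution $g\mapsto\zeta g$ gives $\pi(f)=\varepsilon(\zeta)c(\zeta)\pi(f)$, where $c(\zeta)$ is the scalar by which $\zeta$ acts on $V$. So the genuine Hecke algebra acts nontrivially, and $\pi(e_l)$ is the projector onto $V^{K_l^*}$, precisely when $\mu_n(F)$ acts by $\varepsilon^{-1}$, not by $\varepsilon$ as you wrote. This is harmless, and the paper is equally silent on it, provided you use the same convention over $F'$ with $\varepsilon'=\varepsilon\circ\phi^{-1}$.
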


This result allows us to study the representations of the $n$-fold metaplectic cover of $\mathrm{SL_2}$ over close local fields.

\section{Notations}
\subsection{}\label{hilbert}Let us fix a uniformizer $t$ of $F$, and the valuation $v$ of $F$ is normalized so that $v(t)=1$. The Hilbert symbol of degree $n$ is denoted by $(\cdot,\cdot)_F$. Let $p$ be the residue characteristic of $F$ and let $q$ be the cardinality of the residue field.  Since $(n,p)=1$, the Hilbert symbol for any $a,b\in F^\times$ can be calculated using the following formula, given in \cite[Prop. 3.4]{Neukirch} 
\begin{align}\label{hilbert symbol formula}
    (a,b)_F=\omega\Big( (-1)^{v(a)v(b)} \frac{b^{v(a)}}{a^{v(b)}}\Big)^{\frac{q-1}{n}},
\end{align}
where for $x\in\mathscr{O}^\times$ with decomposition $x=\omega(x)x'\in \mu_{q-1}(F)\times (1+\mathscr{P)}$; $\omega(x)$ denotes the unique $\mu_{q-1}(F)$ part of $x$.
\subsection{}\label{kubota}In \cite{Kubota}, Kubota provided an explicit construction of the metaplectic group as follows. As a set,
\begin{align*}
   \gf=G(F)\times\mu_n(F),
\end{align*}
and the group operation is defined by  $(g,a)(h,b)=(gh, \delta(g,h)ab)$, where $\delta\in H^2(G(F),\mu_n(F))$ is Kubota's cocycle.

The cocycle is explicitly given by the formula
\begin{equation}\label{kubota cocycle formula}
\delta(g,h) = \left( \frac{X(gh)}{X(g)},\frac{X(gh)}{X(h)} \right)_F,
\end{equation}
where $(\cdot,\cdot)_F$ denotes the $n$-th Hilbert symbol of $F$, and the function $X$ is defined on matrices by
\[
X\begin{pmatrix} a & b \\ c & d \end{pmatrix} = 
\begin{cases} 
c & \text{if } c \neq 0, \\ 
d & \text{if } c = 0. 
\end{cases}
\]
By \cite[\S3, Thm.~2]{Kubotaauto}, the Kubota cocycle $\delta$ allows for a splitting of $G(\mathscr{O})$. This splitting is a homomorphism $S:G(\mathscr{O})\longrightarrow \widetilde{G}(F)$ defined by $g\longmapsto (g,\theta(g))$, where 
\[
\theta\begin{pmatrix} a & b \\ c & d \end{pmatrix} = 
\begin{cases} 
1 & \text{if } c=0 \text{ or } c\in\mathscr{O}^\times, \\ 
(c,d)_F & \text{otherwise}. 
\end{cases}
\]
 That is, $\theta(gh)=\theta(g)\theta(h)\delta(g,h)$. For any subgroup $H \subset G(\mathscr{O})$, we set $H^* := S(H)$. 
For $h \in H$, we denote the element $S(h)=(h,\theta(h)) \in H^*$ by $h^*$.

 \qquad   Consider the congruence subgroup $K_m=Ker (G(\mathscr{O}) \longrightarrow G(\mathscr{O}/\mathscr{P}^m))$ in $G(\mathscr{O})$. For $m\geq 1$, if $k=\begin{pmatrix}
a & b \\
c & d
\end{pmatrix}\in K_m$, then $d\in 1+\mathscr{P}^m$ and by the formula \eqref{hilbert symbol formula}, it is clear that $\theta(k)=1$. Hence, $K_m^*=\{(k,1):k\in K_m\}$ for $m\geq 1$.

\subsection{}\label{unipotent splitting}The unipotent subgroups of $G(F)$ split canonically ( see \cite[Prop. 4.1]{principal}, \cite[Appendix 1]{MoeglinWaldspurger}) in $\gf$. Let $U_+$ and $U_-$ denote the upper triangular and lower triangular unipotent subgroups of $G(F)$, respectively. 

The canonical splittings for $U_+$ and $U_-$ with respect to Kubota's cocycle are as follows
\begin{align}\label{splitting unipotent upper}
   U_+ \rightarrow \gf, \hspace{.5cm}
u_+(x):=\begin{pmatrix}
     1 & x\\
     0& 1
\end{pmatrix}
\mapsto  \tilde{u}_+(x):=(u_+(x),1)
\end{align}
and,
\begin{align}\label{splitting unipotent lower}
   U_- \rightarrow \gf, \hspace{.5cm}
u_-(x)=\begin{pmatrix}
     1 & 0\\
     x& 1
\end{pmatrix}
\mapsto \tilde{u}_-(x):=(u_-(x),1) .
\end{align}
\qquad It is straightforward to verify that \eqref{splitting unipotent upper} defines a splitting for the unipotent upper triangular subgroup $U_+$. Thus, it remains only to show that \eqref{splitting unipotent lower} yields a splitting for the unipotent lower triangular subgroup $U_-$. To prove this, it suffices to show that $\delta(u_-(x), u_-(y)) = 1$ for all $x,y\in F$. If either $x=0$ or $y=0$, the result is immediate. Hence, we may assume that both $x$ and $y$ are nonzero.
We have
\begin{equation*}
u_-(x) u_-(y) = \begin{pmatrix} 1 & 0 \\ x+y & 1 \end{pmatrix}.
\end{equation*}

First, suppose that $x+y=0$. Then, by the formula \eqref{kubota cocycle formula}
\[
\delta(u_-(x), u_-(y)) = \left(\frac{1}{x}, \frac{1}{y}\right)_F=\left(\frac{1}{x}, -\frac{1}{x}\right)_F = 1.
\]
The last equality follows from \cite[Prop.~3.2(v), §3, Ch.~V]{Neukirch}.

\qquad Now assume that $x+y \neq 0$. In this case,
\[
\delta(u_-(x), u_-(y)) 
= \left(\frac{x+y}{x}, \frac{x+y}{y}\right)_F
= \left(\frac{x}{x+y}, \frac{y}{x+y}\right)_F
= \left(\frac{x}{x+y}, 1 - \frac{x}{x+y}\right)_F
= 1.
\]
Here, we have used the fact that $(a,b)_F=(a^{-1},b^{-1})_F$, which is clear from the formula \eqref{hilbert symbol formula} and the last equality again follows from \cite[Prop.~3.2(v), §3, Ch.~V]{Neukirch}.
This completes the proof.

\subsection{}\label{lift of torus} For $x\in F^*$, we define the elements
\begin{align*}
    &w(x)=u_+(x)u_-(-x^{-1})u_+(x),\\
    &\lambda_0(x)=w(x)w(-1)=\begin{pmatrix}
        x&0\\
        0& x^{-1}
    \end{pmatrix}.
\end{align*}
Using the canonical liftings of $U_+$ and $U_-$ from Section~\ref{unipotent splitting}, we choose the preferred liftings of $w(x)$ and $h(x)$ as follows:
\begin{align*}
    &\tilde{w}(x)=\tilde{u}_+(x)\tilde{u}_-(-x^{-1})\tilde{u}_+(x)=(w(x),1),\\
    &\tilde{\lambda}_0(x)=\tilde{w}(x)\tilde{w}(-1)=(\lambda_0(x),(-1,x)_F).
\end{align*}
\begin{proof}
By definition, the element $\tilde{w}(x)$ is given by
\begin{align*}
   \tilde{w}(x) = \big( \omega(x), \delta(\tilde{u}_+(x), \tilde{u}_-(-x^{-1})) \, \delta(\tilde{u}_+(x)\tilde{u}_-(-x^{-1}), \tilde{u}_+(x)) \big). 
\end{align*}

We first evaluate the individual cocycle terms. For the first factor, an immediate calculation yields
\[
\delta\big(\tilde{u}_+(x), \tilde{u}_-(-x^{-1})\big) = (-x^{-1}, 1)_F = 1.
\]
For the second factor, expressing the arguments explicitly as matrices, we have
\[
\delta\big(\tilde{u}_+(x)\tilde{u}_-(-x^{-1}), \tilde{u}_+(x)\big) = \delta \left( \begin{pmatrix} 0 & x \\ -x^{-1} & 1 \end{pmatrix}, \begin{pmatrix} 1 & x \\ 0 & 1 \end{pmatrix} \right) = (1, -x^{-1})_F = 1.
\]
Combining these evaluations, we obtain $\tilde{w}(x) = (\omega(x), 1)$.

Similarly, for the element $\tilde{\lambda}_0(x)$, we have
\[
\tilde{\lambda}_0(x) = \big(\lambda_0(x), \delta(\omega(x), \omega(-1))\big).
\]
Evaluating this final cocycle term yields
\[
\delta(\omega(x), \omega(-1)) = \delta \left( \begin{pmatrix} 0 & x \\ -x^{-1} & 0 \end{pmatrix}, \begin{pmatrix} 0 & -1 \\ 1 & 0 \end{pmatrix} \right) = (-1, x^{-1})_F = (-1, x)_F,
\]
where we used the fact that $(-1, x^{-1})_F = (-1, x)_F$. It follows that 
\[
\tilde{\lambda}_0(x) = \big(\lambda_0(x), (-1, x)_F\big),
\]
which completes the proof.
\end{proof}

\subsection{}\label{torus splitting} For the uniformizer $t$ of $F$, the formula for the Hilbert symbol \eqref{hilbert symbol formula} gives the following (\cite[Sec.~ 2.1.5]{peskin}): 

\[
\tilde{\lambda}_0(t)^k =(\lambda_0(t)^k,\beta_k),
\]
where
\[
\beta_k =
\begin{cases}
(-1,t)_F &\text{ if } k \equiv 1 \text{ or } 2\pmod{4}, \\
1 &\text{ if } k \equiv 0 \text{ or } 3\pmod{4}.
\end{cases}
\]
Also, $\beta_k \beta_{-k}=(-1,t)_F^k$.

\begin{proof}
From the formula of the Hilbert symbol in \eqref{hilbert symbol formula}, we have the identity
\begin{equation}\label{fact fromhilbert symbol}
    (-1,t)_F = (t,t)_F = (t,t^{-1})_F.
\end{equation}
Using this relation, we compute $\beta_k$ as follows:
\begin{align*}
    \beta_k &= (-1,t)_F^k \, (t,t)_F \, (t,t)_F^2 \cdots (t,t)_F^{k-1} \\
    &= (-1,t)_F^k \, (-1,t)_F^{\frac{k(k-1)}{2}} \\
    &= (-1,t)_F^{\frac{k(k+1)}{2}}.
\end{align*}
The parity of the exponent $\frac{k(k+1)}{2}$ depends on the congruence class of $k$ modulo $4$. Specifically, we have
\[
\frac{k(k+1)}{2} \text{ is } 
\begin{cases} 
    \text{even} & \text{if } k \equiv 0 \text{ or } 3 \pmod{4}, \\ 
    \text{odd}  & \text{if } k \equiv 1 \text{ or } 2 \pmod{4}. 
\end{cases}
\]
Furthermore, computing the product $\beta_k \beta_{-k}$ yields
\[
\beta_k \beta_{-k} = (-1,t)_F^{\frac{k(k+1)}{2} + \frac{-k(-k+1)}{2}} = (-1,t)_F^{k^2} = (-1,t)_F^k.
\]

\end{proof}

\subsection{} \label{torus normalize unipotent}

 Combining \eqref{splitting unipotent upper} with the results from Section~\ref{torus splitting}, we have the following:
\begin{align*}
    \tilde{\lambda}_0(t)^{k}\tilde{u}_{+}(x)\tilde{\lambda}_0(t)^{-k}
    &= \left(u_{+}(t^{2k}x),\, (t,t^{-1})_F^{k^2}\beta_k\beta_{-k}\right) \\
    &= \left(u_{+}(t^{2k}x),\, (t,t^{-1})_F^{k^2} (-1,t)_F^k\right) \\
    &= \left(u_{+}(t^{2k}x),\, 1\right) \\
    &= \tilde{u}_+(t^{2k}x),
\end{align*}
where we used the identity $\beta_k\beta_{-k} = (-1,t)_F^k$ from Section~\ref{torus splitting} along with the relation $(t,t^{-1})_F = (-1,t)_F$ from \eqref{fact fromhilbert symbol}. 

Similarly, for the lower unipotent subgroup, a direct computation shows that
\begin{align*}
    \tilde{\lambda}_0(t)^{k}\tilde{u}_{-}(x)\tilde{\lambda}_0(t)^{-k}
    &= \left(u_{-}(t^{-2k}x),\,(x,t^{-k})_F (t^{-k},t^{-k}x)_F \beta_k\beta_{-k}\right) \\
    &= \left(u_{-}(t^{-2k}x),\,(x,t^{-k})_F (t^{-k},t^{-k})_F (t^{-k},x)_F (-1,t)_F^k\right) \\
    &= \left(u_{-}(t^{-2k}x),\,(x,t^{-k})_F (t^{-k},x)_F (t,t)_F^{k^2} (-1,t)_F^k\right) \\
    &= \left(u_{-}(t^{-2k}x),\,1\right) \\
    &= \tilde{u}_-(t^{-2k}x). 
\end{align*}
Here, the simplification relies on the facts that $(a,b)_F (b,a)_F = 1$ for all $a,b \in F^{\times}$ \cite[Prop.~3.2(iv), §3, Ch.~V]{Neukirch}, and that $(t,t)_F = (-1,t)_F$ via \eqref{fact fromhilbert symbol}.
\section{Cartan Decomposition} 
Let  $T \subset G(F)$  be the maximal split torus consisting of diagonal matrices, and let $B \subset G(F)$ be the Borel subgroup of upper triangular matrices containing $T$. Denote the characters by $X^{*}(T)$ and the cocharacters by $X_{*}(T)$. Then, $X^*(T)\cong \bz\chi_{_0}$, where $\chi_{_0}(\begin{pmatrix}
    x&0\\
    0& x^{-1}
\end{pmatrix})=x$ and $X_*(T)\cong \bz \lambda_0$, where $\lambda_0(x)=\begin{pmatrix}
    x&0\\
    0& x^{-1}
\end{pmatrix}$. The set of roots associated with $(T, B)$ is $\phi=\{\pm 2\chi\}$. Since $\langle \chi_{_0},\lambda_0\rangle=1$, the set of dominant coweights is given by
\begin{align*}
    \Lambda^{+} &=\{\lambda \in X_{*}(T) : \langle\alpha,\lambda \rangle \geq 0 \hspace{.2cm}\forall \alpha \in \phi^{+}\}\\
    &=\{k\cdot \lambda_0 : k\in \bz_{\ge 0}\}.
\end{align*}
The group $G(F)$ has the following Cartan decomposition:
\begin{align*}
    G(F)
    &=\bigsqcup_{\lambda \in \Lambda^{+}} G(\mathscr{O}) \lambda(t^{-1}) G(\mathscr{O)}.\\
    &=\bigsqcup_{k\geq 0} G(\mathscr{O}) \lambda_0(t)^{-k} G(\mathscr{O)}.
\end{align*}

From now on, we are going to denote $\tilde{\lambda}_0(t)^{-k}$ by $\tilde{t}_k$.
\begin{remark}\label{before Cartan}
    $\widetilde{G}(F)=\underset{k\in \bz_{\geq 0}}{\bigsqcup}\widetilde{G}(\mathscr{O}) \tilde{t}_k \widetilde{G}(\mathscr{O})$ as observed in \cite[Section 2.3]{peskin}.
\end{remark}
Proof. Let $(g,\zeta)\in \widetilde{G}(F)$. The Cartan Decomposition of $G(F)$ yields $g=g_1 t_k g_2$ for some $g_1,g_2\in G(\mathscr{O})$ and $k\in \bz_{\geq 0}$. Say,
\begin{align*}
    (g_1,1)\tilde{t}_k(g_2,1)=(g,\eta), \hspace{.2cm} \textit{ where} \hspace{.2cm}\eta\in\mu_n(F).
\end{align*}
Then, $(g,\zeta)=(g,\eta)(1,\eta^{-1}\zeta)=(g_1,1)\tilde{t}_k(g_2,\eta^{-1}\zeta)\in \widetilde{G}(\mathscr{O}) \tilde{t}_k \widetilde{G}(\mathscr{O})$.\\
Suppose there exists an element $(g,\zeta)$ such that $(g,\zeta)\in\widetilde{G}(\mathscr{O}) \tilde{t}_k \widetilde{G}(\mathscr{O})\cap\widetilde{G}(\mathscr{O}) \tilde{t}_r \widetilde{G}(\mathscr{O})$, for some $k\neq r\in \bz_{\geq 0}$. That will give us that $g\in G(\mathscr{O})t_kG(\mathscr{O})\cap G(\mathscr{O})t_r G(\mathscr{O})$, a contradiction. Hence, the union is disjoint.

\begin{proposition}\label{cartan} 
   $\widetilde{G}(F)$ has the Cartan decomposition
    \begin{align*}
      \widetilde{G}(F) = \bigsqcup_{\substack{k\geq0 \\ \zeta \in \mu_n(F)}}  G(\mathscr O)^{*}\tilde{t}_k \zeta G(\mathscr O)^{*}.
    \end{align*}   
\end{proposition}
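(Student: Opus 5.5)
Since $\widetilde{G}(\mathscr{O})=p^{-1}(G(\mathscr{O}))=G(\mathscr{O})^*\cdot\mu_n(F)$ and $\mu_n(F)$ is central, the plan is to refine Remark~\ref{before Cartan}. Each element of $\widetilde{G}(\mathscr{O})$ has the form $(g,\theta(g))\cdot(1,\eta)$ with $g\in G(\mathscr{O})$, $\eta\in\mu_n(F)$. Here $(1,\eta)$ is central, because $\delta(1,h)=\delta(h,1)=1$ by \eqref{kubota cocycle formula}. Hence
\[
\widetilde{G}(\mathscr{O})\tilde{t}_k\widetilde{G}(\mathscr{O})=\bigcup_{\zeta\in\mu_n(F)}G(\mathscr O)^*\tilde{t}_k\zeta G(\mathscr O)^*.
\]
Combined with Remark~\ref{before Cartan}, this shows that $\widetilde{G}(F)$ is the union of the sets $G(\mathscr O)^*\tilde{t}_k\zeta G(\mathscr O)^*$. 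For distinct $k$ these sets are disjoint, again by Remark~\ref{before Cartan}, since they project to different $G(\mathscr{O})$-double cosets in $G(F)$.

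The real content is disjointness for fixed $k$ and distinct $\zeta\neq\zeta'$. Since $\zeta$ is central, the double cosets for $\zeta$ and $\zeta'$ either coincide or are disjoint. They coincide exactly when $\zeta^{-1}\zeta'\tilde{t}_k\in G(\mathscr O)^*\tilde{t}_kG(\mathscr O)^*$, that is, when there exist $g_1,g_2\in G(\mathscr{O})$ with
\[
g_1^*\,\tilde{t}_k\,g_2^*=\xi\,\tilde{t}_k,\qquad \xi=\zeta^{-1}\zeta'\neq 1.
\]
So I must show: whenever $g_1t_kg_2=t_k$ in $G(F)$ (where $t_k=\lambda_0(t)^{-k}$), the lifted identity holds with $\xi=1$. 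Equivalently, the map $\epsilon:\{(g_1,g_2):g_1t_kg_2=t_k\}\to\mu_n(F)$ recording the discrepancy is trivial. The pairs are determined by $g_1$ ranging over $H_k:=G(\mathscr{O})\cap t_kG(\mathscr{O})t_k^{-1}$, with $g_2=t_k^{-1}g_1^{-1}t_k$. The map $g_1\mapsto\epsilon(g_1)$ is then a character of $H_k$, because $g\mapsto \tilde t_k^{-1}(g^{*})^{-1}\tilde t_k$ and $g\mapsto (t_k^{-1}g^{-1}t_k)^*$ are both homomorphisms lifting the same map into $G(\mathscr O)$, so they differ by a $\mu_n$-valued character.

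It therefore suffices to check $\epsilon=1$ on generators of $H_k$. For $k\ge1$, $H_k$ consists of matrices in $G(\mathscr{O})$ with lower-left entry in $\mathscr{P}^{2k}$. It is generated by $T(\mathscr{O})$, $U_+(\mathscr{O})$ and $U_-(\mathscr{P}^{2k})$, via the Iwahori-type factorization $U_-(\mathscr P^{2k})T(\mathscr O)U_+(\mathscr O)$. For $k=0$ the statement is trivial since $H_0=G(\mathscr{O})$ and $g^*g_2^*=(gg_2)^*=1$.

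On unipotents, $\theta$ is trivial on $U_+(\mathscr{O})$ and on $U_-(\mathscr{P}^{2k})$ (entries $d=1$, and $(c,1)_F=1$). Thus $u^*=\tilde u_\pm$, and Section~\ref{torus normalize unipotent} gives $\tilde t_k\tilde u_\pm(x)\tilde t_k^{-1}=\tilde u_\pm(t^{\mp 2k}x)$, so $\epsilon=1$ there. On the torus, for $a\in\mathscr{O}^\times$, I would compute directly with \eqref{kubota cocycle formula}: $\tilde t_k^{-1}(\lambda_0(a),1)\tilde t_k$ has cocycle factors of the form $(a,t^{k})_F^{\pm1}(t^{k},a)_F^{\pm1}$ and similar terms. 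These need to combine to $1$, using $(a,b)_F(b,a)_F=1$. This torus computation is the step I expect to be the main obstacle. It is not automatic, since $(a,t)_F$ can be nontrivial, and the relative signs of the two symbols must work out so that they cancel rather than square. If they fail to cancel directly, I would instead rewrite $\lambda_0(a)$ as $w(a)w(-1)$ and use that $\tilde w$ is built from unipotent lifts. Then I would reduce to the unipotent case, checking that the needed lifts $\tilde u_-(-a^{-1})$ etc. lie in $G(\mathscr{O})^*$ up to matching scalars.
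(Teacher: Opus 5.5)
\textbf{The gap is the torus step, and it cannot be closed.} Your reduction matches the paper's: your $\epsilon$ on $H_k$ is the paper's homomorphism $\Psi_k$ on $S_k=G(\mathscr O)\cap t_kG(\mathscr O)t_k^{-1}$, and disjointness for fixed $k$ is equivalent to its triviality. In the step you flagged, the two symbols square rather than cancel.

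Take $a\in\mathscr O^\times$ and $g=\lambda_0(a)$. Then $X(g)=a^{-1}$, $X(t_k)=t^k$ and $X(gt_k)=X(t_kg)=a^{-1}t^k$. So \eqref{kubota cocycle formula} gives $\delta(g,t_k)=(t^k,a^{-1})_F$ and $\delta(t_k,g)=(a^{-1},t^k)_F=(t^k,a^{-1})_F^{-1}$, whence
\[
g^{*}\,\tilde t_k\,(g^{*})^{-1}=\tilde t_k\,\delta(g,t_k)\,\delta(t_k,g)^{-1}=\tilde t_k\,(t^k,a^{-1})_F^{2}=\tilde t_k\,(t,a)_F^{-2k}.
\]
By \eqref{hilbert symbol formula}, $(t,a)_F=\omega(a)^{(q-1)/n}$ runs over all of $\mu_n(F)$. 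Hence $\epsilon(\lambda_0(a))\neq 1$ for suitable $a$ as soon as $n\nmid 2k$, for example $n\geq 3$ and $k=1$. In that case $\tilde t_k\xi\in G(\mathscr O)^*\tilde t_kG(\mathscr O)^*$ with $\xi\neq 1$, so the sets for $\zeta=1$ and $\zeta=\xi$ coincide. The proposition as stated is therefore false for $n\geq 3$; this is the non-commutativity of the metaplectic torus. Your fallback cannot help either. For $k\geq 1$, $w(a)$ and $w(-1)$ have unit lower-left entries, so they are not in $H_k$, and $\epsilon$ is only a character of $H_k$.

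\textbf{What your argument does prove.} You use the correct factorization $H_k=U_-(\mathscr P^{2k})\,T(\mathscr O)\,U_+(\mathscr O)$ and the unipotent computation. Together with the value above, these show that the image of $\epsilon$ is $\{\xi^{2k}:\xi\in\mu_n(F)\}$. Consequently:
\begin{itemize}
\item for fixed $k$, the set $G(\mathscr O)^*\tilde t_k\zeta G(\mathscr O)^*$ depends only on the class of $\zeta$ modulo this subgroup;
\item distinct classes give disjoint sets;
\item the union over $\zeta$ is disjoint exactly when $n\mid 2k$, so the full statement holds for all $k$ only when $n\leq 2$.
\end{itemize}

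\textbf{The paper does not handle this either.} Its Step~3 asserts that $S_k$ is generated by $U_+\cap S_k$ and $U_-\cap S_k$. For $k\geq 1$ this is false: modulo $\mathscr P$ these subgroups land in the upper unipotent subgroup of $\mathrm{SL}_2(\mathscr O/\mathscr P)$, so they cannot generate $T(\mathscr O)$. You were right to include the torus among the generators. Doing so shows the statement needs the restriction $n\mid 2k$, or the coarser indexing of $\zeta$ modulo $2k$-th powers.
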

\begin{proof}The disjointness of the union is implicit in \cite[Theorem 9.2]{principal}, and we adopt a similar approach here.

Any element in the double coset $\widetilde{G}(\mathscr{O}) \tilde{t}_k \widetilde{G}(\mathscr{O})$ can be expressed as $g_1^*\tilde{t}_k\zeta g_2^*$ for some $\zeta\in\mu_n(F)$ and $g_i^*=(g_i,\theta(g_i))\in G(\mathscr{O})^*$. This yields the decomposition
\begin{align*}
    \widetilde{G}(\mathscr{O}) \tilde{t}_k \widetilde{G}(\mathscr{O}) = \bigcup_{\zeta \in \mu_n(F)} G(\mathscr{O})^{*}\tilde{t}_k\zeta G(\mathscr{O})^{*}.
\end{align*}
Consequently, by Remark~\ref{before Cartan}, we obtain
\begin{align*}
    \widetilde{G}(F) = \bigsqcup_{k\geq0 } \left( \bigcup_{\zeta\in\mu_n(F)} G(\mathscr{O})^{*}\tilde{t}_k \zeta G(\mathscr{O})^{*}\right).
\end{align*}
In the following steps, we prove that for a fixed coweight $k$, the union over $\mu_n(F)$ is disjoint.

\paragraph{Step 1:} For a fixed $k$, consider the subgroup
\begin{align}
    S_k := G(\mathscr{O}) \cap t_k G(\mathscr{O})t_k^{-1}.
\end{align}
Every element $g\in S_{k}$ can be written as $g = t_k g_0 t_k^{-1}$ for a unique $g_0\in G(\mathscr{O})$. We define a map
\begin{align}\label{define a new function}
    \Psi_{k} : S_{k} &\longrightarrow \mu_n(F) \\
    g &\longmapsto \zeta, \nonumber
\end{align}
where $\zeta$ is the unique element in $\mu_n(F)$ satisfying the relation $g^*\tilde{t}_k = \tilde{t}_k g_0^* \zeta$.

To see that $\Psi_{k}$ is a group homomorphism, let $g, h \in S_k$ with $\Psi_{k}(g)=\zeta_1$ and $\Psi_{k}(h)=\zeta_2$. A direct calculation shows that
\begin{align*}
    (gh)^*\tilde{t}_k &= g^*h^*\tilde{t}_k = g^*\tilde{t}_k h_0^* \zeta_2 = \tilde{t}_k g_0^* h_0^* \zeta_1 \zeta_2 = \tilde{t}_k (g_0 h_0)^* \zeta_1 \zeta_2.
\end{align*}
This implies $\Psi_{k}(gh) = \zeta_1 \zeta_2$, confirming the homomorphism property.

\paragraph{Step 2:} The homomorphism $\Psi_{k}$ is trivial if and only if the double cosets $G(\mathscr{O})^{*}\tilde{t}_k\zeta G(\mathscr{O})^{*}$ and $G(\mathscr{O})^{*}\tilde{t}_k\zeta' G(\mathscr{O})^{*}$ are disjoint for all pairs of distinct central elements $\zeta \neq \zeta'$ in $\mu_n(F)$.

Suppose these double cosets are mutually disjoint for distinct central elements. For any $g = t_k g_0 t_k^{-1} \in S_k$, the definition in \eqref{define a new function} implies
\begin{align*}
    g^*\tilde{t}_k \in G(\mathscr{O})^{*}\tilde{t}_k G(\mathscr{O})^{*} \cap G(\mathscr{O})^{*}\tilde{t}_k \Psi_k(g) G(\mathscr{O})^{*}.
\end{align*}
The disjointness assumption forces $\Psi_k(g) = 1$, meaning $\Psi_k$ is trivial.

Conversely, assume $\Psi_{k}$ is trivial, and suppose for contradiction that there exist distinct elements $\zeta \neq \zeta'$ such that 
\[
G(\mathscr{O})^{*}\tilde{t}_k\zeta G(\mathscr{O})^{*} \cap G(\mathscr{O})^{*}\tilde{t}_k\zeta' G(\mathscr{O})^{*} \neq \varnothing.
\]
Then there exist elements $g_1, g_2, h_1, h_2 \in G(\mathscr{O})$ satisfying $g_1^*\tilde{t}_k \zeta g_2^* = h_1^*\tilde{t}_k \zeta' h_2^*$, which can be rearranged into
\begin{align*}
    g^*\tilde{t}_k = \tilde{t}_k h^* \zeta'\zeta^{-1},
\end{align*}
for some $g^*, h^* \in G(\mathscr{O})^*$. It follows that $\Psi_{k}(g) = \zeta'\zeta^{-1} \neq 1$, contradicting the triviality of $\Psi_{k}$.

\paragraph{Step 3:} We finally show that the homomorphism $\Psi_{k}$ is identically trivial.

The group $S_k$ is generated by its unipotent intersections, namely $U_+ \cap S_k$ and $U_- \cap S_k$. Therefore, it suffices to check that $\Psi_k$ restricts to the identity on both $U_+ \cap S_k$ and $U_- \cap S_k$. This triviality follows directly from the relations established in Section~\ref{torus normalize unipotent}.
\end{proof}

\section{Hecke Algebra}\label{hecke algebra}

Fix a faithful embedding $\varepsilon:\mu_n(F)\hookrightarrow \mathbb C^{\times}$. We define the genuine Hecke algebra with respect to the congruence subgroup $K_m^*$ for the covering group $\widetilde{G}(F)$ as follows:
\begin{align*}
\mathcal{H}_{\varepsilon} (\widetilde{G}(F),K_m^* )
:=\left\{
f :\widetilde{G}(F) \longrightarrow \mathbb C \,:\,
\begin{aligned}
&\bullet\ \text{$f$ is locally constant and compactly supported},\\
&\bullet\ f(\zeta g)=\varepsilon(\zeta)f(g),
\quad \forall \zeta \in \mu_n(F),\ g\in \widetilde{G}(F),\\
&\bullet\ f(k_1gk_2)=f(g),
\quad \forall k_1,k_2\in K_m^*,\ g \in \widetilde{G}(F).
\end{aligned}
\right\}.
\end{align*}

The multiplication is given by the convolution of functions; that is,
\begin{align*}
    (f * h)(x)=\int_{\gf} f(y) h(y^{-1}x) dy ,         
\end{align*}
where $dy$ is the Haar measure of the group $\gf$, which is normalized such that the measure of $\mu_n(F)\times K_m^*$ is $1$.

\qquad Define
    \begin{align*}
      \widetilde{G}_o := \bigsqcup_{\substack{k \geq0 }}  G(\mathscr O)^{*}\tilde{t}_k G(\mathscr O)^{*}.
    \end{align*}
    The union is disjoint by Proposition~\ref{cartan}. Let $X(m)$ be the discrete set of $K_m^*$ double cosets $K_m^*\backslash \widetilde{G}_o/ K_m^*$. For each $K_m^*g K_m^*\in X(m)$, define the functions 
\begin{equation}\label{basis 2}
    \psi_{K_m^*gK_m^*}(h)=\begin{cases}
        \varepsilon(\zeta), &\text{ if } h\in K_m^*g\zeta K_m^*, \text{ for some }\zeta\in\mu_n(F),\\
        0, & \text{ otherwise}.
    \end{cases}
\end{equation} 

\begin{proposition}\label{basis of hecke algebra}
    The functions $\{\psi_{K_m^*gK_m^*}\}_{K_m^*gK_m^*\in X(m)}$ form a basis for the  $\mathbb C$-vector space $\mathcal{H}_\varepsilon (\widetilde{G}(F),K_m^* )$.
\end{proposition}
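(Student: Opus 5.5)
The argument has three parts: show that each $\psi_{K_m^*gK_m^*}$ lies in $\mathcal{H}_\varepsilon(\widetilde{G}(F),K_m^*)$, show that these functions span, and show that they are linearly independent. The key structural input is that $\mu_n(F)$ acts freely on the set of $K_m^*$ double cosets of $\widetilde{G}(F)$, and that $X(m)$ is a set of orbit representatives for this action.

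\textbf{Setup.} Since $\mu_n(F)$ is central and $K_m^*\cap\mu_n(F)=1$ (recall $K_m^*=\{(k,1)\}$), we have $K_m^*g\zeta K_m^*=\zeta K_m^*gK_m^*$. So $\mu_n(F)$ acts on $K_m^*\backslash\widetilde{G}(F)/K_m^*$. Every double coset meets $\widetilde{G}_o$ after multiplying by some $\zeta$. Indeed, by Proposition~\ref{cartan} every $x\in\widetilde{G}(F)$ lies in some $G(\mathscr O)^*\tilde t_k\zeta G(\mathscr O)^*=\zeta\, G(\mathscr O)^*\tilde t_kG(\mathscr O)^*$, and $K_m^*\subset G(\mathscr O)^*$. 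Moreover $\widetilde{G}_o$ is a union of $K_m^*$ double cosets, again because $K_m^*\subset G(\mathscr O)^*$.

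\textbf{Freeness of the action.} This is the main point. Suppose $K_m^*g\zeta K_m^*=K_m^*g\zeta' K_m^*$ with $g\in\widetilde{G}_o$. Write $g\in G(\mathscr O)^*\tilde t_kG(\mathscr O)^*$. Then $g\zeta\in G(\mathscr O)^*\tilde t_k\zeta G(\mathscr O)^*$ and $g\zeta'\in G(\mathscr O)^*\tilde t_k\zeta' G(\mathscr O)^*$. These two sets would then intersect, so the disjointness in Proposition~\ref{cartan} forces $\zeta=\zeta'$. It follows that:
\begin{itemize}
\item each orbit of $\mu_n(F)$ on $K_m^*\backslash\widetilde{G}(F)/K_m^*$ meets $\widetilde{G}_o$ in exactly one double coset, namely one element of $X(m)$;
\item distinct elements of $X(m)$ lie in distinct orbits.
\end{itemize}
As a consequence, $\psi_{K_m^*gK_m^*}$ is well defined: if $h\in K_m^*g\zeta K_m^*\cap K_m^*g\zeta'K_m^*$, then $\zeta=\zeta'$. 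Its value does not depend on the chosen representative $g$ of the double coset, and its supports for distinct elements of $X(m)$ are disjoint.

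\textbf{Membership in the Hecke algebra.} Each $\psi_{K_m^*gK_m^*}$ is supported on the finite union $\bigcup_\zeta K_m^*g\zeta K_m^*$ of compact open sets. It is constant on each of these sets, so it is locally constant and compactly supported. It is bi-$K_m^*$-invariant by construction. It satisfies $\psi(\zeta' h)=\varepsilon(\zeta')\psi(h)$, because $h\in K_m^*g\zeta K_m^*$ implies $\zeta' h\in K_m^*g\zeta'\zeta K_m^*$.

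\textbf{Spanning.} Let $f\in\mathcal{H}_\varepsilon(\widetilde{G}(F),K_m^*)$. Then $f$ is constant on $K_m^*$ double cosets, and its support is compact, hence a finite union of such double cosets. Genuineness gives $f(g\zeta)=\varepsilon(\zeta)f(g)$. Therefore
\[
f=\sum_{K_m^*gK_m^*\in X(m)} f(g)\,\psi_{K_m^*gK_m^*},
\]
and the sum is finite. To check this identity at a point $h$, write $h\in K_m^*g\zeta K_m^*$ with $g\in\widetilde{G}_o$ and use the orbit description from the freeness step.

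\textbf{Linear independence.} The supports of the $\psi_{K_m^*gK_m^*}$ are pairwise disjoint, and $\psi_{K_m^*gK_m^*}(g)=\varepsilon(1)=1\ne0$. Evaluating a vanishing finite linear combination at each $g$ therefore kills each coefficient.
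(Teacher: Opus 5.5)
Your proof is correct and follows essentially the paper's route: you write the compact, bi-$K_m^*$-invariant, $\mu_n(F)$-stable support of $f$ as finitely many $\mu_n(F)$-translates of double cosets with representatives in $\widetilde{G}_o$, and you get linear independence from disjoint supports. You are more careful than the paper in using the $\zeta$-disjointness of Proposition~\ref{cartan} to show that $\psi_{K_m^*gK_m^*}$ is well defined and that the $\mu_n(F)$-saturated supports are disjoint; note only that your bullet ``each orbit meets $\widetilde{G}_o$ in exactly one double coset'' needs $\widetilde{G}_o\zeta\cap\widetilde{G}_o=\varnothing$ for $\zeta\neq 1$, which is immediate from the disjointness over all pairs $(k,\zeta)$ in Proposition~\ref{cartan} but is not literally the freeness statement you wrote out.
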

\begin{proof}
    Let $f\in \mathcal{H}_{\varepsilon} (\widetilde{G}(F),K_m^* )$. Since $f$ is $K_m^*$-bi-invariant, its support is stable under left and right multiplication by $K_m^*$; that is,
    \begin{align*}
        \text{Supp}(f)=K_m^*\text{Supp}(f)K_m^*=\cup_{x\in \text{Supp}(f)} K_m^*x K_m^*.
    \end{align*}
    Since $\text{Supp}(f)$ is compact, there are $x_1,x_2,\dots,x_n\in \text{Supp}(f)$ such that $\text{Supp}(f)=\cup_{i=1}^{n}K_m^* x_i K_m^*$. We can choose these double cosets $K_m^*x_iK_m^*$ to be distinct. Also, for any $\zeta\in\mu_n(F)$, we have $f(\zeta g)=\varepsilon(\zeta)f(g)$. Hence, if $K_m^*gK_m^*$ belongs to $\text{Supp}(f)$, then $K_m^*gK_m^*\zeta$ also belongs to $\text{Supp}(f)$ for all $\zeta\in \mu_n(F)$. Therefore, $\text{Supp}(f)=\cup_{i=1}^{m}\cup_{\zeta\in\mu_n(F)}K_m^*y_iK_m^* \zeta$, where each $y_i$ belongs to $\widetilde{G}_o $. Thus, 
\begin{align*}
    f=\sum_{i=1}^{m}f(y_i)\psi_{K_m^*y_iK_m^*}.
\end{align*}
Now, since distinct double cosets in $X(m)$ are disjoint, the supports of the functions $\psi_{K_m^*gK_m^*}$ are disjoint. Therefore, the family $\{\psi_{K_m^*gK_m^*}\}_{K_m^*gK_m^*\in X(m)}$ is linearly independent.
\end{proof}

\begin{theorem}\label{finitely presented algebra}
     The Hecke algebra $\mathcal{H}_\varepsilon (\widetilde{G}(F),K_m^* )$  is a finitely presented $\mathbb {C} $-algebra. 
\end{theorem}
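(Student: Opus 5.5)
We will follow the strategy of \cite[\S 2]{BDKV}, replacing each ingredient for reductive groups by its analogue for the covering group $\gf$. The first step is to identify $\mathcal{H}_\varepsilon(\widetilde{G}(F),K_m^*)$ with an endomorphism algebra. Since $K_m^*$ is a compact open subgroup meeting $\mu_n(F)$ trivially, we can form the genuine compactly induced representation
\[
V_m:=\mathrm{c\text{-}Ind}_{\mu_n(F)\times K_m^*}^{\gf}(\varepsilon\otimes 1).
\]
Frobenius reciprocity then gives an anti-isomorphism $\mathcal{H}_\varepsilon(\widetilde{G}(F),K_m^*)\cong \mathrm{End}_{\gf}(V_m)$. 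The representation $V_m$ is a finitely generated projective object of the category $\mathcal{R}_\varepsilon(\gf)$ of smooth genuine representations, and it generates the full subcategory of representations generated by their $K_m^*$-fixed vectors.

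Next, we invoke the Bernstein decomposition for covering groups \cite{KaplanDani}. It says that $\mathcal{R}_\varepsilon(\gf)=\prod_{\mathfrak{s}}\mathcal{R}^{\mathfrak{s}}$, and that only finitely many Bernstein components $\mathfrak{s}$ contain representations with nonzero $K_m^*$-invariants. Hence $V_m=\bigoplus_{\mathfrak{s}\in S_m}V_m^{\mathfrak{s}}$ with $S_m$ finite, and
\[
\mathrm{End}(V_m)=\prod_{\mathfrak{s}\in S_m}\mathrm{End}(V_m^{\mathfrak{s}}).
\]
A finite product of finitely presented algebras is finitely presented, so it suffices to treat a single component. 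For a fixed $\mathfrak{s}=[\tm,\sigma]$, the block $\mathcal{R}^{\mathfrak{s}}$ has a finitely generated projective generator $\Pi^{\mathfrak{s}}=i_{\tp}^{\gf}(\mathrm{c\text{-}Ind}_{\tm^1}^{\tm}\sigma)$, obtained by parabolically inducing the compactly induced cuspidal from the subgroup $\tm^1$. The center $\mathfrak{Z}^{\mathfrak{s}}$ of the block is the ring of invariants of a finitely generated commutative $\mathbb C$-algebra (the coordinate ring of the torus of unramified genuine characters of $\tm$ modulo the finite group $W_{\mathfrak{s}}$). By Hilbert's basis theorem and Noether's finiteness theorem, $\mathfrak{Z}^{\mathfrak{s}}$ is a finitely generated, hence Noetherian, $\mathbb C$-algebra. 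For $\mathrm{SL}_2$ the Levi $\tm$ is either $\gf$ itself or the cover $\widetilde{T}$ of the torus, and both cases can be handled explicitly if needed.

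The key finiteness input, as in \cite{BDKV}, is that $\mathrm{End}(V_m^{\mathfrak{s}})$ is a finitely generated module over $\mathfrak{Z}^{\mathfrak{s}}$. One shows this in two steps. First, $\mathrm{End}(\Pi^{\mathfrak{s}})$ is finite over $\mathfrak{Z}^{\mathfrak{s}}$, which follows from the Bernstein center theory and second adjointness (or from the geometric lemma) for covers, as available in \cite{KaplanDani}. Second, $V_m^{\mathfrak{s}}$ is a direct summand of $(\Pi^{\mathfrak{s}})^{\oplus r}$ for some $r$. Once this is in hand, a standard algebraic lemma finishes the argument. If $A$ is an algebra over a commutative finitely presented $\mathbb C$-algebra $Z$ and $A$ is finite as a $Z$-module, then $A$ is finitely presented over $\mathbb C$. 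Indeed, $A$ is finitely presented as a $Z$-module because $Z$ is Noetherian. Taking generators of $Z$ together with the module generators of $A$, the relations needed are the relations of $Z$, the relations expressing products of module generators as $Z$-combinations, and the centrality relations, and these are finitely many.

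The main obstacle will be justifying, for $\gf$, the finiteness of $\mathrm{End}(V_m^{\mathfrak{s}})$ over the Bernstein center, together with the finiteness of $S_m$. We will deduce both from the corresponding statements in \cite{KaplanDani}: the uniform admissibility and the center theory there are formulated for general finite central covers, and our genuine categories are blocks of those. If those results are only stated for the full category, we will pass to the genuine part by the central idempotent $\frac1n\sum_{\zeta}\varepsilon(\zeta)^{-1}\zeta$, which commutes with everything.
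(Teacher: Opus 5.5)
Your proof is correct and shares the paper's architecture: Bernstein decomposition, finite generation of each block's centre, finiteness of the Hecke algebra over the centre, and then an algebraic lemma. You obtain the third ingredient differently.

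The paper regards $L_c^\infty(\gf/K_m^*)$ as a cyclic representation. It then applies the general $\mathfrak{Z}$-admissibility theorem of \cite[\S 3]{BDKV} (its item (iii)) to conclude that $\mathcal{H}_\varepsilon(\gf,K_m^*)$ is a finitely generated $\mathfrak{Z}$-module, on which $\mathfrak{Z}$ acts through finitely many factors.

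You instead split $\mathrm{End}(V_m)$ block by block; your $V_m$ is essentially the paper's $L_c^\infty(\gf/K_m^*)$. You realize $V_m^{\mathfrak{s}}$ as a summand of $(\Pi^{\mathfrak{s}})^{\oplus r}$. Then $\mathrm{End}(V_m^{\mathfrak{s}})$ is a corner of $M_r(\mathrm{End}(\Pi^{\mathfrak{s}}))$, so its finiteness follows from finiteness of $\mathrm{End}(\Pi^{\mathfrak{s}})$ over $\mathfrak{Z}^{\mathfrak{s}}$.

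This trades the black box (iii) for the progenerator theory of \cite[\S 2]{BDKV}. The paper already imports that same machinery for its item (ii), so you need one transferred theorem for covers rather than two. The cost is that you must state two inputs for $\gf$ explicitly, rather than attributing them loosely to \cite{KaplanDani}:
\begin{enumerate}
\item $\Pi^{\mathfrak{s}}$ is a projective generator of its block. In the linear case this is where second adjointness, or Bernstein's substitute for it, enters.
\item The endomorphism ring of $\Pi^{\mathfrak{s}}$ is finite over $\mathfrak{Z}^{\mathfrak{s}}$.
\end{enumerate}
Your version also actually proves the final algebraic lemma, which the paper only asserts.

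Two minor points. First, finiteness of $S_m$ is not an obstacle. $V_m$ is generated by one vector, so it has only finitely many nonzero Bernstein components. Any block containing some $\pi$ with $\pi^{K_m^*}\neq 0$ receives a nonzero map $V_m\to\pi$, hence is among those components. Second, the unramified characters parametrizing $\mathfrak{Z}^{\mathfrak{s}}$ are trivial on $\mu_n(F)$, not genuine.
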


  It has been remarked in \cite[\S 2]{BDKV} that the proof of the Hecke algebra being finitely presented for connected reductive groups will also work for its central extensions. We fill in the details here. Before providing a sketch of the proof, we introduce some preliminary notations and definitions.

Let $\rg$ denote the category of smooth genuine representations of $\gf$, and let $\mathfrak{Z}$ be the center of the abelian category $\rg$.

\textit{Parabolic subgroups} $\widetilde{P}$ in $\gf$ are precisely the inverse images of parabolic subgroups $P$ in $G$. Let $P=L\ltimes U$ be a parabolic subgroup where $L$ is the Levi subgroup and $U$ is the unipotent radical. Then $U$ splits canonically in $\gf$, yielding 
\begin{align*}
    \widetilde{P}= \widetilde{L} \ltimes U. 
\end{align*}
Let $\delta_P$ be the modulus character of $P$. Then $\delta_{\widetilde{P}} := \delta_P \circ \pi$ is the modulus character of $\widetilde{P}$.

Following the formulation in \cite[Section~1.8]{Bernstein-Zelevinsky} (see also \cite[\S 3]{KaplanDani}), we can define the normalized induction functor $\iota_{\widetilde{P}}^{\gf}$ and the Jacquet functor $r_{\widetilde{P}}^{\gf}$ for the covering group $\gf$. Additionally, these functors satisfy the necessary hypotheses for the geometric lemma \cite[Theorem~5.2]{Bernstein-Zelevinsky}. For a detailed study of these functors on coverings of the symplectic groups, one can refer to the work of Hanzer and Mui\'c \cite{Parabolic}.

Following the standard definition in the linear case \cite[\S IV.2]{renard}, a genuine representation $W$ of $\gf$ is said to be \textit{supercuspidal} if its Jacquet module $W_U$ vanishes for every proper parabolic subgroup $\widetilde{P} = \widetilde{L}U$ of $\gf$.

Let $G(F)^\circ = \{g \in G(F) : \text{val}_F(\chi(g)) = 0, \enspace \forall \chi \in \text{Hom}_F(G, G_m)\}$. Let $\gf^{\circ}$ be the inverse image of $G(F)^\circ$. Then the group $X_{\mathrm{nr}}(\gf)$ of \textit{unramified characters} of $\gf$ is defined by
\[
X_{\mathrm{nr}}(\gf) = \mathrm{Hom}(\gf/\gf^{\circ},\mathbb{C}^{\times}).
\]

As noted in \cite[\S3]{KaplanDani}, Harish-Chandra's characterization theorem for supercuspidal representations \cite[3.20--3.24]{BernsteinZelevinsky1976} also remains valid for $\gf$.

Let $\mathrm{Irr}(\gf)$ denote the set of isomorphism classes of irreducible representations of $\gf$, and let $\mathrm{Irr}(\gf)_{\mathrm{sc}} \subset \mathrm{Irr}(\gf)$ denote the subset consisting of supercuspidal representations.

Let $\widetilde{L}$ be a Levi subgroup of $\gf$ (possibly $\widetilde{L}=\gf$), and let $\widetilde{\rho}\in \mathrm{Irr}(\gf)_{\mathrm{sc}}$. The pair $(\widetilde{L},\widetilde{\rho})$ is called a \textit{supercuspidal pair}.

Now let $(\widetilde{L},\widetilde{\rho})$ and $(\widetilde{M},\widetilde{\tau})$ be supercuspidal pairs. We say that $(\widetilde{L},\widetilde{\rho})$ and $(\widetilde{M},\widetilde{\tau})$ are \textit{conjugate} if
\[
\tm={}^{g}\tl  \quad \text{and} \quad \widetilde{\tau} \cong \chi{\widetilde{\rho}}^{g},
\]
for some $g\in\gf$ and $\chi\in X_{nr}(\gf)$.
This defines an equivalence relation on the set of all supercuspidal pairs. We denote the equivalence class of $(\widetilde{L},\widetilde{\rho})$ by $[\widetilde{L},\widetilde{\rho}]$, and write $\mathfrak{B}(\gf)$ for the set of all equivalence classes of supercuspidal pairs.

\textit{The Bernstein block} $\mathcal{R}(\gf){(\widetilde{L},\widetilde{\rho})}$ is the subcategory of $\mathcal{R}(\gf)$ formed by representations $W$ that embed into a direct sum of representations $\iota_{\widetilde{P}}^{\gf}(W_{\widetilde{P}})$ taken over all parabolic subgroups $\widetilde{P}$ with Levi component $\widetilde{L}$, where each irreducible subquotient of $W_{\widetilde{P}}$ is isomorphic to $\chi\widetilde{\rho}$ for some $\chi\in X_{nr}(\gf)$.

We denote the center of the category $\mathcal{R}(\gf){(\widetilde{L},\widetilde{\rho})}$ by $\mathfrak{Z}{(\widetilde{L},\widetilde{\rho})}$.

Let $(\widetilde{L},\widetilde{\rho})$ be a supercuspidal pair. For each element $w \in N(\widetilde{L})/\widetilde{L}$, choose a representative $n \in N(\widetilde{L})$. We define the conjugate representation $w\cdot\widetilde{\rho}$ of $\widetilde{L}$ by
\[
(w\cdot\widetilde{\rho})(l)=\widetilde{\rho}(n^{-1}ln), \qquad l\in \widetilde{L}.
\]
The relative Weyl group $W(\widetilde{L},\widetilde{\rho})$ associated with the pair is defined as:
\[
W(\widetilde{L},\widetilde{\rho}) = \{\, w\in N(\widetilde{L})/\widetilde{L} \mid w\cdot \widetilde{\rho}=\widetilde{\rho} \,\}.
\]

A representation $V$ of $\gf$ is called \textit{admissible} if, for every open compact subgroup $\widetilde{K}\subset \gf$, the space of $\widetilde{K}$-fixed vectors
$
V^{\widetilde{K}} = \{\,v\in V : kv=v \text{ for all } k\in \widetilde{K}\,\}
$ is finite-dimensional. A representation $V$ of $\gf$ is said to be of finite type if there exists a finite collection of vectors $\{v_i\}_{i\in I}\subset V$ such that the set $\{\, g v_i : g\in \gf,\ i\in I \,\}$ spans $V$ as a complex vector space.

Let $B$ be a $\mathbb{C}$-algebra, and let $V$ be a $B$-module equipped with an action of $\gf$. The representation $V$ is called $B$-admissible if, for every open compact subgroup $\widetilde{K}\subset \gf$, the space $V^{\widetilde{K}}$ of $\widetilde{K}$-fixed vectors is a $B$-module of finite type. The representation $V$ is said to be of $B$-finite type if there exists a finite family of vectors $\{v_i\}_{i\in I}\subset V$ such that $\{\, g v_i : g\in \gf,\ i\in I \,\}$ generates $V$ as a $B$-module.

\subsection{Proof of Theorem~\ref{finitely presented algebra}} The proof relies on the following three results:

\begin{itemize}[leftmargin=4cm]
    \item[\text{ (i)}] (Bernstein decomposition) $\mathcal{R}(\gf)=\underset{(\tl,\widetilde{\rho})\in\mathfrak{B}(\gf)}{\bigoplus}\mathcal{R}(\gf){(\tl,\widetilde{\rho})}.$

    \item[\text{ (ii)}] $\mathfrak{Z}{(\tl,\widetilde{\rho})}\cong \text{ Ring of regular functions on  } \widetilde{\rho}/W(\tl,\widetilde{\rho})$.

     \item[\text{(iii)}] Every representation of $\gf$ of finite type is $\mathfrak{Z}$- admissible. 
\end{itemize}

We briefly explain why these three results imply that the Hecke algebra is finitely presented.

By (i), we obtain the decomposition
\[
\mathfrak{Z}
=
\bigoplus_{(\tl,\widetilde{\rho})\in \mathfrak{B}(\gf)}
\mathfrak{Z}{(\tl,\widetilde{\rho})}.
\]

Let $L_c^\infty(\gf/K_m^*)$ denote the space of compactly supported genuine functions on $\gf$ that are right $K_m^*$-invariant. The group $\gf$ acts on $L_c^\infty(\gf/K_m^*)$ via the regular representation. Moreover, this representation is generated by a single element, namely the function $\psi_{K_m^*}$.

By (iii), the representation $L_c^\infty(\gf/K_m^*)$ is $\mathfrak{Z}$-admissible. Consequently,
\[
L_c^\infty(\gf/K_m^*)^{K_m^*}
=
\mathcal{H}_\varepsilon (\widetilde{G}(F),K_m^* )
\]
is a finitely generated $\mathfrak{Z}$-module. Hence, only finitely many Bernstein blocks $\mathcal{R}(\gf){(\tl,\widetilde{\rho})}$ act nontrivially on $\mathcal{H}_\varepsilon (\widetilde{G}(F),K_m^* )$. Therefore, $\mathfrak{Z}$ acts on $\mathcal{H}_\varepsilon (\widetilde{G}(F),K_m^* )$ via the finite quotient $\prod_{i=1}^n \mathfrak{Z}{(\tl_i,\widetilde{\rho}_i)}.$

By (ii), each algebra $\mathfrak{Z}{(\tl,\widetilde{\rho})}$ is the coordinate ring of an affine variety. In particular, every $\mathfrak{Z}{(\tl,\widetilde{\rho})}$ is a finitely generated $\bc$-algebra. It follows that $\prod_{i=1}^n \mathfrak{Z}{(\tl_i,\widetilde{\rho}_i)}$ is also a finitely generated $\bc$-algebra. Hence, by Hilbert, this algebra is finitely presented over $\bc$.

Since $\mathcal{H}_\varepsilon (\widetilde{G}(F),K_m^* )$ is a finitely generated module over the finitely presented algebra $\prod_{i=1}^n \mathfrak{Z}{(\tl_i,\widetilde{\rho}_i)}$, we conclude that $\mathcal{H}_\varepsilon (\widetilde{G}(F),K_m^* )$ is itself a finitely presented $\bc$-algebra.

\qquad The proof of (i) is established in \cite[Theorem 3.9]{KaplanDani}. For (ii), the proof for reductive groups given in \cite[\S 2]{BDKV} relies on the geometric lemma for induction and Jacquet functors, alongside technical results from \cite[\S 1]{BDKV}. Since Section 1 of \cite{BDKV} holds for arbitrary locally compact topological groups, and the geometric lemma extends to central extensions of reductive groups, the same argument remains valid for central extensions. The proof of (iii) is identical to the reductive group case proved in \cite[\S 3]{BDKV}.

\section{Identification of the Quotients of $\widetilde{G}(\mathscr{O})$ and $\widetilde{G}(\mathscr{O}')$}\label{close local fields}

Let $F'$ be another local non-archimedean field with the ring of integers $\mathscr{O'}$ and the maximal ideal $\mathscr{P'} \subset \mathscr{O'}$. Assume $F'$ is  $m$-close to $F$. We denote by \(\delta'\) the Kubota cocycle associated with \(F'\), and by $S'$ the splitting of \(\widetilde{G}(\mathscr{O}')\) such that $S'(g')=(g',\theta'(g'))$. 

\qquad Following this, we obtain an isomorphism between the residue fields, which further restricts to an isomorphism $\phi$ from $\mu_{q-1}(F)$ to $\mu_{q-1}(F')$. Since $F$ and $F'$ are $m$-close, we have $\mathscr{O}/\mathscr{P}^m\cong\mathscr{O'}/\mathscr{P'}^m$. For the corresponding congruence subgroups $K_m\subset G(\mathscr{O})$ and $K_m'\subset G(\mathscr{O'})$, we have an isomorphism $\Phi_m:G(\mathscr{O})/K_m\rightarrow G(\mathscr{O'})/K_m'$ which maps $(t+\mathscr{P}^m)$ to $t'+\mathscr{P'}^m$, where $t'$ is a uniformizer in $F'$. 

\subsection{}\label{valuation correspondence}Each coset $K_m g$ admits a representative $g = (g_{ij}) \in G(\mathscr{O})$ such that every entry $g_{ij}$ is either $0$ or of the form
\[
g_{ij} = u_{ij} t^k,
\]
with $u_{ij}=\omega(u_{ij})v_{ij} \in \mu_{q-1}(F)\times(1+\mathscr{P})$ and $0\leq k < m$.

Let $g' = (g'_{ij}) \in G(\mathscr{O}')$ that satisfies
\[
\Phi_m:(K_m^* g) = K_m'^* g',
\]
where the entries of $g'$ are defined as follows:
\[
g'_{ij} =
\begin{cases}
0, & \text{if } g_{ij} = 0, \\
u'_{ij} {t'}^k, & \text{if } g_{ij} = u_{ij} t^k,
\end{cases}
\]
and $u'_{ij}= \phi\big(\omega(u_{ij})\big)v_{ij}' \in \mu_{q-1}(F')\times (1+\mathscr{P'})$, where $v_{ij}$ maps to $v'_{ij}$ under the isomorphism $1+\mathscr{P}/1+\mathscr{P}^m\rightarrow 1+\mathscr{P'}/1+\mathscr{P'}^m$.

\qquad That is, $v(g_{ij})=v'(g'_{ij})$ if $g_{ij}\neq 0$ and $v(g_{ij})< m$.

\subsection{}
Let $g=(g_{ij})\in G(F) $ and $g'=(g'_{ij})\in G(F')$. We write $g\sim g'$ if
\[
g'_{ij} =
\begin{cases}
0, & \text{if } g_{ij} = 0, \\
u'_{ij} {t'}^k, & \text{if } g_{ij} = u_{ij} t^k,
\end{cases}
\]
where $u_{ij}\in \mathscr{O^\times}$ and $u'_{ij}\in \mathscr{O'}^\times$ with $\omega'(u'_{ij}) = \phi\big(\omega(u_{ij})\big)$.

\begin{proposition}\label{corro}
    Let $g,h\in G(F)$ and $g',h'\in G(F')$. If $g\sim g'$, $h\sim h'$ and $gh\sim g'h'$, then 
    \begin{align*}
        \delta'(g',h')=\phi(\delta(g,h)).
    \end{align*}
\end{proposition}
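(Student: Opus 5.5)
The plan is to reduce everything to a compatibility statement for the Hilbert symbols, using the explicit formula \eqref{hilbert symbol formula}, and then feed it into the Kubota formula \eqref{kubota cocycle formula}.

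\textbf{Step 1: compatibility of $X$.} First I check that $X$ is compatible with $\sim$. Since $g\sim g'$, an entry $g_{ij}$ vanishes if and only if $g'_{ij}$ vanishes. So the case distinction $c\neq 0$ versus $c=0$ in the definition of $X$ is the same for $g$ and $g'$. The same holds for $h,h'$ and for $gh,g'h'$, the last because $gh\sim g'h'$ is assumed. Hence each of $X(g),X(h),X(gh)$ corresponds under $\sim$ to $X(g'),X(h'),X(g'h')$ respectively: they are nonzero elements $u t^k$ and $u' t'^k$ with the same exponent $k$ and with $\omega'(u')=\phi(\omega(u))$. Both $\sim$-compatibility conditions (same valuation, $\phi$-compatible $\omega$-parts) are multiplicative. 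Therefore the quotients $a=X(gh)/X(g)$ and $b=X(gh)/X(h)$ correspond in the same way to $a'=X(g'h')/X(g')$ and $b'=X(g'h')/X(h')$. That is, $v(a)=v'(a')$, $v(b)=v'(b')$, and the unit parts have $\phi$-related $\omega$-components.

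\textbf{Step 2: transport of Hilbert symbols.} Next I prove the lemma $(a',b')_{F'}=\phi((a,b)_F)$ whenever $a\leftrightarrow a'$ and $b\leftrightarrow b'$ in this sense. Write $a=u t^{\alpha}$ and $b=w t^{\beta}$. Then
\[
(-1)^{\alpha\beta}\,b^{\alpha}a^{-\beta}=(-1)^{\alpha\beta}\,w^{\alpha}u^{-\beta},
\]
since the powers of $t$ cancel. This quantity is a unit, and its $\omega$-part is $\omega(-1)^{\alpha\beta}\,\omega(w)^{\alpha}\,\omega(u)^{-\beta}$. The map $\omega$ is a homomorphism $\mathscr O^\times\to\mu_{q-1}(F)$. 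The residue-field isomorphism induces $\phi$, which is a group isomorphism, sends $-1$ to $-1$, and commutes with raising to the power $(q-1)/n$. Moreover, $q=q'$ because the fields are $m$-close with $m\ge 1$. Applying \eqref{hilbert symbol formula} on both sides therefore gives the lemma. The coefficient field $\mu_n(F)\subset\mu_{q-1}(F)$ is carried by $\phi$ onto $\mu_n(F')$, so $\phi(\delta(g,h))$ makes sense as an element of $\mu_n(F')$.

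\textbf{Step 3: conclusion.} Combining Step 1 with the lemma of Step 2 and the formula \eqref{kubota cocycle formula} yields $\delta'(g',h')=(a',b')_{F'}=\phi((a,b)_F)=\phi(\delta(g,h))$.

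\textbf{Main obstacle.} The only delicate point is Step 1, specifically making sure the relation $\sim$ is used exactly as defined: entries that are zero correspond to zero, and nonzero entries carry the same valuation and $\phi$-related Teichmüller parts. Importantly, no closeness beyond the residue-field level is needed, because the Hilbert symbol depends only on valuations and $\omega$-parts when $(n,p)=1$. One must also check that the case $c\neq0$ in the definition of $X$ matches for the products. This is exactly why the hypothesis $gh\sim g'h'$ is imposed rather than derived.
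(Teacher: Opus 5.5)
Your proof is correct and follows essentially the same route as the paper: reduce through the Kubota formula to Hilbert symbols, then use the tame formula \eqref{hilbert symbol formula} to see that these depend only on valuations and $\omega$-parts, which $\phi$ transports. Your Step~1 spells out what the paper leaves implicit, namely that matching zero patterns make $X(g),X(h),X(gh)$ correspond to $X(g'),X(h'),X(g'h')$, and that the correspondence is preserved under quotients.
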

\qquad The definition of the Kubota cocycle and the splitting is expressed in terms of the Hilbert symbol. Therefore, the cocycle $\delta$ depends only on the valuations of the entries and on the Teichmüller lifts of the units. In particular, we have
\begin{align*}
   (u'_{ij}{t'}^{r},u'_{kl}{t'}^s)_{F'}
   &=\omega'[(-1)^{rs} (u'_{kl})^{r}(u'_{ij})^{-s}]^{\frac{q-1}{n}}\\
   &=\phi\!\left(\omega[(-1)^{rs} (u_{kl})^{r}(u_{ij})^{-s}]^{\frac{q-1}{n}}\right)\\
   &=\phi\big((u_{ij}t^{r},u_{kl}t^{s})_{F}\big).
\end{align*}

\begin{remark}\label{corro 2}
    Similarly, as above, if $g\sim g'$ with $g \in G(\mathscr{O})$ and $g'\in G(\mathscr{O'})$, then $\theta(g)=\theta'(g')$.
\end{remark}

\begin{proposition}\label{bij}
    For $m\geq 1$, the following map 
    \begin{align*}
      p: G(\mathscr{O})^*/ K_m^* & \longrightarrow G(\mathscr{O}) / K_m\\
        K_m^*g^* & \longmapsto K_mg
    \end{align*}
        
    is a group isomorphism.
\end{proposition}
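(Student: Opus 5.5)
The natural approach is to deduce the statement directly from the fact that the splitting $S:G(\mathscr{O})\to\widetilde{G}(F)$ of Section~\ref{kubota} is an injective group homomorphism. We also use the observation, made there, that $K_m^*=S(K_m)=\{(k,1):k\in K_m\}$ for $m\ge 1$.

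First, $S$ is a homomorphism because $\theta(gh)=\theta(g)\theta(h)\delta(g,h)$. It is injective because its composition with the projection $\widetilde{G}(F)\to G(F)$ is the identity. Hence $S$ restricts to a group isomorphism $G(\mathscr{O})\xrightarrow{\sim} G(\mathscr{O})^*$, with inverse given by the projection $(g,\zeta)\mapsto g$ restricted to $G(\mathscr{O})^*$. This isomorphism carries the normal subgroup $K_m$ onto $K_m^*$. Therefore $K_m^*$ is normal in $G(\mathscr{O})^*$, and the quotient $G(\mathscr{O})^*/K_m^*$ is a group.

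Next, consider the composite
\[
G(\mathscr{O})^*\xrightarrow{\ \mathrm{pr}\ } G(\mathscr{O})\to G(\mathscr{O})/K_m .
\]
It is a surjective homomorphism, since $\mathrm{pr}(g^*)=g$ for every $g\in G(\mathscr{O})$. Its kernel is $\{g^*: g\in K_m\}=S(K_m)=K_m^*$. By the first isomorphism theorem it induces an isomorphism $G(\mathscr{O})^*/K_m^*\to G(\mathscr{O})/K_m$, and this is exactly the map $p:K_m^*g^*\mapsto K_mg$. In particular $p$ is well defined: if $K_m^*g^*=K_m^*h^*$, then $h^*=k^*g^*=(kg)^*$ for some $k\in K_m$, so $h=kg$ and $K_mh=K_mg$.

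There is no real obstacle. The only point requiring care is that $\ker p$ is precisely $K_m^*$ rather than some larger subgroup such as $K_m^*\mu_n(F)$. This holds because $G(\mathscr{O})^*$ meets $\mu_n(F)$ trivially, which is immediate from the injectivity of $S$. The hypothesis $m\ge1$ enters only through the description $K_m^*=\{(k,1):k\in K_m\}$, which is convenient but not logically needed for the isomorphism.
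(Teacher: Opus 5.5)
Your proof is correct and is essentially the paper's argument. Both rest on $S$ restricting to an isomorphism $G(\mathscr{O})\to G(\mathscr{O})^*$ that carries $K_m$ onto $K_m^*$. The paper checks well-definedness and the homomorphism property by hand and writes down the inverse $q(K_m g)=K_m^*g^*$, using $\theta(K_m)=1$; you package the same content through the first isomorphism theorem, and you also justify the normality of $K_m^*$ in $G(\mathscr{O})^*$, which the paper uses implicitly here.
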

\begin{proof}
Suppose ${K_m^*}g^*=K_m^*h^*$. Then, $(g,\theta(g))(h^{-1},\theta(h^{-1}))=(gh^{-1},\theta(gh^{-1}))$ belongs to $K_m^*$. Therefore, $gh^{-1}\in K_m$, which implies $K_mg=K_mh$. Hence, the map $p$ is well defined.

\qquad $p(K_m^* g^*\cdot K_m^*h^*)=p(K_m^* (gh)^*)=K_m(gh)=K_mg\cdot K_mh$, which shows that $p$ is a group homomorphism. Consider the map
\[
q : G(\mathscr{O}) / K_m \longrightarrow G(\mathscr{O})^{*} / K_m^{*},
\]
defined by
\[
q(K_m g) = K_m^{*} g^{*}.
\]
If $K_mg=K_mh$, then $gh^{-1}\in K_m $. From Section~\ref{kubota}, we have $\theta(gh^{-1})=1$. Therefore, $(gh^{-1},\theta(gh^{-1}))\in K_m^*$, which implies $K_m^*g^*=K_m^*h^*$. Also, it is clear that the map $q$ is also a group homomorphism.

The proposition now follows from the fact that \(p \circ q\) and \(q \circ p\) are identity maps on their respective domains.
\end{proof}

\begin{corollary}\label{closeness1}
    The map $\Phi_m: G(\mathscr{O})/K_m \rightarrow G(\mathscr{O}')/K_m'$, together with the maps $p,q$ from Proposition~\ref{bij}, induces an isomorphism, also denoted as $\Phi_m$:
\[
\Phi_m : G(\mathscr{O})^{*}/K_m^{*} \to G(\mathscr{O}')^{*}/K_m'^{*}.
\]

\end{corollary}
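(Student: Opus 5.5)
The isomorphism will be defined as the composite
\[
\Phi_m := q' \circ \Phi_m \circ p : G(\mathscr{O})^{*}/K_m^{*} \longrightarrow G(\mathscr{O}')^{*}/K_m'^{*},
\]
where $p$ is the isomorphism of Proposition~\ref{bij} for $F$, $q'$ is the inverse isomorphism of Proposition~\ref{bij} for $F'$, and $\Phi_m: G(\mathscr{O})/K_m\to G(\mathscr{O}')/K_m'$ is the group isomorphism coming from $\mathscr{O}/\mathscr{P}^m\cong \mathscr{O}'/\mathscr{P}'^m$. Explicitly, the composite is $K_m^*g^*\mapsto K_m'^*g'^*$, where $g'\in G(\mathscr{O}')$ is any element with $\Phi_m(K_mg)=K_m'g'$.

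Well-definedness is already contained in the three ingredients. The map $p$ is well defined by Proposition~\ref{bij}, and $\Phi_m$ is well defined on cosets. The map $q'$ is well defined because $\theta'$ is trivial on $K_m'$ for $m\ge 1$ (Section~\ref{kubota}, applied to $F'$). So the image $K_m'^*g'^*$ does not depend on the choice of $g'$ within its $K_m'$-coset.

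Each of the three maps is a group isomorphism: $p$ and $q'$ by Proposition~\ref{bij}, and $\Phi_m$ because it is induced by a ring isomorphism of the finite quotients. Hence the composite is a group isomorphism, with inverse $q\circ\Phi_m^{-1}\circ p'$.

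The only delicate point is that $G(\mathscr{O})^*/K_m^*$ is a genuine group, i.e.\ that $K_m^*$ is normal in $G(\mathscr{O})^*$. This holds because $S$ is a homomorphism and $K_m$ is normal in $G(\mathscr{O})$. Apart from this there is essentially no obstacle; the corollary is a formal consequence of Proposition~\ref{bij}.

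It is also worth recording a compatibility that will be needed later. Choose representatives $g\sim g'$ as in Section~\ref{valuation correspondence}. Then Remark~\ref{corro 2} gives $\theta(g)=\theta'(g')$, up to the identification $\phi$ of roots of unity. Hence the composite sends $(g,\theta(g))K_m^*$ to $(g',\phi(\theta(g)))K_m'^*$, which is compatible with the identification of central characters $\varepsilon'\circ\phi=\varepsilon$.
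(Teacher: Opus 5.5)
Your proposal is correct and follows the paper's proof: the paper also defines the map as the composite $q'\circ\Phi_m\circ p$ of the isomorphisms from Proposition~\ref{bij} and the residue-ring isomorphism, and concludes because each factor is a group isomorphism. Your additional remarks are accurate and only make explicit what the paper leaves implicit, namely well-definedness, the normality of $K_m^*$ in $G(\mathscr{O})^*$ as the image of a normal subgroup under the homomorphism $S$, and the compatibility $\theta'(g')=\phi(\theta(g))$ for $g\sim g'$.
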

\begin{proof}
 We consider the following sequence of maps
\begin{align*}
    G(\mathscr{O})^*/ K_m^* \overset{p} \longrightarrow G(\mathscr{O}) / K_m \overset{\Phi_m}\longrightarrow G(\mathscr{O'})/ K_m' \overset{q'} \longrightarrow G(\mathscr{O'})^* / K_m'^*,
\end{align*} where $p(K_m^*g^*)=K_mg$ and $q'(K_m'g')=K_m'^*{g'}^*$ are given above. Each of these maps is a group isomorphism, which gives the isomorphism $\Phi_m$ from  $G(\mathscr{O})^*/ K_m^*$ to $G(\mathscr{O'})^* / K_m'^*$.
\end{proof}

The congruence subgroup $K_m^*$ is normal in $\go$ for $m\geq 1$. To see this, we start with the elements $(g,a)\in \go$ and $(k,1)\in K_m^*$. Since $K_m\trianglelefteq G(\mathscr{O})$, $gkg^{-1}\in K_m$. 
\begin{align*}
    (g,a)(k,1)(g,a)^{-1}=(g,a)(k,1)(g^{-1},a^{-1}\delta(g,g^{-1})^{-1})
    = (gkg^{-1},\delta(g,k)\delta(gk,g^{-1})\delta(g,g^{-1})^{-1}).
\end{align*}

Recall from Section~\ref{kubota} that the map $\theta$ satisfies the relation $\theta(gh) = \theta(g)\theta(h)\delta(g,h)$ for all $g,h \in G(\mathscr{O})$, and restricts to $\theta(k)=1$ for all $k \in K_m$. Expanding the cocycle terms using these properties yields
\begin{align*}
\delta(g,k)\delta(gk,g^{-1})\delta(g,g^{-1})^{-1} &= \big(\theta(gk)\theta(g)^{-1}\theta(k)^{-1}\big) \big(\theta(gkg^{-1})\theta(gk)^{-1}\theta(g^{-1})^{-1}\big) \big(\theta(gg^{-1})\theta(g)^{-1}\theta(g^{-1})^{-1}\big)^{-1} \\
&= \theta(gkg^{-1}) \theta(k)^{-1} \theta(gg^{-1})^{-1} \\
&= \theta(gkg^{-1}) = 1,
\end{align*}
where the final equality holds because $gkg^{-1} \in K_m$. Hence, we have
\[
(g,a)(k,1)(g,a)^{-1} = (gkg^{-1}, 1) \in K_m^*,
\]
which establishes that $K_m^*$ is a normal subgroup of $\widetilde{G}(\mathscr{O})$ for all $m\geq 1$.

\begin{corollary} \label{closeness2}
    The isomorphism $\Phi_m$ in Corollary~\ref{closeness1} induces an isomorphism from $\widetilde{G}(\mathscr{O})/K_m^*$ to $\widetilde{G}(\mathscr{O'})/K_m'^*$.
    \end{corollary}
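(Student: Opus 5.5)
We extend $\Phi_m$ from $G(\mathscr{O})^*/K_m^*$ to $\widetilde{G}(\mathscr{O})/K_m^*$ by using the direct product structure of $\widetilde{G}(\mathscr{O})$. Since $S$ is a splitting and $\mu_n(F)$ is central, the map
\[
G(\mathscr{O})\times\mu_n(F)\longrightarrow \widetilde{G}(\mathscr{O}),\qquad (g,\zeta)\longmapsto g^*\zeta=(g,\theta(g)\zeta),
\]
is a group isomorphism. It is bijective because $(g,a)=g^*\cdot\theta(g)^{-1}a$ uniquely.

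Under this isomorphism $K_m^*$ corresponds to $K_m\times\{1\}$, because $\theta(k)=1$ for $k\in K_m$ (Section~\ref{kubota}). Hence
\[
\widetilde{G}(\mathscr{O})/K_m^*\;\cong\; G(\mathscr{O})^*/K_m^*\times\mu_n(F),
\]
where the second factor embeds as $K_m^*\zeta$. One checks that $K_m^*\cap\mu_n(F)$ is trivial and that $\mu_n(F)$ is central. The same decomposition holds over $F'$.

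We define
\[
\widetilde{\Phi}_m(K_m^*g^*\zeta)=\Phi_m(K_m^*g^*)\,\phi(\zeta),
\]
with $\Phi_m$ from Corollary~\ref{closeness1}. Here $\phi$ restricted to $\mu_n(F)\subset\mu_{q-1}(F)$ maps onto $\mu_n(F')$. This uses $n\mid q-1$, which holds because $\mu_n(F)\subset F$ and $(n,p)=1$, and the fact that $\phi$ is a group isomorphism $\mu_{q-1}(F)\to\mu_{q-1}(F')$ coming from the residue field identification.

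\textbf{Well-definedness.} The expression $K_m^*g^*\zeta$ is unique in $G(\mathscr{O})^*/K_m^*\times\mu_n(F)$. Indeed, $K_m^*g^*\zeta=K_m^*h^*\zeta'$ forces $gh^{-1}\in K_m$. Then $g^*=k^*h^*$ for some $k\in K_m$, which gives $\zeta=\zeta'$.

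\textbf{Homomorphism property.} By centrality of $\mu_n$,
\[
(K_m^*g^*\zeta)(K_m^*h^*\eta)=K_m^*(gh)^*\zeta\eta,
\]
using normality of $K_m^*$ established above. So $\widetilde{\Phi}_m$ is a product of the isomorphisms $\Phi_m$ and $\phi|_{\mu_n}$, and therefore an isomorphism. Its inverse is built symmetrically from $\Phi_m^{-1}$ and $\phi^{-1}$.

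The only point needing care, and the main obstacle, is that the identification be compatible with the group law. This rests on the splitting $S$ being a homomorphism on all of $G(\mathscr{O})$, so that no cocycle term $\delta$ appears when multiplying $g^*$ and $h^*$. It also needs $\Phi_m$ to be a homomorphism on $G(\mathscr{O})^*/K_m^*$, which is Corollary~\ref{closeness1}.

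If one prefers a statement in coordinates $(g,a)$, one needs $\theta'(g')=\phi(\theta(g))$ for $g\sim g'$. This is supplied by Remark~\ref{corro 2}, read with $\phi$ applied. With it, the map is
\[
K_m^*(g,a)\longmapsto K_m'^*(g',\phi(a)),
\]
which agrees with $\widetilde{\Phi}_m$ by the previous paragraph.
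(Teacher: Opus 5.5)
Your proposal is correct and follows the same route as the paper. Both use the splitting to write $\widetilde{G}(\mathscr{O})=\bigsqcup_{\zeta\in\mu_n(F)}G(\mathscr{O})^*\zeta$ and define $K_m^*g^*\zeta\mapsto \Phi_m(K_m^*g^*)\,\phi(\zeta)$, i.e.\ $\Phi_m$ from Corollary~\ref{closeness1} on the $G(\mathscr{O})^*/K_m^*$ factor and $\phi$ on the central factor. The paper only asserts the conclusion from this decomposition; you also check what it leaves implicit: uniqueness of the decomposition via $K_m^*\cap\mu_n(F)=1$, compatibility with the group law, and that $\phi$ carries $\mu_n(F)$ onto $\mu_n(F')$ because $n\mid q-1$.
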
 
   \begin{proof}
     Recall that $\widetilde{G}(\mathscr{O}) = \{ (g,a) : g \in G(\mathscr{O}), \, a \in \mu_n(F) \}$. Because $G(\mathscr{O})^*$ is the image of $G(\mathscr{O})$ under the homomorphism $S$, we obtain the coset decomposition $\widetilde{G}(\mathscr{O}) = \bigsqcup_{\zeta \in \mu_n(F)} G(\mathscr{O})^* \zeta$. Using this decomposition, we define the following map:
\begin{align*}
\Phi_m : \widetilde{G}(\mathscr{O})/K_m^* &\longrightarrow \widetilde{G}(\mathscr{O}')/K_m'^* \\
K_m^* (g,\theta(g)) \zeta &\longmapsto K_m'^* (g',\theta'(g')) \phi(\zeta),
\end{align*}
where the element $K_m^*(g,\theta(g))$ maps to $K_m'^*({g'},\theta'(g'))$ under the induced map $\Phi_m : G(\mathscr{O})^*/K_m^* \rightarrow G(\mathscr{O}')^*/K_m'^*$ from Corollary~\ref{closeness1}, and $\phi : \mu_{q-1}(F) \to \mu_{q-1}(F')$ is the isomorphism. The corollary then follows immediately from Corollary~\ref{closeness1} together with the fact that $\phi$ is an isomorphism.
   \end{proof}

\begin{remark}\label{not well defined Kubota}
For $m\geq 1$, the Kubota cocycle does not satisfy the property
\[
\delta(K_m g K_m,\, K_m h K_m) = \delta(g,h)
\]
for all $g,h \in \widetilde{G}(\mathscr{O})$.
\end{remark}
\begin{proof}
    It suffices to prove that there exist $g\in \widetilde{G}(\mathscr{O})$ and $k\in K_m$ such that
\[
\delta(g^{-1},gk)\neq\delta(g^{-1},g).
\]

Suppose for some $g \in \widetilde{G}(\mathscr{O}) $ and $k\in K_m$, we have that $\delta(g^{-1},gk)=\delta(g^{-1},g)$. Then, from the $2$ -cocycle relation,
\[
\delta(g^{-1},g)\delta(1,k)=\delta(g,k)\delta(g^{-1},gk),
\]
we obtain that 
\begin{align*}
    \delta(g,k)=\delta(1,k)=1.
\end{align*}

Therefore, it is enough to show that there exist elements $g\in G(\mathscr{O})$ and $k\in K_m$ such that $\delta(g,k)\neq 1$.
Now consider the elements
\begin{align*}
    g=
\begin{pmatrix}
u & 0\\
0 & u^{-1}
\end{pmatrix}
\, \quad \text{ and }\quad k=
\begin{pmatrix}
1+t^m x & t^m y\\
t^m w & 1+t^m z
\end{pmatrix}, 
\end{align*}
with $u\in \mathscr{O}^{\times}$, $x,y,z,w\in \mathscr{O}$, and $w\neq 0.$

For these elements, we have
\[
X(g)=u^{-1}\quad \text{and}\quad X(k)=t^m w.
\]

Multiplying $g$ and $k$ gives
\[
gk=
\begin{pmatrix}
* & *\\
u^{-1}t^m w & *
\end{pmatrix}.
\]
Therefore, $X(gk)=u^{-1}t^m w.$ Hence,
\[
\delta(g,k)
=
\left(
\frac{u^{-1}t^m w}{u^{-1}},
\frac{u^{-1}t^m w}{t^m w}
\right)_F
=
(t^m w, u^{-1})_F .
\]

Write $w=t^r v$ with $v\in \mathscr{O}^{\times}$. Then
\begin{align*}
\delta(g,k)
&=
(t^{m+r}v,u^{-1})_F 
=
(t,u^{-1})_F^{\,m+r} =
\omega(u^{-1})^{\frac{(q-1)(m+r)}{n}} .
\end{align*}

Finally, we can choose $r,u$ in such a way that $\omega(u^{-1})^{\frac{(q-1)(m+r)}{n}}\neq 1$, which yields the desired result.
\end{proof}

\section{Linear Isomorphism of Hecke Algebras over two Close Local Fields }

 For any $k \in \bz_{\ge 0}$ and for the local field $F$ with the ring of integers $\mathscr{O}$, we introduce the following notation: 
\begin{enumerate}

    \item $\widetilde{G}_{k}:=\widetilde{G}(\mathscr O)\tilde{t}_k \widetilde{G}(\mathscr O)$.
    \item $X_{k}(m):=$ discrete set of $K_m^*$ double cosets of $\widetilde{G}_k$.
\end{enumerate}
\qquad 
The group $\widetilde{G}(\mathscr{O}) \times \widetilde{G}(\mathscr{O})$ acts transitively on $\widetilde{G}_{k}$ via
\[
(g_1,g_2)\cdot h \mapsto g_1 h g_2^{-1}.
\]
Consequently, the group $\widetilde{G}(\mathscr{O})/K_m^* \times \widetilde{G}(\mathscr{O})/K_m^*$ acts transitively on $X_{k}(m)$ by
\[
(K_m^*g_1,K_m^*g_2)\cdot K_m^*hK_m^* \mapsto K_m^*g_1 h g_2^{-1}K_m^* .
\]
We denote by $\Gamma_k(m)$ the stabilizer of $K_m^*\tilde{t}_kK_m^*$ under this action.

Similarly, for the local field $F'$ which is $m$-close to $F$, we will use the notations $K_m',\widetilde{G}_{k}',X_{k}'(m), \Gamma_k'(m)$.
\begin{lemma}
The stabilizer of $K_m^*\tilde{t}_kK_m^*$ is 
\begin{align*}
    \Gamma_k(m)=\Big\{(K_m^*g,K_m^*\tilde{t}_k^{-1} g \tilde{t}_k): g\in \widetilde{G}(\mathscr{O})\cap\tilde{t}_k\widetilde{G}(\mathscr{O})\tilde{t}_k^{-1}\Big/K_m^*\cap\tilde{t}_kK_m^*\tilde{t}_k^{-1}\Big\}.
\end{align*}
\end{lemma}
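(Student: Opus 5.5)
We prove the two inclusions by unwinding what it means for a pair to fix the base point of $X_k(m)$; we first check that the right-hand side is well defined. Since $K_m^*$ is normal in $\go$ (shown just before Corollary~\ref{closeness2}), the quotients $\go/K_m^*$ are groups and the action on $X_k(m)$ is well defined. Put
\[
H_k:=\go\cap\tilde{t}_k\go\tilde{t}_k^{-1}.
\]
For $g\in H_k$ the element $\tilde{t}_k^{-1}g\tilde{t}_k$ lies in $\go$. The map $g\mapsto (K_m^*g,\,K_m^*\tilde{t}_k^{-1}g\tilde{t}_k)$ is a homomorphism $H_k\to \go/K_m^*\times\go/K_m^*$. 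Its kernel contains $K_m^*\cap\tilde{t}_kK_m^*\tilde{t}_k^{-1}$: if $g$ lies there, then $g\in K_m^*$ and $\tilde{t}_k^{-1}g\tilde{t}_k\in K_m^*$. So the set in the statement is the image of $H_k/(K_m^*\cap\tilde{t}_kK_m^*\tilde{t}_k^{-1})$, and this is the sense in which we read the statement.

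For the inclusion $\supseteq$, take $g\in H_k$ and act on the base point:
\[
(K_m^*g,\,K_m^*\tilde{t}_k^{-1}g\tilde{t}_k)\cdot K_m^*\tilde{t}_kK_m^*=K_m^*g\,\tilde{t}_k\,(\tilde{t}_k^{-1}g^{-1}\tilde{t}_k)K_m^*=K_m^*\tilde{t}_kK_m^*.
\]
Hence every such pair lies in $\Gamma_k(m)$.

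For the inclusion $\subseteq$, suppose $(K_m^*g_1,K_m^*g_2)$ stabilizes $K_m^*\tilde{t}_kK_m^*$ with $g_1,g_2\in\go$. Then $g_1\tilde{t}_kg_2^{-1}\in K_m^*\tilde{t}_kK_m^*$, so $g_1\tilde{t}_kg_2^{-1}=k_1\tilde{t}_kk_2$ with $k_1,k_2\in K_m^*$. Replace $g_1$ by $g:=k_1^{-1}g_1$, which does not change the coset $K_m^*g_1$. Then
\[
g\,\tilde{t}_k=\tilde{t}_k\,k_2g_2,\qquad\text{so}\qquad \tilde{t}_k^{-1}g\tilde{t}_k=k_2g_2\in\go .
\]
Thus $g\in H_k$. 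Moreover $K_m^*\tilde{t}_k^{-1}g\tilde{t}_k=K_m^*k_2g_2=K_m^*g_2$. Hence the pair equals $(K_m^*g,\,K_m^*\tilde{t}_k^{-1}g\tilde{t}_k)$ with $g\in H_k$, as required.

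No step is a real obstacle. The only care needed is bookkeeping: use normality of $K_m^*$ so that cosets can be moved freely between sides, and note that the central factor $\mu_n(F)$ causes no trouble because $\tilde{t}_k$ normalizes (indeed centralizes) it. The argument never uses the Kubota cocycle, since all manipulations take place inside the group $\gf$.
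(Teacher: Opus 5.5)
Your proof is correct and follows the paper's argument essentially step for step. The inclusion $\supseteq$ is a direct computation, and for $\subseteq$ you write $g_1\tilde{t}_kg_2^{-1}=k_1\tilde{t}_kk_2$ and replace $g_1$ by $k_1^{-1}g_1$, which lands in $\widetilde{G}(\mathscr{O})\cap\tilde{t}_k\widetilde{G}(\mathscr{O})\tilde{t}_k^{-1}$; your extra check that the parametrization factors through $K_m^*\cap\tilde{t}_kK_m^*\tilde{t}_k^{-1}$ is a small point the paper leaves implicit.
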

\begin{proof}
   If $h \in \widetilde{G}(\mathscr{O}) \cap \tilde{t}_k \widetilde{G}(\mathscr{O}) \tilde{t}_k^{-1}$, then 
$\tilde{t}_k^{-1} h \tilde{t}_k \in \widetilde{G}(\mathscr{O})$. Hence, it is immediate that
\[
(K_m^* h,\, K_m^* \tilde{t}_k^{-1} h \tilde{t}_k) \in \Gamma_k(m).
\]

Conversely, suppose $(K_m^* g, K_m^* h) \in \Gamma_k(m)$. Then

\[
K_m^* g \tilde{t}_k h^{-1} K_m^* = K_m^* \tilde{t}_k K_m^*. 
\]
Therefore, there exist $k_1, k_2 \in K_m^*$ such that
\[
g \tilde{t}_k h^{-1} = k_1 \tilde{t}_k k_2 .
\]
Equivalently,
\[
x := k_1^{-1} g = \tilde{t}_k k_2 h \tilde{t}_k^{-1} \in 
\widetilde{G}(\mathscr{O}) \cap \tilde{t}_k \widetilde{G}(\mathscr{O}) \tilde{t}_k^{-1}.
\]
It follows that
\[
K_m^* g = K_m^* x
\quad \text{and} \quad
K_m^* h = K_m^* \tilde{t}_k^{-1} x \tilde{t}_k ,
\]
where $x \in \widetilde{G}(\mathscr{O}) \cap \tilde{t}_k \widetilde{G}(\mathscr{O}) \tilde{t}_k^{-1}$.

\end{proof}

\begin{lemma}\label{stabilizer}
 Consider the isomorphism $\Phi_m:\widetilde{G}(\mathscr{O})/ K_m^*$ to $\widetilde{G}(\mathscr{O'})/ K_m'^*$ from Corollary~\ref{closeness2}. Suppose the map $\Phi_m$ sends $K_m^*g$ to $K_m'^*g'$. Then the following map:
    \begin{align*}
        \chi:\Gamma_k(m)&\longrightarrow {\Gamma}_{k}' (m)  \\
        (K_m^*g,K_m^*\tilde{t}_k^{-1} g \tilde{t}_k)&\longmapsto(K_m'^*g',K_m'^*(\tilde{t}_k^{-1} g \tilde{t}_k)')
        \end{align*}
    is a group isomorphism.
\end{lemma}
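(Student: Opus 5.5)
The plan is to reduce everything to the linear group $G(\mathscr{O})$, using the triviality of $\Psi_k$ from the proof of Proposition~\ref{cartan}, and then transport through $\Phi_m$. Since $\widetilde{G}(\mathscr{O})=\bigsqcup_{\zeta}G(\mathscr{O})^*\zeta$ and $\mu_n(F)$ is central, every $g\in \widetilde{G}(\mathscr{O})\cap\tilde{t}_k\widetilde{G}(\mathscr{O})\tilde{t}_k^{-1}$ can be written as $g=s^*\zeta$ with $s\in S_k=G(\mathscr{O})\cap t_kG(\mathscr{O})t_k^{-1}$.

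\textbf{Step 1: explicit form of the second coordinate.} Write $s=t_ks_0t_k^{-1}$. Step~3 of the proof of Proposition~\ref{cartan} gives $\Psi_k(s)=1$, so
\[
\tilde{t}_k^{-1}g\tilde{t}_k=s_0^*\zeta .
\]
Since $t_k=\lambda_0(t)^{-k}=\mathrm{diag}(t^{-k},t^{k})$, we have
\[
s=\begin{pmatrix}a&b\\ c&d\end{pmatrix}\in S_k \iff c\in\mathscr{P}^{2k}, \qquad s_0=\begin{pmatrix}a&t^{2k}b\\ t^{-2k}c&d\end{pmatrix}.
\]
Thus the second coordinate of an element of $\Gamma_k(m)$ is, modulo $K_m^*$, an explicit entrywise rescaling of the first coordinate, twisted by the same $\zeta$.

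\textbf{Step 2: the candidate image and membership in $\Gamma_k'(m)$.} Following Section~\ref{valuation correspondence} and Corollary~\ref{closeness2}, choose $s$ in normalized form and take $g'=s'^*\phi(\zeta)$ with $s\sim s'$. Because $\sim$ preserves valuations of entries below $m$, we get $c'\in\mathscr{P}'^{2k}$, so $s'\in S'_k$ and $g'\in\widetilde{G}(\mathscr{O}')\cap\tilde{t}'_k\widetilde{G}(\mathscr{O}')\tilde{t}_k'^{-1}$. The rescaled matrices also correspond: $s_0\sim s'_0$, since multiplying an entry by $t^{\pm 2k}$ and by $t'^{\pm 2k}$ shifts valuations equally and leaves Teichmüller parts alone. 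Applying $\Psi'_k\equiv 1$ over $F'$ then gives
\[
(\tilde{t}_k^{-1}g\tilde{t}_k)'=s_0'^*\phi(\zeta)=\tilde{t}_k'^{-1}g'\tilde{t}'_k \pmod{K_m'^*}.
\]
Hence $\chi$ lands in $\Gamma'_k(m)$, and its value is given by the same formula as in the description of $\Gamma'_k(m)$ in the preceding lemma.

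\textbf{Step 3: well-definedness (main obstacle).} The domain is $g$ modulo $K_m^*\cap\tilde{t}_kK_m^*\tilde{t}_k^{-1}$, which consists of $K_m$-elements whose lower-left entry lies in $\mathscr{P}^{m+2k}$. This is finer information than $s\bmod K_m$: the class of $s_0\bmod K_m$ depends on $c$ modulo $\mathscr{P}^{m+2k}$. My first attempt would be to note that $\Gamma_k(m)$ is the graph of a map determined by its first coordinate, and to argue that $\chi$ is the restriction of $\Phi_m\times\Phi_m$. If this fails, I would strengthen the hypothesis, either to $(m+2k)$-closeness or to $2k<m$ (so that the condition $c\in\mathscr{P}^{2k}$ is visible in $\mathscr{O}/\mathscr{P}^m$). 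Under that hypothesis, changing $s$ by an element of $K_m\cap t_kK_mt_k^{-1}$ changes $s'$ by the corresponding element on the $F'$ side, via the isomorphism $\mathscr{O}/\mathscr{P}^{m+2k}\cong\mathscr{O}'/\mathscr{P}'^{m+2k}$. Remark~\ref{corro 2} then ensures the $\theta$-values match, so no stray roots of unity appear; this is where Remark~\ref{not well defined Kubota} warns that care is needed.

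\textbf{Step 4: homomorphism and bijectivity.} $\chi$ is a homomorphism because it is the restriction of $\Phi_m\times\Phi_m$, and $\Phi_m$ is a homomorphism by Corollary~\ref{closeness2}. Bijectivity follows by running the same construction from $F'$ to $F$ with $\phi^{-1}$. The resulting map is inverse to $\chi$ on first coordinates, and since elements of $\Gamma_k(m)$ and $\Gamma_k'(m)$ are determined by their first coordinates (Step~1), it is inverse on the second coordinates as well.
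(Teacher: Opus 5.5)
Your proposal has a genuine gap at its central step. You need to show that $\Phi_m\times\Phi_m$ carries \emph{every} element of $\Gamma_k(m)$ into $\Gamma_k'(m)$ under $m$-closeness alone, and this is not established as stated. In Step~2 you pass to a normalized representative of $K_ms$. But, as you observe in Step~3, the pair $(K_m^*g,K_m^*\tilde t_k^{-1}g\tilde t_k)$ depends on $g$ modulo the smaller group $K_m^*\cap\tilde t_kK_m^*\tilde t_k^{-1}$. So normalizing the lower-left entry changes the element you are mapping. Concretely, take $k\ge1$ and $y\in\mathscr O$ with $m-2k\le v(y)<m$. The element $g=\tilde u_-(t^{2k}y)$ lies in $K_m^*$, and by Section~\ref{torus normalize unipotent} it satisfies $\tilde t_k^{-1}g\tilde t_k=\tilde u_-(y)$. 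Hence $(K_m^*,K_m^*\tilde u_-(y))\in\Gamma_k(m)$, although $\tilde u_-(y)\notin K_m^*$. The same example refutes your ``first attempt'' and the claim in Step~4 that elements of $\Gamma_k(m)$ are determined by their first coordinate. Your fallback hypotheses ($(m+2k)$-closeness, or $2k<m$) change the statement. They would also be useless downstream: the lemma is applied in Section~\ref{double bijection} for every $k\ge0$ at one fixed $m$ to obtain Corollary~\ref{linear space isomorphism}, and no single closeness level covers all $k$.

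The missing observation is that no precision beyond level $m$ is needed, because the identification $\mathscr O/\mathscr P^m\cong\mathscr O'/\mathscr P'^m$ sends $t$ to $t'$; you should transport the rescaled entry rather than the original one. For $k\ge1$, each $s\in S_k$ factors as $s=u_-(y)\lambda_0(a)u_+(x)$ with $y\in\mathscr P^{2k}$, $a\in\mathscr O^\times$, $x\in\mathscr O$, and then $s_0=u_-(t^{-2k}y)\lambda_0(a)u_+(t^{2k}x)$. Transport $t^{-2k}y$, $a$, $x$ modulo $\mathscr P^m$ to $y''$, $a'$, $x'$, and set $s'=u_-(t'^{2k}y'')\lambda_0(a')u_+(x')$. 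Then $\Phi_m(K_ms)=K_m's'$ and $\Phi_m(K_ms_0)=K_m's_0'$ hold simultaneously.

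This is essentially what the paper does. It writes $g$ as a word in the $\tilde u_\pm(x_i)$, conjugates letter by letter using Section~\ref{torus normalize unipotent}, and transports the same word on both sides. It gets surjectivity from the reverse construction, with injectivity and multiplicativity automatic for a restriction of $\Phi_m\times\Phi_m$, so it never needs a graph property.

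Finally, do not rely on $\Psi_k\equiv1$. Step~3 of Proposition~\ref{cartan} only checks unipotent elements, but $S_k$ also contains $\lambda_0(\mathscr O^\times)$. A direct Kubota-cocycle computation gives $\Psi_k(\lambda_0(a))=(t^k,a^{-1})_F^{2}=\omega(a)^{-2k(q-1)/n}$, which is nontrivial for suitable $a$ whenever $n\nmid 2k$. So the correct identity in Step~1 is $\tilde t_k^{-1}g\tilde t_k=s_0^*\,\zeta\,(t^k,a^{-1})_F^{2}$. Your argument survives only because this factor depends on $\omega(a)$ alone, so that $(t'^k,a'^{-1})_{F'}^{2}=\phi\big((t^k,a^{-1})_F^{2}\big)$; you must supply that check yourself.
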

\begin{proof}
    We have 
\[
G(\mathscr{O})=\langle U_+(\mathscr{O}),U_-(\mathscr{O})\rangle .
\]
Hence, for any $g\in \widetilde{G}(\mathscr{O})$, we may write,

\begin{align}
    g=\prod_{i=1}^n\tilde{u}_\alpha(x_i)\zeta ,\quad\text{ with }\alpha\in \{\pm 1\},
\end{align}
for some $\zeta\in \mu_n(F)$ and $x_i\in \mathscr{O}$ and $\tilde{u}_\alpha(x_i)$ is defined in Section \ref{unipotent splitting}. From Section~\ref{torus normalize unipotent}, we also have
\begin{align}
    \tilde{t}_k^{-1}g\tilde{t}_k=\prod_{i=1}^n\tilde{u}_\alpha(t^{2\alpha k}x_i)\zeta.
\end{align}
From Section~\ref{valuation correspondence} and Corollary~\ref{closeness2}, it follows that
\begin{align*}
    g'=k_1'\prod_{i=1}^n\tilde{u}_\alpha(x_i')\phi(\zeta)\quad\text{ and } \quad (\tilde{t}_k^{-1}g\tilde{t}_k)'
=k_2'\prod_{i=1}^n\tilde{u}_\alpha(t'^{2\alpha k}x_i')\phi(\zeta),
\end{align*}

 for some $k_1',k_2'\in K_m'^*$, where $v(x_i)=v(x_i')$. Now,
\begin{align*}
(\tilde{t}_k^{-1} g \tilde{t}_k)'
&=k_2'\prod_{i=1}^n\tilde{u}_\alpha(t'^{2\alpha k}x_i')\phi(\zeta) =k_2'\tilde{t'_k}^{-1}\prod_{i=1}^n\tilde{u}_\alpha(x_i')\tilde{t'_k}\phi(\zeta) =k_2'\tilde{t'_k}^{-1}k_1'^{-1}g'\tilde{t'_k} .
\end{align*}
Hence, $(K_m'^*g',K_m'^*(\tilde{t}_k^{-1} g \tilde{t}_k)')
=(K_m'^*g',K_m'^*\tilde{t'_k}^{-1}{k_1'}^{-1}g'\tilde{t'_k})$ which lies in ${\Gamma}_k'(m)$. This shows that the map $\chi$ is well defined. Since $\chi$ is the restriction of the map $\Phi_m\times\Phi_m$, it follows immediately that $\chi$ is an injective group homomorphism.

To prove surjectivity, consider an element
\[
(K_m'^*h',K_m'^*\tilde{t'_k}^{-1}h'\tilde{t'_k})\in {\Gamma}_k'(m).
\]
Write $h'=\prod_{i=1}^n \tilde{u}_\alpha(y_i')\eta$. Then there exists an element $h=\prod_{i=1}^n \tilde{u}_\alpha(y_i)\zeta\in \widetilde{G}(\mathscr{O})$ with $v(y_i)=v(y_i')$ and $\phi(\zeta)=\eta$ such that
\[
\Phi_m(K_m^*h)=K_m'^*h'.
\]
Moreover,
\[
\tilde{t}_k^{-1} h \tilde{t}_k
=\prod_{i=1}^n\tilde{u}_\alpha(t^{2\alpha k}y_i)\zeta
\quad\text{and}\quad
\tilde{t'_k}^{-1} h'\tilde{t'_k}
=\prod_{i=1}^n\tilde{u}_\alpha(t'^{2\alpha k}y_i')\eta .
\]
Hence,
\[
\Phi_m(K_m^*\tilde{t}_k^{-1} h \tilde{t}_k)
=K_m'^*\tilde{t'_k}^{-1} h'\tilde{t'_k},
\]
which completes the proof.
\end{proof}

\subsection{}\label{double bijection}
Using Lemma~\ref{stabilizer} together with the orbit--stabilizer theorem, for each $k \ge 0$ we obtain the following bijection:
\begin{align*}
X_k(m) 
&\longrightarrow 
\frac{\widetilde{G}(\mathscr{O})/K_m^* \times \widetilde{G}(\mathscr{O})/K_m^*}{\Gamma_k(m)}
\longrightarrow
\frac{\widetilde{G}(\mathscr{O}')/K_m'^* \times \widetilde{G}(\mathscr{O}')/K_m'^*}{\Gamma_k'(m)}
\longrightarrow 
X_k'(m),\\
K_m^* g_1 \tilde{t}_k g_2 K_m^*
&\longmapsto
(K_m^* g_1,\, K_m^* g_2)\Gamma_k(m)
\longmapsto
(K_m'^* g_1',\, K_m'^* g_2')\Gamma_k'(m)
\longmapsto
K_m'^* g_1' \tilde{t}_k' g_2' K_m'^* .
\end{align*}

Moreover, via this correspondence, the element 
$K_m^* g_1^* \tilde{t}_k g_2^* \zeta K_m^*$ is mapped to 
$K_m'^* g_1'^* \tilde{t}_k' g_2'^* \phi(\zeta) K_m'^*$.
This immediately yields the following corollary.

\begin{corollary}\label{linear space isomorphism}
For $m$-close local fields $F$ and $F'$, we have a natural $\mathbb{C}$-vector space isomorphism between the genuine Hecke algebras of the corresponding metaplectic groups:
\[
\Phi_m:\mathcal{H}_\varepsilon(\widetilde{G}(F), K_m^*)
\longrightarrow
\mathcal{H}_{\varepsilon'}(\widetilde{G}(F'), K_m'^*).
\]
\end{corollary}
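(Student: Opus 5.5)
The plan is to define $\Phi_m$ on the basis of Proposition~\ref{basis of hecke algebra} and extend linearly. For each $K_m^*gK_m^*\in X(m)$ we set
\[
\Phi_m\big(\psi_{K_m^*gK_m^*}\big)=\psi'_{\Phi_m(K_m^*gK_m^*)},
\]
where $\Phi_m(K_m^*gK_m^*)\in X'(m)$ is the image of the double coset under the bijection of Section~\ref{double bijection}. The first step is to record that $X(m)=\bigsqcup_{k\ge 0}X_k(m)$ modulo the central twist. By Proposition~\ref{cartan}, $\widetilde{G}_o$ is the disjoint union of the pieces $G(\mathscr O)^*\tilde t_kG(\mathscr O)^*$. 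Each $\widetilde{G}_k$ is in turn the union of the $\mu_n(F)$-translates of the corresponding piece of $\widetilde{G}_o$, and these translates are disjoint. So a $K_m^*$-double coset in $\widetilde G_k$ is of the form $K_m^*g_1^*\tilde t_kg_2^*\zeta K_m^*$, and it lies in $\widetilde G_o$ exactly when $\zeta=1$.

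The second step is to check that the bijection $X_k(m)\to X_k'(m)$ of Section~\ref{double bijection} is compatible with this structure. It sends $K_m^*g_1^*\tilde t_kg_2^*\zeta K_m^*$ to $K_m'^*g_1'^*\tilde t_k'g_2'^*\phi(\zeta)K_m'^*$. Since $\phi$ restricted to $\mu_n$ is a bijection with $\phi(1)=1$, the bijection restricts to a bijection between the $\zeta=1$ parts, that is, between the $K_m^*$-double cosets of $\widetilde G_o$ and those of $\widetilde G_o'$. It is also $\mu_n$-equivariant through $\phi$. Taking the union over $k$ gives a bijection $X(m)\to X'(m)$.

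The third step is to show that $\varepsilon'$ is chosen compatibly, namely $\varepsilon'=\varepsilon\circ\phi^{-1}$ on $\mu_n(F')$; I would take this as the normalization implicit in the statement. With this choice, the functions $\psi$ and $\psi'$ take matching values on corresponding cosets $K_m^*g\zeta K_m^*$ and $K_m'^*g'\phi(\zeta)K_m'^*$. Linear extension then sends a basis to a basis, so $\Phi_m$ is a $\mathbb C$-linear isomorphism. Equivalently, $\Phi_m(f)(g'\phi(\zeta))=f(g\zeta)$ for corresponding $g\leftrightarrow g'$.

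The main obstacle is the well-definedness of the double-coset bijection. I need that the class of $(K_m^*g_1,K_m^*g_2)$ modulo $\Gamma_k(m)$ maps to a single class modulo $\Gamma_k'(m)$. This follows because $\Phi_m\times\Phi_m$ is a group isomorphism (Corollary~\ref{closeness2}) carrying $\Gamma_k(m)$ onto $\Gamma_k'(m)$ (Lemma~\ref{stabilizer}), so it induces a bijection on coset spaces. I also need that the $\zeta$-component is tracked correctly: the double coset should not absorb a different root of unity on the two sides. On the $F$ side this is guaranteed by the disjointness in Proposition~\ref{cartan}, and the analogous statement for $F'$ gives the same on the other side. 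This is the point I would verify most carefully.
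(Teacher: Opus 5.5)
You follow the paper's route exactly: transport the basis of Proposition~\ref{basis of hecke algebra} along the $\phi$-equivariant double-coset bijection of \S\ref{double bijection}. The point you singled out for careful checking is where this route fails when $n\ge 3$, and citing Proposition~\ref{cartan} does not rescue it, because that disjointness is false. Take $h=\mathrm{diag}(u,u^{-1})$ with $u\in\mathscr O^\times$. Then $h^*=(h,1)$, $(h^*)^{-1}=(h^{-1},1)$, and $h\in S_k=\{c\in\mathscr P^{2k}\}$. Contrary to Step~3 of that proof, $S_k$ is not generated by $U_\pm\cap S_k$: modulo $\mathscr P^{2k}$ those subgroups only produce matrices with diagonal $\equiv 1$. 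Kubota's formula gives $\delta(h,t_k)=(t^k,u^{-1})_F$ and $\delta(ht_k,h^{-1})=(u,t^k)_F=(t^k,u^{-1})_F$, hence
\[
h^*\,\tilde t_k\,(h^*)^{-1}=\tilde t_k\,(t,u^{-1})_F^{2k},\qquad (t,u^{-1})_F=\omega(u)^{-(q-1)/n}.
\]
More generally, factor $s=u_-(c/a)\,\mathrm{diag}(a,a^{-1})\,u_+(b/a)$ inside $S_k$ and use \S\ref{torus normalize unipotent}; this gives $\Psi_k(s)=(t,a^{-1})_F^{2k}$. So the image of $\Psi_k$ is the subgroup of $2k$-th powers in $\mu_n(F)$, which is nontrivial unless $n\mid 2k$ (e.g.\ $n=3$, $k=1$). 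Consequently, for $\zeta$ in that subgroup, $G(\mathscr O)^*\tilde t_k\zeta G(\mathscr O)^*=G(\mathscr O)^*\tilde t_kG(\mathscr O)^*$. A double coset $K_m^*g_1^*\tilde t_kg_2^*\zeta K_m^*$ therefore lies in $\widetilde G_o$ exactly when $\zeta$ is a $2k$-th power, not exactly when $\zeta=1$. This also breaks your last step. For such $\zeta\neq 1$, $D=K_m^*\tilde t_kK_m^*$ and $D\zeta$ are two distinct elements of $X(m)$: they are distinct because $\Psi_k$ is trivial on $K_m\cap t_kK_mt_k^{-1}$, whose diagonal entries lie in $1+\mathscr P^m$. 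Yet $\psi_{D\zeta}=\varepsilon(\zeta)^{-1}\psi_D$. So $\{\psi_D\}_{D\in X(m)}$ spans but is linearly dependent, and ``extend linearly; a basis goes to a basis'' is not a valid step as written. The paper's ``immediately yields'' has the same hole.

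The statement itself survives, and your ``equivalently'' formula is the right definition. All you need is that the union over $k$ of the maps $X_k(m)\to X_k'(m)$ is a well-defined bijection $\Phi$ from all $K_m^*$-double cosets of $\widetilde G(F)$ onto those of $\widetilde G(F')$, with $\Phi(D\zeta)=\Phi(D)\phi(\zeta)$. This follows from Lemma~\ref{stabilizer} plus orbit--stabilizer. It is compatible with the nontrivial $\Psi_k$ precisely because $\Psi_k(s)=(t,a^{-1})_F^{2k}$ depends only on $\omega(a)$, so $\Psi_k'(s')=\phi(\Psi_k(s))$ for corresponding $s\in S_k$, $s'\in S_k'$. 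Define $\Phi_m(f)$ to take the value $f(x)$ on $\Phi(K_m^*xK_m^*)$. Equivariance and $\varepsilon'=\varepsilon\circ\phi^{-1}$ make $\Phi_m(f)$ genuine, and it is $K_m'^*$-bi-invariant and compactly supported by construction. The inverse bijection $\Phi^{-1}$ gives the inverse map. Equivalently, choose one $\psi_D$ per $\mu_n$-orbit of $K_m^*$-double cosets. The action is free for $m\ge 1$, so this is an honest basis, indexed by $K_m\backslash G(F)/K_m$, and $\Phi$ maps orbits bijectively onto orbits. Finally, the restriction $X(m)\to X'(m)$ you worried about needs no disjointness at all, since $K_m^*g_1^*\tilde t_kg_2^*K_m^*\mapsto K_m'^*g_1'^*\tilde t_k'g_2'^*K_m'^*$.
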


\section{Kazhdan Isomorphism}

\begin{theorem}\label{isomorphism}
Let $F$ be a non-archimedean local field and $l$ a positive integer. 
There exists an integer $m \ge l$ such that, for any non-archimedean 
local field $F'$ that is $m$-close to $F$, the vector space isomorphism 
\begin{equation*}
\Phi_l : \mathcal{H}_\varepsilon(\widetilde{G}(F), K_l^*) \longrightarrow \mathcal{H}_{\varepsilon'}(\widetilde{G}(F'), K_l'^*)
\end{equation*}
from Corollary~\ref{linear space isomorphism} is an isomorphism of algebras. 
Here, the metaplectic groups $\widetilde{G}(F)$ and $\widetilde{G}(F')$ are 
equipped with Kubota cocycles, and the character $\varepsilon'$ is given 
by $\varepsilon \circ \phi^{-1}$, where $\phi : \mu_{q-1}(F) \to \mu_{q-1}(F')$ 
is the isomorphism described in Section~\ref{close local fields}.
\end{theorem}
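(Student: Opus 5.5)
We follow Kazhdan's strategy: use finite presentation to reduce multiplicativity of $\Phi_l$ to finitely many structure constants, then show each of these constants is computed inside a finite quotient that is the same for $F$ and $F'$.

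\textbf{Step 1: reduce to finitely many products.} By Theorem~\ref{finitely presented algebra}, $\mathcal{H}_\varepsilon(\widetilde{G}(F),K_l^*)$ is finitely presented. Choose finitely many basis elements $\psi_1,\dots,\psi_r$ from Proposition~\ref{basis of hecke algebra} that generate it, with finitely many relations $R_1,\dots,R_s$. Writing out each product $\psi_i*\psi_j$ and each relation in the basis involves only finitely many double cosets. These lie in finitely many $\widetilde{G}_k$, say with $k\le N$.

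\textbf{Step 2: express structure constants through finite quotients.} Let $\psi_{x}*\psi_{y}=\sum_z c_{xyz}\psi_z$, where $x,y,z$ run over representatives in $\widetilde{G}_o$. Then
\[
c_{xyz}=\sum_{\zeta}\varepsilon(\zeta)^{-1}\#\{\,a\in K_l^*xK_l^*/K_l^*:\ a^{-1}z\zeta\in K_l^*yK_l^*\,\},
\]
up to the normalization of the Haar measure. Write $x=g_1\tilde t_k g_2$ and $y=h_1\tilde t_j h_2$ with $g_i,h_i\in\widetilde{G}(\mathscr O)$, and similarly for $z$. Each right coset $aK_l^*$ is then parametrized by data in $\widetilde{G}(\mathscr O)/K_l^*$ and $\widetilde{G}(\mathscr O)/(K_l^*\cap\tilde t_k K_l^*\tilde t_k^{-1})$. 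For $m\ge l+2N$, these groups are quotients of $\widetilde{G}(\mathscr O)/K_m^*$, which by Corollary~\ref{closeness2} is identified with its analogue over $F'$.

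\textbf{Step 3: transport the incidence condition.} The condition $a^{-1}z\zeta\in K_l^*yK_l^*$ must be expressed as an equation in these finite groups. This requires computing products of the form $\tilde t_k^{-1}g\,\tilde t_j$ with $g\in\widetilde{G}(\mathscr O)$, bringing them to Cartan form, and reading off the central factor $\zeta$. Write $g$ as a product of canonical unipotent lifts $\tilde u_\pm(x_i)$ and a central element, as in Lemma~\ref{stabilizer}. Section~\ref{torus normalize unipotent} shows that conjugation by $\tilde t_k$ only rescales the $x_i$ by $t^{\pm 2k}$ and introduces no cocycle. Multiplying out then requires evaluating Kubota cocycles $\delta(g,h)$ on matrices whose entries have valuation below $m$, because $k,j\le N$ and the entries are taken modulo $\mathscr P^l$. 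For such matrices, Proposition~\ref{corro} and Remark~\ref{corro 2} give $\delta'(g',h')=\phi(\delta(g,h))$ and $\theta'=\theta$, provided $g\sim g'$, $h\sim h'$ and $gh\sim g'h'$. Combined with the bijection of Section~\ref{double bijection}, this gives $c'_{x'y'z'}=c_{xyz}$, with $\varepsilon'=\varepsilon\circ\phi^{-1}$ absorbing the change of central characters. Consequently $\Phi_l$ preserves the generator products and the relations $R_i$. The relations then define a homomorphism from the target algebra back to the source, inverse to $\Phi_l$; here we use that the target is also finitely presented and that the roles of $F$ and $F'$ are symmetric.

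\textbf{Main obstacle.} The hardest step is Step 3. By Remark~\ref{not well defined Kubota}, $\delta$ is not constant on $K_m$-double cosets, so we cannot simply compute $\delta$ on coset representatives. My first approach is to make the comparison entirely through the splitting $S$ and the unipotent factorization. I would choose representatives of the form $g^*\tilde t_k\zeta h^*$, which is legitimate by Proposition~\ref{cartan}, and reduce every product to $g^*\tilde t_k h^*\tilde t_j f^*$. I would then rewrite $h^*$ using Bruhat/Iwahori factorization into $\tilde u_+$, torus and $\tilde u_-$ pieces with controlled valuations, so that the only cocycles that appear are Hilbert symbols of elements with valuation below $m$. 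I must also check that $\sim$ is preserved at every intermediate product; this is the requirement for applying Proposition~\ref{corro}, and it is what forces $m$ to be taken large relative to $N$ and $l$.
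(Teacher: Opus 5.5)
Your Steps 2--3 match the paper's Lemmas~\ref{doublecosetbijection} and \ref{algebra-iso}. Once $F'$ is $m_C$-close to $F$, $\Phi_l$ is multiplicative on functions supported in $\widetilde G_C$. The proof rewrites $\tilde t_k g^*\tilde t_r=h_1^*\tilde t_s h_2^*\zeta$ and transports $\zeta$ by Proposition~\ref{corro}.

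\textbf{The gap.} The gap is in passing from this bounded statement to the theorem. The presentation of $\mathcal H_\varepsilon(\gf,K_l^*)$ plus bounded multiplicativity only gives \emph{some} algebra homomorphism $\bar\tau$ with $\bar\tau(f_i)=\Phi_l(f_i)$ on the generators. The theorem is about $\Phi_l$ itself. Agreement on generators says nothing about $\Phi_l$ elsewhere, since $\Phi_l$ is not yet known to be multiplicative. You also cannot push your structure-constant comparison to all basis elements $\psi_{K_l^*g_1\tilde t_kg_2K_l^*}$. It needs $m\ge l+2k$, while $m$ must be fixed before $k$ ranges over $\bz_{\ge 0}$.

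\textbf{The missing ingredient.} You need a factorization that holds over every field and uniformly in $k$; this is Lemma~\ref{volume}. First, $\mathrm{Vol}(K_l^*\tilde t_kK_l^*)\,\mathrm{Vol}(K_l^*\tilde t_rK_l^*)=\mathrm{Vol}(K_l^*\tilde t_{k+r}K_l^*)$, since the index is $q^{2k}$. Lemma~\ref{vol} then gives $\psi_{K_l^*\tilde t_kK_l^*}*\psi_{K_l^*\tilde t_rK_l^*}=\psi_{K_l^*\tilde t_{k+r}K_l^*}$ and $\psi_{K_l^*g_1\tilde t_kg_2K_l^*}=\psi_{K_l^*g_1K_l^*}*\psi_{K_l^*\tilde t_kK_l^*}*\psi_{K_l^*g_2K_l^*}$.

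This is also why the generators cannot be arbitrary basis elements, as in your Step 1. You must take $\{\psi_{K_l^*\tilde t_kK_l^*}\}_{k\in\Lambda'}\cup\{\psi_{K_l^*gK_l^*}\}_{g\in G(\mathscr O)^*/K_l^*}$ (Remark~\ref{finitely presented}). Their images under $\Phi_l$ are the corresponding elements over $F'$. Every basis vector is then the \emph{same} word in these generators over $F$ and over $F'$. So $\bar\tau$ and $\Phi_l$ agree on a basis, and $\Phi_l=\bar\tau$ is a homomorphism.

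\textbf{The symmetry argument.} Your closing appeal to the symmetry of $F$ and $F'$ does not repair this, and is itself flawed. A presentation of $\mathcal H_{\varepsilon'}(\hf,K_l'^*)$, and so the closeness it would demand, depends on $F'$. Hence no single $m$ works for all $F'$ that are $m$-close to $F$. Moreover, an ``inverse'' defined on generators is again not shown to equal $\Phi_l^{-1}$.

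No inverse is needed anyway. $\Phi_l$ is already a linear bijection (Corollary~\ref{linear space isomorphism}), so only the homomorphism property must be proved. That argument uses only the presentation of the source algebra.
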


\qquad This theorem will follow from several technical lemmas presented below.
\begin{lemma}\label{natural}
    Let $C\subset \bz_{\geq 0} $ be a finite subset. Define
    \begin{align*}
        \widetilde{G}_{_C}=\underset{k\in C}{\bigsqcup}G(\mathscr{O})^*\tilde{t}_kG(\mathscr{O})^*.
    \end{align*}
    There exists a natural number $m=m_{_C}\geq l$ that depends only on $C$ but not on $F$ such that for all $g\in \widetilde{G}_{_C}$, we have $gK_m^*g^{-1}\subset K_l^*$ and $g^{-1}K_m^*g\subset K_l^*$.
\end{lemma}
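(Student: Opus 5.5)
We reduce to a statement about matrices in $G(F)$ and then control the central component in $\mu_n(F)$. Write $g = g_1^*\tilde{t}_k g_2^*$ with $g_1,g_2\in G(\mathscr{O})$ and $k\in C$. Then
\[
gK_m^*g^{-1} = g_1^*\,\tilde{t}_k\,\bigl(g_2^*K_m^*g_2^{*-1}\bigr)\,\tilde{t}_k^{-1}\,g_1^{*-1}.
\]
By the normality of $K_m^*$ in $\widetilde{G}(\mathscr{O})$ (proved just before Corollary~\ref{closeness2}), the inner conjugation preserves $K_m^*$. Since $K_l^*$ is also normal in $\widetilde{G}(\mathscr{O})$, the outer conjugation by $g_1^*$ preserves it as well. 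It therefore suffices to find $m$ with
\[
\tilde{t}_kK_m^*\tilde{t}_k^{-1}\subset K_l^* \quad\text{and}\quad \tilde{t}_k^{-1}K_m^*\tilde{t}_k\subset K_l^*
\]
for every $k\in C$. The statement for $g^{-1}$ is handled the same way, because $g^{-1}=g_2^{*-1}\tilde{t}_k^{-1}g_1^{*-1}$ has the same shape with $\tilde{t}_k$ replaced by its inverse.

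For the matrix part, note that $t_k=\lambda_0(t)^{-k}$ acts on $\begin{pmatrix}a&b\\c&d\end{pmatrix}$ by multiplying the off-diagonal entries by $t^{\mp 2k}$ and fixing the diagonal. So if $m\ge l+2k$, conjugation by $t_k^{\pm1}$ sends $K_m$ into $K_l$, uniformly in $F$. We take
\[
m=m_C=l+2\max C.
\]
This depends only on $C$ and $l$.

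The main point is that the $\mu_n$-component of a conjugate is trivial, since $K_l^*=\{(k,1)\}$. To check this, I would use the factorization $K_m=U_-(\mathscr{P}^m)\,T(1+\mathscr{P}^m)\,U_+(\mathscr{P}^m)$, the Iwahori-type decomposition valid for $m\ge1$.

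1. \emph{Unipotent factors.} Conjugation by $\tilde{t}_k$ is computed in Section~\ref{torus normalize unipotent}: $\tilde{u}_\pm(x)$ goes to $\tilde{u}_\pm(t^{\mp 2k}x)$, or the reverse for $\tilde{t}_k^{-1}$, with trivial central part. These images lie in $K_l^*$ because of the valuation bound.

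2. \emph{Torus factor.} For $s=\mathrm{diag}(a,a^{-1})$ with $a\in1+\mathscr{P}^m$, the element $(s,\theta(s))=(s,1)$ commutes with $\tilde{t}_k$ up to a commutator of Hilbert symbols of the form $(a,t)_F^{\pm 2k}$. By \eqref{hilbert symbol formula} these symbols are $1$, since $\omega(a)=1$.

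Note that elements of $K_m^*$ are $(k,1)$ and that the splitting $S$ is a homomorphism on $G(\mathscr{O})$. The factorization in $G$ therefore lifts to $k^*=\tilde{u}_-(x)\,(s,1)\,\tilde{u}_+(y)$; the cocycle $\delta$ among these factors is absorbed by $\theta$, because $\theta$ is trivial on $K_m$. Conjugating factor by factor then gives an element $(k',1)$ with $k'\in K_l$, which is the required element of $K_l^*$.

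I expect the torus commutator in step 2 to be the only delicate point. If the direct Kubota computation is messy, I would instead write $\mathrm{diag}(a,a^{-1})$ as a product of the $\tilde{u}_\pm$ via $\tilde{\lambda}_0(a)=\tilde{w}(a)\tilde{w}(-1)$. This expresses it through unipotents with entries in $\mathscr{O}$ near $\pm1$, and those are again handled by Section~\ref{torus normalize unipotent}. Alternatively, I would observe that the map $x\mapsto$ (central part of $\tilde{t}_kx\tilde{t}_k^{-1}$) is a homomorphism on the pro-$p$ group $K_m$ into $\mu_n$ with $(n,p)=1$, hence trivial. This last argument is the cleanest, and I would use it as the main route.
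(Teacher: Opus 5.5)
Your proof is correct. It shares its skeleton with the paper's: the choice $m=l+2\max C$ is the paper's $m_k$ with $m_k\pm 2k\ge l$. The reduction from a general $g=g_1^*\tilde t_k g_2^*$ to $\tilde t_k^{\pm1}$, via normality of $K_m^*$ and $K_l^*$ in $\widetilde G(\mathscr O)$, is also the paper's, though the paper only uses it tacitly in its final sentence.

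Where you differ is in how you control the $\mu_n(F)$-component.

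\textbf{The paper's route} is your steps 1--2. It writes $K_l^*=\prod_\alpha\widetilde U_{\alpha,l}\times\widetilde T_l$ and conjugates factor by factor. It uses Section~\ref{torus normalize unipotent} for the unipotent factors and simply asserts that $\widetilde T_l$ is preserved. Your remark that the relevant commutator is a power of $(a,t)_F=1$ for $a\in 1+\mathscr P$ supplies the justification the paper omits.

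\textbf{Your main route} observes that $x\mapsto c(x)$, the central part of $\tilde t_k(x,1)\tilde t_k^{-1}$, is a homomorphism $K_m\to\mu_n(F)$. This holds because $\delta$ is trivial on $K_l\times K_l$ once $t_kK_mt_k^{-1}\subset K_l$. The homomorphism is then trivial since $K_m$ is pro-$p$ and $(n,p)=1$. To make ``hence trivial'' airtight without discussing continuity of $c$, note that every element of a pro-$p$ group is an $n$-th power when $(n,p)=1$. Its image in a group of exponent $n$ is therefore $1$.

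\textbf{What each buys.} Your argument needs no cocycle computation at all. It shows more generally that $gK_m^*g^{-1}\subset K_l^*$ for every $g\in\widetilde G(F)$ whose image $\bar g$ satisfies $\bar gK_m\bar g^{-1}\subset K_l$, because $p^{-1}(K_l)=K_l^*\times\mu_n(F)$. So the lemma reduces entirely to the linear statement in $G(F)$, which is visibly uniform in $F$. The paper's factorwise computation instead yields the explicit description of $\tilde t_kK_l^*\tilde t_k^{-1}$ as a product of unipotent and torus pieces. That description is reused in the index computation of Lemma~\ref{volume}.
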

\begin{proof}
   
For a positive integer $l$ and $\alpha \in \{\pm 1\}$, let $\widetilde{U}_{\alpha,l} = U_\alpha(\mathscr{P}^l)^*$ and $\widetilde{T}_l = \tilde{\lambda}_0(1+\mathscr{P}^l)$. The Iwahori factorization then yields
\begin{align*}
    K_l^* = \prod_{\alpha \in \{\pm 1\}} \widetilde{U}_{\alpha,l} \times \widetilde{T}_l.
\end{align*}

Then,
\begin{align*}
    \tilde{t}_kK_l^*\tilde{t}_k^{-1}=\underset{\alpha\in\{\pm 1\}}{\prod} \widetilde{U}_{\alpha,\,l-2\alpha k} \times  \widetilde{T_l}\quad
     \textit{and,}\quad\tilde{t}_k^{-1}K_l^*\tilde{t}_k=\underset{\alpha\in\{\pm 1\}}{\prod} \widetilde{U}_{\alpha,\,l+2\alpha k} \times  \widetilde{T_l}.
\end{align*}
For each $k\in C$, we will choose a large enough $m_k\geq l$ such that $m_k-2\alpha k,m_k+2\alpha k\geq l$. 

\qquad Then, $\tilde{t}_k{K_{m_k}^*}\tilde{t}_k^{-1},\, \tilde{t}_k^{-1}{K_{m_k}^*}\tilde{t}_k\subset K_l^* $. Finally, for $m=max\{m_{_k}:k\in C\}$, we have that $g{K_{_m}^*}g^{-1}\subset K_l^*$ and  $g^{-1}{K_{_m}^*}g\subset K_l^*$ for all $g\in \widetilde{G}_{_C}$.
\end{proof}

\begin{corollary}\label{biinv}
Suppose $F'$ is $m=m_C$-close to $F$, where $m_C$ comes from Lemma~\ref{natural}. 
For any $k,r\in C$ and $g_i^*=(g_i,\theta(g_i))\in G(\mathscr{O})^*$, the set
\[
g_1^*\tilde{t}_k g_2^*K_l^*g_3^*\tilde{t}_rg_4^*K_l^*
\]
is $K_m^*$-bi-invariant. The same holds over $F'$.
\end{corollary}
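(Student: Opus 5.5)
We have to show that the set $A:=g_1^*\tilde{t}_k g_2^*K_l^*g_3^*\tilde{t}_rg_4^*K_l^*$ satisfies $K_m^*A=A$ and $AK_m^*=A$. The idea is to move $K_m^*$ through each factor, using normality of $K_m^*$ and $K_l^*$ in $\widetilde{G}(\mathscr{O})$ (Section~\ref{close local fields}) and Lemma~\ref{natural} to get past the torus elements. Since $m\ge l$, we have $K_m^*\subset K_l^*$.

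\emph{Right invariance.} The set $A$ ends with $K_l^*$, and $K_l^*K_m^*=K_l^*$. Hence $AK_m^*=A$ immediately.

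\emph{Left invariance.} We push $K_m^*$ step by step to the right.
\begin{enumerate}
\item Since $g_1^*\in\widetilde{G}(\mathscr{O})$ and $K_m^*$ is normal in $\widetilde{G}(\mathscr{O})$, we get $K_m^*g_1^*=g_1^*K_m^*$.
\item Next we need $K_m^*\tilde{t}_k\subset\tilde{t}_kK_l^*$. This is $\tilde{t}_k^{-1}K_m^*\tilde{t}_k\subset K_l^*$, which is Lemma~\ref{natural} applied to $g=\tilde{t}_k\in\widetilde{G}_C$ (as $k\in C$). So $K_m^*g_1^*\tilde{t}_k\subset g_1^*\tilde{t}_kK_l^*$.
\item Normality of $K_l^*$ gives $K_l^*g_2^*=g_2^*K_l^*$, and this $K_l^*$ is absorbed into the middle factor: $K_l^*K_l^*=K_l^*$.
\end{enumerate}
Together these give $K_m^*A\subset A$. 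Applying the same to $(K_m^*)^{-1}=K_m^*$ gives equality, since $1\in K_m^*$ already yields $A\subset K_m^*A$.

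Only the step past the torus element $\tilde{t}_k$ needs care, and it uses only the inclusion $\tilde{t}_k^{-1}K_m^*\tilde{t}_k\subset K_l^*$ from Lemma~\ref{natural}. The factor $\tilde{t}_r$ is never crossed, so $r\in C$ is not even needed for this argument.

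\emph{Over $F'$.} The constant $m=m_C$ in Lemma~\ref{natural} depends only on $C$, not on the field. Normality of $K_m'^*$ and $K_l'^*$ in $\widetilde{G}(\mathscr{O}')$ holds by the same computation. The proof therefore transfers verbatim.
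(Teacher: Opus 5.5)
Your proof is correct and is essentially the paper's argument. The paper applies Lemma~\ref{natural} directly to $g=g_1^*\tilde{t}_k g_2^*\in\widetilde{G}_C$ to get $K_m^*g_1^*\tilde{t}_kg_2^*\subset g_1^*\tilde{t}_kg_2^*K_l^*$, whereas you apply it only to $\tilde{t}_k$ and handle $g_1^*,g_2^*$ via normality of $K_m^*$ and $K_l^*$ in $\widetilde{G}(\mathscr{O})$; this is exactly what the lemma's proof uses implicitly when it passes from $\tilde{t}_k$ to all of $\widetilde{G}_C$, and your explicit right-invariance step via $K_l^*K_m^*=K_l^*$ fills in a point the paper leaves unstated.
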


\begin{proof}
From Lemma~\ref{natural}, we have that $K_m^*(g_1^*\tilde{t}_k g_2^*)\subset g_1^*\tilde{t}_k g_2^* K_l^*$. That is,  
\[
g_1^*\tilde{t}_k g_2^*K_l^*g_3^*\tilde{t}_rg_4^*K_l^*
=
K_m^*g_1^*\tilde{t}_k g_2^*K_l^*g_3^*\tilde{t}_rg_4^*K_l^*.
\]
The same argument applies to $F'$.
\end{proof}

\begin{corollary}\label{uniondecomp}
With the above notation, for each $g_i^*\in G(\mathscr{O})^*$, we have
\[
g_1^*\tilde{t}_k g_2^*K_l^*g_3^*\tilde{t}_rg_4^*K_l^*
=
\bigcup_{k\in K_l^*/ K_m^*}
K_m^*g_1^*\tilde{t}_k g_2^*kg_3^*\tilde{t}_rg_4^*K_l^* .
\]
An analogous decomposition holds over $F'$.
\end{corollary}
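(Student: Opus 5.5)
The decomposition follows from the fact that $K_m^*$ is a subgroup of $K_l^*$ (since $m\ge l$), so $K_l^*$ is a finite union of cosets of $K_m^*$. The overloaded letter $k$ in the union will be read as a representative $\kappa$ of $K_l^*/K_m^*$, kept distinct from the coweight index $k$.

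First I will note that $K_m^*\subseteq K_l^*$. This holds because $K_m\subseteq K_l$ and both starred groups consist of the elements $(x,1)$, by Section~\ref{kubota}. Since $K_m^*$ is normal in $\widetilde{G}(\mathscr{O})$, left and right cosets agree, and $K_l^*/K_m^*$ is a finite set. Its finiteness follows from Proposition~\ref{bij}, which identifies it with a subgroup of $G(\mathscr{O})/K_m$. Hence
\[
K_l^*=\bigsqcup_{\kappa\in K_l^*/K_m^*}\kappa K_m^*=\bigsqcup_{\kappa\in K_l^*/K_m^*}K_m^*\kappa .
\]

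Next I substitute this into the middle factor and distribute the product:
\[
g_1^*\tilde{t}_k g_2^*K_l^*g_3^*\tilde{t}_rg_4^*K_l^*=\bigcup_{\kappa} g_1^*\tilde{t}_k g_2^*K_m^*\kappa\, g_3^*\tilde{t}_rg_4^*K_l^* .
\]
To move $K_m^*$ to the left of $g_1^*\tilde{t}_kg_2^*$, I set $h=g_1^*\tilde{t}_kg_2^*\in\widetilde{G}_C$. I need $hK_m^*h^{-1}=K_m^*$, or at least $hK_m^*=K_m^*h$ up to $K_l^*$. This is the only real subtlety: Lemma~\ref{natural} only gives $hK_m^*h^{-1}\subseteq K_l^*$, not $\subseteq K_m^*$.

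To avoid this issue, I would instead argue both inclusions directly. For the inclusion "$\supseteq$", each set $K_m^*h\kappa g_3^*\tilde{t}_rg_4^*K_l^*$ lies in $K_m^*hK_l^*g_3^*\tilde{t}_rg_4^*K_l^*$. By Corollary~\ref{biinv}, this equals $hK_l^*g_3^*\tilde{t}_rg_4^*K_l^*$.

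For the inclusion "$\subseteq$", any element has the form $h\kappa_0\kappa g_3^*\tilde{t}_rg_4^*y$ with $\kappa_0\in K_m^*$ and $y\in K_l^*$, where I write an element of $K_l^*$ as $\kappa_0\kappa$. This element visibly lies in $hK_m^*\kappa(\cdots)K_l^*$. Then $hK_m^*=(hK_m^*h^{-1})h$. Since $g_2^*$ and $g_1^*$ lie in $\widetilde{G}(\mathscr{O})$, which normalizes $K_m^*$, we get $hK_m^*h^{-1}=g_1^*\tilde{t}_kK_m^*\tilde{t}_k^{-1}g_1^{*-1}$. This is not obviously inside $K_m^*$.

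So I would set up the union more carefully, indexing it by $K_m^*\backslash K_l^*$-type representatives on the correct side. For each element $h\kappa'x$ with $\kappa'\in K_l^*$, I write $\kappa'=\kappa\kappa_0$ with $\kappa_0\in K_m^*$, using right cosets $K_l^*=\bigsqcup\kappa K_m^*$. I then absorb $\kappa_0$ into the right-hand side, since $\kappa_0 g_3^*\tilde{t}_rg_4^*=g_3^*\tilde{t}_rg_4^*(\text{element of }K_l^*)$ by Lemma~\ref{natural} applied to $g_3^*\tilde{t}_rg_4^*\in\widetilde{G}_C$. This gives
\[
h\kappa'x\in h\kappa\, g_3^*\tilde{t}_rg_4^*K_l^*\subseteq K_m^*h\kappa g_3^*\tilde{t}_rg_4^*K_l^* .
\]
Combining this with the "$\supseteq$" inclusion gives the equality.

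The only obstacle is the bookkeeping of which side $K_m^*$ is absorbed on. Lemma~\ref{natural} applied to the right-hand element $g_3^*\tilde{t}_rg_4^*$ resolves it, and the union is finite because the index of $K_m^*$ in $K_l^*$ is finite. The argument over $F'$ is verbatim.
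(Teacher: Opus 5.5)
Your argument is correct and uses exactly the paper's two ingredients: Corollary~\ref{biinv} (that is, Lemma~\ref{natural} applied to $g_1^*\tilde{t}_kg_2^*$) supplies the left factor $K_m^*$, and Lemma~\ref{natural} applied to $g_3^*\tilde{t}_rg_4^*$ absorbs the inner $K_m^*$ of the decomposition $K_l^*=\bigsqcup\kappa K_m^*$ into the trailing $K_l^*$. The paper writes this as a single chain of equalities (first replace the set $X$ by $K_m^*X$, then decompose, then absorb) instead of two inclusions, so your detour about commuting $K_m^*$ past $g_1^*\tilde{t}_kg_2^*$ is unnecessary but harmless.
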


\begin{proof}
Applying Corollary~\ref{biinv}, we have
\[
g_1^*\tilde{t}_k g_2^*K_l^*g_3^*\tilde{t}_rg_4^*K_l^*
=
K_m^*g_1^*\tilde{t}_k g_2^*K_l^*g_3^*\tilde{t}_rg_4^*K_l^* .
\]
Writing $K_l^*$ as a union of distinct $K_m^*$ cosets gives
\begin{align*}
    &K_m^*g_1^*\tilde{t}_k g_2^*
\left(\bigsqcup_{k\in K_l^*/K_m^*} k K_m^* \right)
g_3^*\tilde{t}_r g_4^*K_l^*\\
&=\bigcup_{k\in K_l^*/K_m^*}K_m^*g_1^*\tilde{t}_k g_2^* k K_m^*
g_3^*\tilde{t}_r g_4^*K_l^*.
\end{align*}
Since $r\in C$, applying the Lemma~\ref{natural} yields the desired expression:
\[
\bigcup_{k\in K_l^*/ K_m^*}
K_m^*g_1^*\tilde{t}_k g_2^*kg_3^*\tilde{t}_rg_4^*K_l^*.
\]
The same argument works for $F'$.

\end{proof}

\begin{corollary}\label{repindependence}
The set of distinct $K_m^*$-double cosets in $K_m^*g_1^*\tilde{t}_k g_2^*\tilde{t}_rg_3^*K_l^*$ is independent of the choice of representatives of the cosets
$K_m^*g_1^*$, $K_m^*g_2^*$, and $K_m^*g_3^*$.
\end{corollary}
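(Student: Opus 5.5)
The plan is to show directly that changing a representative by an element of $K_m^*$ does not change the set $K_m^*g_1^*\tilde{t}_k g_2^*\tilde{t}_rg_3^*K_l^*$ at all. Then its decomposition into $K_m^*$-double cosets is automatically the same. Throughout I assume, as in Corollaries~\ref{biinv} and \ref{uniondecomp}, that $k,r\in C$ and $m=m_C$ is the integer of Lemma~\ref{natural}. By Proposition~\ref{bij}, any other representative of $K_m^*g_i^*$ inside $G(\mathscr{O})^*$ has the form $\kappa_i g_i^*$ with $\kappa_i\in K_m^*$. It therefore suffices to treat one replacement $g_i^*\mapsto \kappa_i g_i^*$ at a time.

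I would handle the three slots separately.

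\begin{itemize}
\item \textbf{Replacing $g_1^*$.} This is immediate: $K_m^*\kappa_1 g_1^*=K_m^*g_1^*$.

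\item \textbf{Replacing $g_3^*$.} Write
\[
\tilde{t}_r\kappa_3 g_3^* = \bigl(\tilde{t}_r\kappa_3\tilde{t}_r^{-1}\bigr)\tilde{t}_r g_3^*.
\]
By Lemma~\ref{natural} applied to $\tilde{t}_r\in\widetilde{G}_C$, we have $\tilde{t}_r\kappa_3\tilde{t}_r^{-1}\in K_l^*$. This is the wrong side for direct absorption, so I would instead use the normality of $K_m^*$ in $\widetilde{G}(\mathscr{O})$ (shown just before Corollary~\ref{closeness2}) to get $\kappa_3 g_3^* = g_3^*\kappa_3'$ with $\kappa_3'\in K_m^*\subset K_l^*$. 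The factor $\kappa_3'$ is then absorbed into the right-hand $K_l^*$.

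\item \textbf{Replacing $g_2^*$.} This is the main step. By normality again, $g_2^*$ can be moved so that $\kappa_2$ sits either immediately after $\tilde{t}_k$ or immediately before $\tilde{t}_r$. Taking the latter, we need
\[
\kappa\,\tilde{t}_rg_3^*K_l^* = \tilde{t}_r\bigl(\tilde{t}_r^{-1}\kappa\tilde{t}_r\bigr)g_3^*K_l^*,
\]
where $\tilde{t}_r^{-1}\kappa\tilde{t}_r\in K_l^*$ by Lemma~\ref{natural}. Conjugating this element past $g_3^*$ uses the normality of $K_l^*$ in $\widetilde{G}(\mathscr{O})$, which holds by the same argument as for $K_m^*$. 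The result is $\tilde{t}_r g_3^* K_l^*$, as required.
\end{itemize}

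In this way the whole set is unchanged under each replacement, hence so is its set of $K_m^*$-double cosets. The decomposition itself is the one obtained as in Corollary~\ref{uniondecomp}.

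The main obstacle I expect is keeping track of the central $\mu_n(F)$ factors. Conjugation by $\tilde{t}_r$ must send $K_m^*$ into $K_l^*$ itself, not into $K_l^*\cdot\zeta$. This is guaranteed by the computations in Section~\ref{torus normalize unipotent}, which show that conjugation by $\tilde{t}_k$ preserves the canonical unipotent lifts with trivial $\mu_n$-component, combined with the Iwahori factorization used in Lemma~\ref{natural}. For the torus part $\widetilde{T}_l$, which commutes with $\tilde{t}_r$ up to a Hilbert symbol $(1+\mathscr{P}^l,t)_F=1$, I would check this triviality directly from \eqref{hilbert symbol formula}. The identical argument gives the statement over $F'$.
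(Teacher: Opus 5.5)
Your proposal is correct and follows essentially the paper's argument: the paper also shows that the set $K_m^*g_1^*\tilde{t}_k g_2^*\tilde{t}_rg_3^*K_l^*$ is literally unchanged, absorbing the outer $K_m^*$-factors using the normality of $K_m^*$ in $G(\mathscr{O})^*$ and the middle factor using Lemma~\ref{natural}. The only cosmetic difference is that the paper writes the new representatives as $h_i^*k_i$ and handles all three slots at once, applying Lemma~\ref{natural} in effect to $\tilde{t}_r h_3^*\in\widetilde{G}_C$, whereas you apply it to $\tilde{t}_r$ alone and then use the normality of $K_l^*$.
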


\begin{proof}
Suppose
\[
g_i^*=h_i^*k_i \quad \text{for} \quad 1\leq i \leq 3,
\]
with $h_i^*\in G(\mathscr{O})^*$ and
$k_i\in K_m^*$.

Then
\begin{align*}
K_m^*g_1^*\tilde{t}_k g_2^*\tilde{t}_rg_3^*K_l^*
&=
K_m^*h_1^*k_1\tilde{t}_k h_2^*k_2\tilde{t}_rh_3^*k_3K_l^* \\
&=
K_m^*h_1^*\tilde{t}_k h_2^*k_2\tilde{t}_rh_3^*K_l^*  \\
&=
K_m^*h_1^*\tilde{t}_k h_2^*\tilde{t}_rh_3^*K_l^*,
\end{align*}
where the second equality uses the fact that $K_m^*\trianglelefteq G(\mathscr{O})^* $ and the last equality follows from Lemma~\ref{natural}.
Hence, the resulting set of double cosets does not depend on the chosen representatives.
\end{proof}

\begin{lemma}\label{doublecosetbijection}
Let $C \subset \mathbb{Z}_{\ge 0}$ be a finite subset. Let $k, r \in C$ and $g_i^* = (g_i, \theta(g_i)) \in G(\mathscr{O})^*$ for $1 \le i \le 4$. Assume that $F'$ is $m = m_C$-close to $F$. Let $g_i'^* = (g_i', \theta'(g_i'))$ be such that $\Phi_m(K_m g_i) = K_m' g_i'$ for each $i$. Then, the bijection from Section~\ref{double bijection} induces a bijection from the set of $K_m^*$ double cosets in 
\[
g_1^*\tilde{t}_k g_2^*K_l^*g_3^*\tilde{t}_r g_4^*K_l^*
\]
to the set of $K_m'^*$ double cosets in 
\[
g_1'^*\tilde{t'_k} g_2'^*K_l'^*g_3'^*\tilde{t'_r} g_4'^*K_l'^*.
\]
\end{lemma}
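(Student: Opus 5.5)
We will use the decomposition of Corollary~\ref{uniondecomp} on both sides and match the pieces term by term. By Corollary~\ref{uniondecomp},
\[
g_1^*\tilde{t}_k g_2^*K_l^*g_3^*\tilde{t}_r g_4^*K_l^*=\bigcup_{\kappa\in K_l^*/K_m^*} K_m^*g_1^*\tilde{t}_k (g_2^*\kappa g_3^*)\tilde{t}_r g_4^*K_l^*,
\]
and similarly over $F'$. Since $K_l^*/K_m^*\subset G(\mathscr{O})^*/K_m^*$, Corollary~\ref{closeness1} gives a bijection $\kappa\mapsto\kappa'$ from $K_l^*/K_m^*$ onto $K_l'^*/K_m'^*$, because $\Phi_m$ is reduction modulo $\mathscr{P}^m$ followed by the identification $\mathscr{O}/\mathscr{P}^m\cong\mathscr{O}'/\mathscr{P}'^m$, and this preserves the subgroup of elements congruent to $1$ mod $\mathscr{P}^l$. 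Moreover $\Phi_m$ is a homomorphism, so $h:=g_2\kappa g_3\in G(\mathscr{O})$ corresponds to $h'=g_2'\kappa' g_3'$ modulo $K_m'$. Using $g_2^*\kappa g_3^*=h^*$ (the splitting $S$ is a homomorphism), the problem reduces to showing that, for fixed $g_1,h,g_4\in G(\mathscr{O})$, the bijection of Section~\ref{double bijection} carries the $K_m^*$-double cosets in $K_m^*g_1^*\tilde{t}_kh^*\tilde{t}_rg_4^*K_l^*$ bijectively onto those in $K_m'^*g_1'^*\tilde{t}'_kh'^*\tilde{t}'_rg_4'^*K_l'^*$. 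By Corollary~\ref{repindependence}, these sets do not depend on the choice of representatives, so the reduction is well posed.

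For the reduced statement, we first write $h^*=a^*\tilde{t}_s\zeta b^*$ using the Cartan decomposition (Proposition~\ref{cartan}). Since $h\in G(\mathscr{O})$, we have $s=0$, so it is cleaner to work inside $G(\mathscr{O})$ directly. We factor $h$ as a product of elements $\tilde u_{\pm}(x_i)$, as in the proof of Lemma~\ref{stabilizer}, and then move $\tilde{t}_k$ and $\tilde{t}_r$ past these factors using Section~\ref{torus normalize unipotent}. This computes $\tilde{t}_k h^*\tilde{t}_r$ as an explicit element $c^*\tilde{t}_j\eta d^*$, with $j\le k+r$ and a root of unity $\eta$ given by Hilbert symbols in entries of bounded valuation. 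We would enlarge $C$ (hence $m_C$) in advance to contain all $j\le 2\max C$, so that Lemma~\ref{natural} applies to the product. The key claim is that the same manipulation over $F'$ produces $c'^*\tilde{t}'_j\phi(\eta)d'^*$, with $c\sim c'$ and $d\sim d'$ modulo $K_m'$. This follows from Proposition~\ref{corro} and Remark~\ref{corro 2}, once we ensure that all relevant entries have valuation $<m$, which again holds by taking $m$ large relative to $C$.

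Granting that claim, $K_m^*g_1^*c^*\tilde{t}_j\eta d^*g_4^*K_l^*$ is a union of $K_m^*$-double cosets of the form $K_m^*(g_1c)^*\tilde{t}_j\eta\,(d g_4\kappa_2)^*K_m^*$ with $\kappa_2\in K_l^*/K_m^*$. By Section~\ref{double bijection}, each of these maps to $K_m'^*(g_1'c')^*\tilde{t}'_j\phi(\eta)(d'g_4'\kappa_2')^*K_m'^*$, which is exactly the corresponding piece on the $F'$ side. Taking the union over $\kappa$ and $\kappa_2$ gives the bijection. Injectivity is inherited from the bijection $X_j(m)\to X_j'(m)$, and surjectivity holds because every piece on the $F'$ side arises in this way.

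The main obstacle is the cocycle bookkeeping in the key claim. By Remark~\ref{not well defined Kubota}, $\delta$ is not constant on $K_m$-double cosets, so we cannot simply reduce the product $g_1^*\tilde t_k h^*\tilde t_r g_4^*$ modulo $K_m$. Instead, we will compute it using fixed representatives with entries of the form $u\,t^a$, $a<m$, as in Section~\ref{valuation correspondence}. We must then check that every intermediate product $g\sim g'$ satisfies the hypotheses of Proposition~\ref{corro}, in particular that $gh\sim g'h'$ holds for the chosen representatives. We expect this to require taking $m$ larger than $2\max C+l$, so that no entry valuation crosses $m$ during the computation.
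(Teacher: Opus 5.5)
Your proposal follows essentially the same route as the paper. Both reduce via Corollary~\ref{uniondecomp} to a single set $K_m^*g_1^*\tilde{t}_k h^*\tilde{t}_r g_4^*K_l^*$, put the middle product in Cartan form with an explicit root of unity, transport that root of unity to $F'$ using Proposition~\ref{corro} and Remark~\ref{corro 2}, and match the $K_l^*/K_m^*$-cosets before applying the bijection of Section~\ref{double bijection}. The one real difference is how the Cartan form is obtained, and here the paper's choice is the better one: it Cartan-decomposes $t_k h t_r = h_1 t_s h_2$ directly in $G(F)$ and reads $\zeta$ off the group law as a product of $\beta$'s, $\theta$'s and four Kubota cocycle values. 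Your unipotent-conjugation step would not by itself yield $c^*\tilde{t}_j\eta d^*$, because the relations of Section~\ref{torus normalize unipotent} are exact, so they produce no root of unity. They also move the $\tilde{u}_+$-factors out of $G(\mathscr{O})^*$. The root of unity actually comes from the cocycle, which is what your last paragraph ends up computing anyway.
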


\begin{proof}
From Corollary~\ref{uniondecomp}, we have that it suffices to show the bijection between $K_m^*$-double cosets in $K_m^*g_1^*\tilde{t}_kg_2^*\tilde{t}_r g_3^*K_l^*$ and $K_m'^*$-double cosets in $K_m'^*g_1'^*\tilde{t'_k}g_2'^*\tilde{t'_r} g_3'^*K_l'^*$.

\qquad From Corollary~\ref{repindependence} and Section~\ref{valuation correspondence}, we can also assume that $g_i\sim g_i'$. By Cartan decomposition on a reductive group, we have 
\[
t_k g_2 t_r = h_1 t_s h_2 
\quad \text{and} \quad 
t'_k g'_2 t'_r=  h'_1 t'_s h'_2 ,
\]
for some $s\in \bz_{\geq0}$, where $h_i \in G(\mathscr{O})$ and $h'_i \in G(\mathscr{O}')$ satisfy
\[
\Phi_m(K_m h_i) = K'_m h'_i,
\]
and $h_i \sim h'_i$ for $i=1,2$.

Using Section~\ref{torus splitting}, we obtain
\[
\tilde{t}_k g_2^*\tilde{t}_r
=(t_k,\beta_{-k})(g_2,\theta(g_2))(t_r,\beta_{-r})=
\Big(t_k g_2 t_r,\, \beta_{-k}\beta_{-r}\theta(g_2)\delta(t_k,g_2)\delta(t_k g_2,t_r)\Big),
\]
and 
\[
h_1^*\tilde{t}_s h_2^*=(h_1,\theta(h_1))(t_s,\beta_{-s})(h_2,\theta(h_2))=\Big(h_1t_s h_2,\,\theta(h_1)\theta(h_2)\beta_{-s}\delta(h_1,t_s)\delta(h_1t_s,h_2)\Big).
\]
Therefore,
\begin{align*}
    \tilde{t}_k g_2^*\tilde{t}_r=h_1^*\tilde{t}_s h_2^*\zeta \quad \text{and hence,}\quad K_m^*g_1^*\tilde{t}_k g_2^*\tilde{t}_rg_3^*K_l^*=
K_m^*g_1^*h_1^*\tilde{t}_s h_2^*g_3^*K_l^*\zeta,
\end{align*}

where $\zeta=\beta_{-k}\beta_{-r}\beta_{-s}^{-1}\theta(g_2)\theta(h_1)^{-1}\theta(h_2)^{-1}\delta(t_k,g_2)\delta(t_k g_2,t_r)\delta(h_1,t_s)^{-1}\delta(h_1t_s,h_2)^{-1}$.

\qquad Similarly, for the local field $F'$, we also have  $K_m'^*g_1'^*\tilde{t'_k}g_2'^*\tilde{t'_r}g_3'^*K_l'^*
=
K_m'^*g_1'^*h_1'^*\tilde{t'_s}h_2'^*g_3'^*K_l'^*\eta$, where 
\begin{align*}
    \eta=\beta'_{-k}\beta'_{-r}\beta_{-s}^{'-1}\theta'(g_2')\theta'(h_1')^{-1}\theta'(h_2')^{-1}\delta'(t'_k,g_2')\delta'(t'_k g_2',t'_r)\delta'(h_1',t'_s)^{-1}\delta'(h'_1t'_s,h_2')^{-1}.
\end{align*}

For $j=-k,-r,-s$, the formula for $\beta'_j$ is the same as $\beta_j$ from Section~\ref{torus splitting}, where $t$ is replaced by $t'$. Hence, $\phi(\beta_i)=\beta'_i$.

We also have $g_2\sim g_2'$, $h_i\sim h_i'$, which implies $t_kg_2\sim t'_kg_2'$, $h_1t_s \sim h_1't_s'$ and $h_1t_s h_2=t_kg_2t_r\sim t'_kg'_2t'_r=h'_1t'_s h'_2$. Hence, using the compatibility of cocycles and splittings given in  
Proposition~\ref{corro} and Remark~\ref{corro 2}, we have  
\begin{align*}
    &\theta'(g_2')=\phi(\theta(g_2)),\, \theta'(h_i')=\phi(\theta(h_i)),\,\delta'(t'_k,g_2')=\phi(\delta(t_k,g_2)), \\
    &\delta'(t'_k g_2',t'_r)=\phi(\delta(t_k g_2,t_r)),\, \delta'(h_1',t'_s)^{-1}=\phi(\delta(h_1,t_s)^{-1}),\, \delta'(h'_1t'_s,h_2')^{-1}=\phi(\delta(h_1t_s,h_2)^{-1}).
\end{align*}
That is, $\eta=\phi(\zeta)$. Therefore, if
\begin{align*}
K_m^*g_1^*\tilde{t}_k g_2^*\tilde{t}_rg_3^*K_l^*=
K_m^*g_1^*h_1^*\tilde{t}_s h_2^*g_3^*K_l^*\zeta
\end{align*}
then, 
\begin{align*}
K_m'^*g_1'^*\tilde{t'_k}g_2'^*\tilde{t'_r}g_3'^*K_l'^*
&=
K_m'^*g_1'^*h_1'^*\tilde{t'_s} h_2'^*g_3'^*K_l'^*\phi(\zeta).
\end{align*}

Finally, if $K_l^*=\bigsqcup k_i^*K_m^*$, then
$K_l'^*=\bigsqcup k_i'^*K_m'^*$ with $\Phi_m(K_mk_i)=K_m'k_i'$;  which gives the desired bijection of
double cosets.
\end{proof}

 The Haar measure on $\gf$ is normalized so that $\mu_n(F)\times K_l^*$ has measure $1$. Similarly, the Haar measure on $\hf$ is normalized so that $\mu_n(F')\times K_l'^*$ has measure $1$.

\begin{lemma}\label{vol}
   If for any $g,h\in \gf$, we have 
    \begin{align*}
        Vol(K_l^*gK_l^*)Vol(K_l^*hK_l^*)=Vol(K_l^*ghK_l^*),
    \end{align*} then 
 \begin{align*}
    \psi_{_{K_l^*gK_l^*}} * \psi_{_{K_l^*hK_l^*}}=\psi_{_{K_l^*ghK_l^*}}.
 \end{align*}
    \end{lemma}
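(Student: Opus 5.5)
Here $\psi_{K_l^*xK_l^*}$ denotes the genuine function supported on $\mu_n(F)K_l^*xK_l^*$ that equals $\varepsilon(\zeta)$ on $K_l^*x\zeta K_l^*$; this is the extension of \eqref{basis 2} to an arbitrary $x\in\gf$. The volumes are taken with the normalization in which $\mu_n(F)\times K_l^*$ has measure $1$. Accordingly I read $Vol(K_l^*xK_l^*)$ as the number of left $K_l^*$-cosets in $K_l^*xK_l^*$, i.e.\ $[K_l^*:K_l^*\cap xK_l^*x^{-1}]$, where the central factor $\mu_n(F)$ is absorbed into the normalization.

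The plan is the standard argument for Iwahori--Hecke type multiplicativity, adapted to genuine functions. First, observe that $f:=\psi_{K_l^*gK_l^*}*\psi_{K_l^*hK_l^*}$ lies in $\mathcal H_\varepsilon(\gf,K_l^*)$. It is supported on the product set $\mu_n(F)K_l^*gK_l^*hK_l^*$. Since $gh\in K_l^*gK_l^*hK_l^*$, the support of $f$ contains the double coset of $gh$.

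The second step is to show that this support is exactly $\mu_n(F)K_l^*ghK_l^*$, using the volume hypothesis. Decompose $K_l^*gK_l^*=\bigsqcup_{i=1}^{a}x_iK_l^*$ and $K_l^*hK_l^*=\bigsqcup_{j=1}^{b}y_jK_l^*$, where $a=Vol(K_l^*gK_l^*)$ and $b=Vol(K_l^*hK_l^*)$. Then $K_l^*gK_l^*hK_l^*=\bigcup_{i,j}x_iy_jK_l^*$ consists of at most $ab$ left cosets. It is a union of double cosets and contains $K_l^*ghK_l^*$, which already consists of $ab$ left cosets by hypothesis. Therefore the two sets coincide, and the $ab$ cosets $x_iy_jK_l^*$ are pairwise distinct. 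There is a subtlety when $K_l^*gK_l^*\zeta=K_l^*gK_l^*$ for some $\zeta\neq1$; genuine functions would then vanish there. Since $K_l^*\cap\mu_n(F)=1$ and the counts are taken modulo $\mu_n(F)$, I will carry out the counting in $\gf/\mu_n(F)$-compatible form, so that the argument is unaffected.

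The third step is to evaluate $f(gh)=\int_{\gf}\psi_g(y)\psi_h(y^{-1}gh)\,dy$. The integrand is invariant under $y\mapsto y\zeta$: genuineness contributes the factor $\varepsilon(\zeta)\varepsilon(\zeta)^{-1}=1$. It is also right $K_l^*$-invariant. Hence the integral reduces to a sum over left cosets $y\mu_n(F)K_l^*$, each of measure $1$, on which $y\in\mu_n(F)K_l^*gK_l^*$ and $y^{-1}gh\in\mu_n(F)K_l^*hK_l^*$. Write $y=x_i\zeta$. The condition $gh\in x_i\zeta K_l^*hK_l^*$ holds for $y=g$. By the distinctness established in the second step, $gh\in x_iy_jK_l^*$ for exactly one pair $(i,j)$ modulo $\mu_n$, so exactly one coset contributes. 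On that coset the product of the $\varepsilon$-values is $\varepsilon(\zeta)\varepsilon(\zeta^{-1})=1$, so $f(gh)=1=\psi_{K_l^*ghK_l^*}(gh)$. Both sides are genuine and bi-invariant with the same support, so they agree.

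The main obstacle is the uniqueness claim in the third step. It requires identifying, from the distinctness of the $x_iy_jK_l^*$ within $\gf$, that no two cosets $x_i\zeta$ and $x_{i'}\zeta'$ contribute. This is exactly where the Kubota twisting enters. I would handle it by working with the images in $\gf/\mu_n(F)$, where the volume count becomes the usual one, and then recovering the central character through genuineness.
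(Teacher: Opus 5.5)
Your proposal is essentially the paper's argument: the paper also decomposes both double cosets into left $K_l^*$-cosets, writes the product as $\sum_{i,j}\delta_{x_iy_j}*\psi_{K_l^*}$, and then defers to Howe's counting argument (your second step) to see that the $ab$ cosets $x_iy_jK_l^*$ are distinct and exhaust $K_l^*ghK_l^*$. The $\mu_n(F)$ issue you flag as the main obstacle needs no detour through $\gf/\mu_n(F)$: $\psi_x$ is well defined by \eqref{basis 2} exactly when $K_l^*xK_l^*\cap K_l^*x\zeta K_l^*=\varnothing$ for $\zeta\neq 1$. Applying this to $x=g$ makes the cosets $x_i\mu_n(F)K_l^*$ distinct, and applying it to $x=gh$ (using $x_i\zeta y_jK_l^*\subset K_l^*ghK_l^*\zeta$) excludes every contribution with $\zeta\neq 1$, so exactly one term, equal to $1$, survives in $f(gh)$.
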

    \begin{proof}
      Write $K_m^*gK_m^*$ as a union of $K_m^*$-cosets
\begin{align*}
    K_m^*gK_m^*=\underset{k}{\bigcup}kgK_m^*.
\end{align*}
Suppose $\delta_x$ is the point mass function at $x$, then
\begin{align*}
    \psi_{_{K_m^*gK_m^*}}=\underset{k}{\sum}\delta_{kg} *\psi_{_{K_m^*}}.
\end{align*}
In a similar way, we have 
\begin{align*}
    \psi_{_{K_m^*hK_m^*}}=\underset{k'}{\sum}\delta_{k'h} *\psi_{_{K_m^*}}.
\end{align*}
Now, in the Hecke algebra $\mathcal{H}_\varepsilon(\widetilde{G}(F),K_m^*)$, the identity element is $\psi_{_{K_m^*}}$. Therefore, we have
\begin{align*}
    \psi_{_{K_m^*gK_m^*}}* \psi_{_{K_m^*hK_m^*}}&=(\underset{k}{\sum}\delta_{kg} *\psi_{_{K_m^*}})*(\underset{k'}{\sum}\delta_{k'h} *\psi_{_{K_m^*}})\\
    &=\underset{k,k'}{\sum}\delta_{kg} *\psi_{_{K_m^*}}*(\delta_{k'h} *\psi_{_{K_m^*}})\\
    &=\underset{k,k'}{\sum}\delta_{kgk'h} *\psi_{_{K_m^*}}.
\end{align*}
The rest of the proof then follows from \cite[Proposition 2.2]{howe}.  
    \end{proof}

\begin{lemma}\label{volume}
For the congruence subgroup $K_l^*$ of $G(\mathscr{O})^*$, we have the following:
\begin{enumerate}[label=\roman*.]
    \item For $k,r\in \bz_{\geq 0}$, we have $\psi_{_{K_l^*\tilde{t}_kK_l^*}} * \psi_{_{K_l^*\tilde{t}_rK_l^*}}=\psi_{_{K_l^*\tilde{t}_{k+r}K_l^*}}$.
    \item For $k\in \bz_{\geq 0}$ and $g_1,g_2\in G(\mathscr{O})^*$, we have $\psi_{_{K_l^*g_1\tilde{t}_k g_2K_l^*}}=\psi_{_{K_l^*g_1K_l^*}}*\psi_{_{K_l^*\tilde{t}_kK_l^*}}*\psi_{_{K_l^*g_2K_l^*}}$.
\end{enumerate}
\end{lemma}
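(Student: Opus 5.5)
Both parts will be reduced to Lemma~\ref{vol}. In each case I will check the volume identity, using that $K_l^*$ double cosets are finite unions of left $K_l^*$ cosets, so $\mathrm{Vol}(K_l^*xK_l^*)=[K_l^*:K_l^*\cap xK_l^*x^{-1}]$ under the chosen normalization.

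For (i), I will use the Iwahori factorization $K_l^*=\widetilde{U}_{-1,l}\,\widetilde{T}_l\,\widetilde{U}_{1,l}$ from the proof of Lemma~\ref{natural}. I will also use the conjugation formulas of Section~\ref{torus normalize unipotent}: conjugation by $\tilde t_k=\tilde\lambda_0(t)^{-k}$ sends $\tilde u_+(x)\mapsto\tilde u_+(t^{-2k}x)$ and $\tilde u_-(x)\mapsto \tilde u_-(t^{2k}x)$. The torus part $\widetilde T_l$ commutes with $\tilde t_k$ up to a Hilbert symbol of units in $1+\mathscr P^l$, which is trivial by \eqref{hilbert symbol formula}. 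From these facts,
\[
K_l^*\cap \tilde t_kK_l^*\tilde t_k^{-1}=\widetilde U_{-1,l+2k}\,\widetilde T_l\,\widetilde U_{1,l},
\]
so $\mathrm{Vol}(K_l^*\tilde t_kK_l^*)=q^{2k}$. Equivalently, I can write explicit coset representatives $K_l^*\tilde t_kK_l^*=\bigsqcup_{x}\tilde u_-(x)\tilde t_kK_l^*$, where $x$ runs over $\mathscr P^l/\mathscr P^{l+2k}$. Since $\tilde t_k\tilde t_r=\tilde\lambda_0(t)^{-k-r}=\tilde t_{k+r}$, the volume identity $q^{2k}q^{2r}=q^{2(k+r)}$ holds. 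Lemma~\ref{vol} then gives (i).

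If one prefers to avoid the cited criterion from \cite{howe}, I will instead check directly that the products $\tilde u_-(x)\tilde t_k\tilde u_-(y)\tilde t_r$ give distinct left cosets. The identity $\tilde u_-(x)\tilde t_k\tilde u_-(y)\tilde t_r=\tilde u_-(x+t^{2k}y)\tilde t_{k+r}$ shows they are exactly the $q^{2(k+r)}$ cosets of $K_l^*\tilde t_{k+r}K_l^*$. No $\mu_n$-twist appears, because the unipotent liftings and the torus liftings are the canonical ones of Sections~\ref{unipotent splitting}--\ref{torus normalize unipotent}.

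For (ii), $K_l^*$ is normal in $\widetilde G(\mathscr O)$, as shown before Corollary~\ref{closeness2}. Hence $K_l^*g_1K_l^*=g_1K_l^*$ has volume $1$, and $K_l^*g_1xK_l^*=g_1K_l^*xK_l^*$ has the same volume as $K_l^*xK_l^*$. Applying Lemma~\ref{vol} twice, first to $(g_1,\tilde t_k)$ and then to $(g_1\tilde t_k,g_2)$, gives the factorization. The same conclusion also follows directly, since left translation by $g_1$ is just $\psi_{g_1K_l^*}*(\cdot)$.

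The main obstacle is the genuine-function bookkeeping in (i). I must make sure that the $\varepsilon$-equivariant basis functions multiply without a stray root of unity, that is, that the product coset is $K_l^*\tilde t_{k+r}K_l^*$ and not $K_l^*\tilde t_{k+r}\zeta K_l^*$. The step $\tilde u_-(x)\tilde t_k\tilde u_-(y)\tilde t_r=\tilde u_-(x+t^{2k}y)\tilde t_{k+r}$, which rests on the triviality of the conjugation cocycles in Section~\ref{torus normalize unipotent}, is the point I would verify carefully first.
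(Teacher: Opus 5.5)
Your proposal is correct and follows essentially the same route as the paper. Both compute $\mathrm{Vol}(K_l^*\tilde t_kK_l^*)=[K_l^*:K_l^*\cap\tilde t_kK_l^*\tilde t_k^{-1}]=q^{2k}$ from the Iwahori factorization and the conjugation formulas of Section~\ref{torus normalize unipotent}, note that these volumes multiply, and invoke Lemma~\ref{vol}; your use of $\tilde t_k\tilde t_r=\tilde t_{k+r}$ and of the normality of $K_l^*$ in $\widetilde{G}(\mathscr{O})$ to get the volume identities for part (ii) just makes explicit details the paper leaves implicit.
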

\begin{proof}
To compute the volume $\text{Vol}(K_l^* \tilde{t}_k K_l^*)$, we first note that it is determined by the index of the intersection subgroup:
$$
\text{Vol}(K_l^* \tilde{t}_k K_l^*) = [K_l^* : \tilde{t}_k K_l^* \tilde{t}_k^{-1} \cap K_l^*].
$$

For a positive integer $l$ and $\alpha \in \{\pm 1\}$, let $\widetilde{U}_{\alpha, \,l} := \widetilde{U}_\alpha(\mathscr{P}^l)$ and $\widetilde{T}_l := \tilde{\lambda}_0(1 + \mathscr{P}^l)$. Applying the Iwahori factorization yields:
$$
K_l^* = \prod_{\alpha \in \{\pm 1\}} \widetilde{U}_{\alpha,\, l} \times \widetilde{T}_l.
$$

Then, we have
$$
\tilde{t}_k K_l^* \tilde{t}_k^{-1} = \prod_{\alpha \in \{\pm 1\}} \widetilde{U}_{\alpha, \,l - 2\alpha k} \times \widetilde{T}_l.
$$

This yields the quotient structure:
$$
K_l^* / \left( \tilde{t}_k K_l^* \tilde{t}_k^{-1} \cap K_l^* \right) \cong \prod_{\alpha \in \{\pm 1\}} \widetilde{U}_{\alpha, 
\,l} / \left( \widetilde{U}_{\alpha,\, l} \cap \widetilde{U}_{\alpha, \,l - 2\alpha k} \right).
$$

Taking the cardinality of this quotient yields the index:
$$
[K_l^* : \tilde{t}_k K_l^* \tilde{t}_k^{-1} \cap K_l^*] = \prod_{\alpha \in \{\pm 1\}} [\widetilde{U}_{\alpha,\, l} : \widetilde{U}_{\alpha,\, l} \cap \widetilde{U}_{\alpha,\, l - 2\alpha k}].
$$

For $\alpha = 1$, $\widetilde{U}_{\alpha,\,l}\subseteq \widetilde{U}_{\alpha,\,l-2\alpha k} $, and for  $\alpha =-1$, $\widetilde{U}_{\alpha,\,l-2\alpha k}\subseteq  \widetilde{U}_{\alpha,\,l}$. This gives us
\begin{align*}
    [K_l^* :  \tilde{t}_kK_l^*\tilde{t}_k^{-1}\cap K_l^*]= [\widetilde{U}_{-1,\,l}:\widetilde{U}_{-1,\,l+2\alpha k}].
\end{align*}

There is a bijection between the coset space $\widetilde{U}_{\alpha,\,l}/\widetilde{U}_{\alpha,\,l+2\alpha k}$ and $\mathscr{P}^l/\mathscr{P}^{l+2\alpha k}$. Hence,
\begin{align*}
    Vol(K_l^*\tilde{t}_kK_l^*)=[K_l^* :  \tilde{t}_kK_l^*\tilde{t}_k^{-1}\cap K_l^*]= q^{2\alpha k},
\end{align*}
where $q=[\mathscr{O}:\mathscr{P}]$. Therefore, for any $k,r\in \Lambda^+$, we have
\begin{align*}
    Vol(K_l^*\tilde{t}_kK_l^*)Vol(K_l^*\tilde{t}_rK_l^*)=Vol(K_l^*\tilde{t}_{k+r}K_l^*).
\end{align*}
The parts $i.$ and $ii.$ then follow from Lemma~\ref{vol}
\end{proof}

\begin{remark}\label{finitely presented}
    Choose a finite subset $\Lambda'\subset \bz_{\geq 0}$ such that $\Lambda'$ contains $0$, and generate $\Lambda^+$ as a semigroup. Fix a set of representatives $V_{\mathscr{O}^*}$ of $G(\mathscr{O})^*/K_l^*$. Then, by Lemma~\ref{volume}, the set $S=\{\psi_{_{K_l^*\tilde{t}_kK_l^*}}|k\in \Lambda'\}\cup \{\psi_{_{K_l^*gK_l^*}}|g\in V_{\mathscr{O}^*}\}$ generates the algebra $\mathcal{H}_\varepsilon(\tilde{G}(F),K_l^*)$.
\end{remark}

\begin{lemma}\label{algebra-iso}
   Suppose the local field $F'$ is $m$-close to $F$ for $m = m_C$, where $m_C$ comes from Lemma~\ref{natural}. Then, the map 
$$
\Phi_l : \mathcal{H}_\varepsilon(\widetilde{G}(F), K_l^*) \longrightarrow \mathcal{H}_{\varepsilon'}(\widetilde{G}(F'), {K_l'}^*)
$$
is a homomorphism when restricted to functions supported on $\widetilde{G}_C$. That is, for any $h_1, h_2 \in \mathcal{H}_\varepsilon(\widetilde{G}(F), K_l^*)$ supported on $\widetilde{G}_C$, we have
\begin{align*}
    \Phi_l(h_1 * h_2) = \Phi_l(h_1) * \Phi_l(h_2).
\end{align*}
\end{lemma}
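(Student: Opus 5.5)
By bilinearity of convolution and linearity of $\Phi_l$, I will reduce to basis elements. Proposition~\ref{basis of hecke algebra} says any $h\in\mathcal{H}_\varepsilon(\gf,K_l^*)$ supported on $\widetilde{G}_C$ is a finite combination of the functions $\psi_{K_l^*g K_l^*}$ with $K_l^*gK_l^*\subset\widetilde{G}_C$. By Proposition~\ref{cartan}, each such $g$ can be written as $g_1^*\tilde{t}_k g_2^*\zeta$ with $k\in C$, $g_i^*\in G(\mathscr{O})^*$, $\zeta\in\mu_n(F)$. Since $\psi_{K_l^* g\zeta K_l^*}=\varepsilon(\zeta)^{-1}\psi_{K_l^*gK_l^*}$, and $\Phi_l$ sends $\zeta$ to $\phi(\zeta)$ with $\varepsilon'\circ\phi=\varepsilon$, the scalars match on both sides. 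So it suffices to prove
\[
\Phi_l\big(\psi_{K_l^*g_1^*\tilde{t}_kg_2^*K_l^*}*\psi_{K_l^*g_3^*\tilde{t}_rg_4^*K_l^*}\big)=\psi_{K_l'^*g_1'^*\tilde{t'_k}g_2'^*K_l'^*}*\psi_{K_l'^*g_3'^*\tilde{t'_r}g_4'^*K_l'^*}
\]
for $k,r\in C$, where $g_i'$ corresponds to $g_i$ under $\Phi_m$ and $m=m_C$.

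To compute the left side, I will evaluate the convolution at an arbitrary point $x$. Unfolding the integral gives $(\psi_A*\psi_B)(x)=\int \psi_A(y)\psi_B(y^{-1}x)\,dy$. This is supported on the product set $g_1^*\tilde{t}_kg_2^*K_l^*g_3^*\tilde{t}_rg_4^*K_l^*$, enlarged by $\mu_n(F)$. Its value at $x$ is a weighted count: sum $\varepsilon$ of the central discrepancy over the pairs of $K_l^*$-cosets $(aK_l^*,bK_l^*)$ with $aK_l^*\subset A$, $bK_l^*\subset B$ and $x\in aK_l^*bK_l^*$. By Corollary~\ref{biinv}, this product set is $K_m^*$-bi-invariant. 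Corollary~\ref{uniondecomp} then writes it as a finite union of sets $K_m^*g_1^*\tilde{t}_kg_2^*\kappa g_3^*\tilde{t}_rg_4^*K_l^*$ with $\kappa\in K_l^*/K_m^*$. So I can refine everything to $K_m^*$-double cosets and carry out the count in terms of $K_m^*$-data. This refinement is harmless because, by Lemma~\ref{natural}, $K_m^*$ lies inside all the relevant conjugates of $K_l^*$.

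The core step is Lemma~\ref{doublecosetbijection}. It gives a bijection between the $K_m^*$-double cosets in the product set over $F$ and those over $F'$. Its proof also tracks central elements, sending $\zeta$ to $\eta=\phi(\zeta)$ via Proposition~\ref{corro} and Remark~\ref{corro 2}. I will check three things about this bijection:
\begin{enumerate}
\item it is compatible with the coset decompositions $K_l^*=\bigsqcup k_iK_m^*$ and $K_l'^*=\bigsqcup k_i'K_m'^*$ from Corollary~\ref{closeness2};
\item the multiplicity with which each $K_m^*$-double coset occurs, together with its central twist $\zeta$, is carried to the corresponding multiplicity and twist $\phi(\zeta)$ over $F'$;
\item the Haar normalizations match, since both give $\mu_n\times K_l^*$ volume $1$ and indices such as $[K_l^*:K_m^*]=[K_l'^*:K_m'^*]$ agree by Corollary~\ref{closeness2}.
\end{enumerate}
Granting these, the values of the two convolutions agree at corresponding points after applying $\varepsilon'=\varepsilon\circ\phi^{-1}$. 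Finally I regroup the $K_m^*$-double cosets into $K_l^*$-double cosets, using the bijection of Section~\ref{double bijection} with $m$ replaced by $l$. That gives $\Phi_l(h_1*h_2)=\Phi_l(h_1)*\Phi_l(h_2)$.

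The main obstacle is the central-character bookkeeping in item 2. By Remark~\ref{not well defined Kubota}, the cocycle is not constant on $K_m$-double cosets, so I cannot simply transport $\delta$ coset by coset. My plan is to use Corollary~\ref{repindependence} to choose representatives $g_i\sim g_i'$ whose entries match in valuation and Teichmüller part. Then, exactly as in the proof of Lemma~\ref{doublecosetbijection}, I rewrite $\tilde{t}_kg_2^*\kappa g_3^*\tilde{t}_r$ as $h_1^*\tilde{t}_sh_2^*\zeta$ and compare $\zeta$ with its $F'$-counterpart through Proposition~\ref{corro}. The point to verify is that each product along the way, including those involving $\kappa$, stays in the relation $\sim$ with its $F'$ counterpart, so that Proposition~\ref{corro} applies. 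I expect this may require enlarging $m$ by an amount depending on $C$, so that valuations below $m$ are preserved.
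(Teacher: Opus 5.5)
Your proposal is correct and follows essentially the same route as the paper. You reduce to products $\psi_{K_l^*gK_l^*}*\psi_{K_l^*hK_l^*}$ of basis functions with $g,h\in\widetilde{G}_C$, expand the convolution in the $K_m^*$-double-coset basis with coefficients of the form $n\sum_{\zeta}\varepsilon(\zeta)\,\mathrm{Vol}(\cdots)$, match these coefficients over $F$ and $F'$ via Lemma~\ref{doublecosetbijection} (which carries the central twist $\zeta\mapsto\phi(\zeta)$ through Proposition~\ref{corro}), and conclude because $\Phi_m$ agrees with $\Phi_l$ on $K_l^*$-bi-invariant functions.

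The central-character bookkeeping you single out as the main obstacle is exactly what Lemma~\ref{doublecosetbijection} already packages at $m=m_C$. There $g_2^*\kappa g_3^*$ is absorbed into a single element of $G(\mathscr{O})^*$, whose $K_m^*$-coset corresponds under the group isomorphism $\Phi_m$, and $\sim$-compatible representatives are re-chosen via Corollary~\ref{repindependence}. So products involving $\kappa$ never need to preserve $\sim$ individually, and the paper simply cites that lemma rather than redoing the argument or enlarging $m$.
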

\begin{proof}
    It is enough to prove the lemma for $\psi_{_{K_l^*gK_l^*}}*\psi_{_{K_l^*hK_l^*}}$ for each $g,h\in \widetilde{G}_{_C}$. 
\begin{align*}
    \psi_{_{K_l^*gK_l^*}}*\psi_{_{K_l^*hK_l^*}} (x)
    =\int_{\widetilde{G}(F)} \psi_{_{K_l^*gK_l^*}}(y) \psi_{_{K_l^*hK_l^*}}(y^{-1}x) dy.
\end{align*} 
From \eqref{basis 2}, we have
\begin{align*}
    \psi_{_{K_l^*gK_l^*}}(y)=\begin{cases}
        \varepsilon(\zeta), &y\in K_l^*g\zeta K_l^*, \\
        0, & \text{otherwise},
    \end{cases}\quad \text{ and }
    \hspace{.4cm}
    \psi_{_{K_l^*hK_l^*}}(y^{-1}x) =\begin{cases}
        \varepsilon(\zeta), & y\in xK_l^*h^{-1}\zeta^{-1}K_l^*,\\
        0, & \text{otherwise}.
    \end{cases}
\end{align*}

Therefore,
\begin{align*}
    \psi_{_{K_l^*gK_l^*}}*\psi_{_{K_l^*hK_l^*}} (x)
    &=\sum_{\zeta,\widetilde{\zeta}\in \mu_n(F)}\varepsilon({\zeta\widetilde{\zeta}})Vol(K_l^*g\zeta K_l^* \cap xK_l^*h^{-1}\widetilde{\zeta}^{-1}K_l^*)\\
    &=n\sum_{\zeta\in \mu_n(F)}\varepsilon(\zeta) Vol\Big({K_l}
    ^*gK_l^* \cap xK_l^*h^{-1}\zeta^{-1}K_l^*\Big).
\end{align*}

  This yields, 
\begin{align*}
    \psi_{_{K_l^*gK_l^*}}*\psi_{_{K_l^*hK_l^*}}
    =n\sum_{k\in\bz_{\geq 0}}\sum_{K_m^*g_1^*\tilde{t}_kg_2^*K_m^*}\Big[\sum_{\zeta\in \mu_n(F)}\varepsilon(\zeta) Vol\Big({K_l}
    ^*gK_l^* \cap g_1^*\tilde{t}_kg_2^*K_l^*h^{-1}K_l^*\zeta^{-1}\Big)\Big]\psi_{_{K_m^*g_1^*\tilde{t}_kg_2^*K_m^*}}.
\end{align*}
Also,
\begin{align*}
    &\psi_{_{{K_l'}^*g'{K_l'}^*}}*\psi_{_{{K_l'}^*h'{K_l'}^*}}\\ \vspace{.3cm}
    =n\sum_{k\in\bz_{\geq 0}}\sum_{K_m'^*g_1'^*\tilde{t'_k} g_2'^*K_m'^*}\Big[\sum_{\eta\in \mu_n(F')}(\varepsilon\circ\phi^{-1})(\eta)& Vol\Big({K_l'}
    ^*g'{K_l'}^* \cap g_1'^*\tilde{t'_k} g_2'^*{K_l'}^*h'^{-1}{K_l'}^*\eta^{-1}\Big)\Big]\psi_{_{K_m'^*g_1'^*\tilde{t'_k} g_2'^*K_m'^*}}\\
    =n\sum_{k\in\bz_{\geq 0}}\sum_{K_m'^*g_1'^*\tilde{t'_k}g_2'^*K_m'^*}\Big[\sum_{\zeta\in \mu_n(F)}\varepsilon(\zeta) Vol\Big({K_l'}
    ^*g'&{K_l'}^* \cap g_1'^*\tilde{t'_k} g_2'^*{K_l'}^*h'^{-1}{K_l'}^*\phi(\zeta^{-1})\Big)\Big]\psi_{_{K_m'^*g_1'^*\tilde{t'_k} g_2'^*K_m'^*}}.
\end{align*}

From Lemma~\ref{doublecosetbijection}, we have 
\begin{align*}
    Vol\Big({K_l}
    ^*gK_l^* \cap g_1^*\tilde{t}_kg_2^*K_l^*h^{-1}K_l^*\zeta^{-1}\Big)=Vol\Big({K_l'}
    ^*g'{K_l'}^* \cap g_1'^*\tilde{t'_k}g_2'^*{K_l'}^*h'^{-1}{K_l'}^*\phi(\zeta^{-1})\Big).
\end{align*}

Therefore,
\begin{align*}
    \Phi_m\big(\psi_{_{K_l^*gK_l^*}}*\psi_{_{K_l^*hK_l^*}}\big)=\psi_{_{{K_l'}^*g'{K_l'}^*}}*\psi_{_{{K_l'}^*h'{K_l'}^*}}.
\end{align*}  Since $\Phi_m$ agrees with $\Phi_l$ on $\mathcal{H}_\varepsilon(\widetilde{G}(F),K_l^*)$, we finally have $\Phi_l(h_1 *h_2)= \Phi_l(h_1)* \Phi_l(h_2)$.
\end{proof}

\subsection{Proof of Theorem \ref{isomorphism}} \label{proof of hecke lagebra isomorphism}The proof of this is similar to that of \cite[Theorem~4.1]{hecke}. 

By Theorem~\ref{finitely presented algebra}, the Hecke algebra $\mathcal{H}_\varepsilon(\widetilde{G}(F),K_l^*)$ is finitely presented. That is, we can write
\begin{align*}
    \mathcal{H}_\varepsilon(\widetilde{G}(F),K_l^*)\cong \frac{\mathbb C\langle x_1,x_2,\cdots,x_p\rangle}{\langle R_1,R_2,\cdots,R_q\rangle},
\end{align*}
where $\mathbb C\langle x_1,x_2,\cdots,x_p\rangle$ is the noncommutative polynomial ring in $p$ variables such that $R_i(x_1,x_2,\cdots,x_p)=0$ for $1\leq i \leq q$. \\
Suppose we index the elements of the set $S$ in Remark~\ref{finitely presented} as $f_1,f_2,\cdots,f_r$. These elements form a system of generators for $\mathcal{H}_\varepsilon(\widetilde{G}(F),K_l^*)$. Let $G_i, 1\leq i \leq p$ be polynomials in the $r$ variables such that 
\begin{align*}
    G_i(f_1,f_2,\cdots,f_r)=x_i,
\end{align*}
and $F_j, 1\leq j \leq r$ be polynomials in $p$ variables such that
\begin{align*}
    F_j(x_1,x_2,\cdots,x_p)=f_j.
\end{align*}
Let $N$ be the maximal degree of the polynomials $R_i(G_1,G_2,\cdots,G_p);1\leq i\leq q$ and $F_j(G_1,G_2,\cdots,G_p);1\leq j\leq r$. Let $D \subset \Lambda^+$ be a finite subset such that all possible products of $N$ terms of the $f_i$'s are contained in $\widetilde{G}_{_{D}}(F)$. Choose $n=n_{D}\geq l$ as in Lemma \ref{natural}. Suppose $F,F'$ are $n$-close; then we have an algebra homomorphism
\begin{align*}
   \tau: \mathbb C\langle x_1,x_2,\cdots,x_p\rangle &\longrightarrow \mathcal{H}_{\varepsilon'}(\widetilde{G}(F'),{K_l'}^*)\\
    x_i &\longmapsto G_i(f_1',f_2',\cdots,f_r'),
\end{align*}
where $f_i'=\Phi_l(f_i)$. We also have from Lemma \ref{algebra-iso} that
\begin{align*}
    \tau(R_j(x_1,x_2,\cdots,x_p))=0 , 1\leq j \leq q.
\end{align*}
This induces an algebra homomorphism 
\begin{align*}
    \mathscr{\bar{\tau}}: \mathcal{H}_\varepsilon(\widetilde{G}(F),K_l^*)\longrightarrow \mathcal{H}_{\varepsilon'}(\widetilde{G}(F'),{K_l'}^*).
\end{align*}
Again, by Lemma \ref{algebra-iso}, we have $ \mathscr{\bar{\tau}}(f_i)=\Phi_l(f_i)$. The elements $f_i$ generate $\mathcal{H}_\varepsilon(\widetilde{G}(F),K_l^*)$, and the double cosets $X(l)$ form a $\mathbb C$-basis for this algebra. Therefore, by Remark~\ref{finitely presented}, $\bar{\tau}=\Phi_l$.  Hence, $\Phi_l$ is an algebra homomorphism.

\begin{corollary} 
Let $F$ and $F'$ be sufficiently close local fields, and let $l$ be as in Theorem~\ref{isomorphism}. Then, we have the following bijection: 
\[
\begin{aligned}
&\{\text{ Irreducible genuine $\bc$-representations } (\pi,V) \text{ of } \gf\text{ such that  } \pi^{K_l^*}\neq 0\} \\
&\longleftrightarrow \{\text{ Irreducible genuine $\bc$-representations } (\pi',V') \text{ of } \hf\text{ such that  } \pi'^{K_l'^*}\neq 0 \} .
\end{aligned}
\]
\end{corollary}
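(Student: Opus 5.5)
The plan is to deduce the bijection from the algebra isomorphism $\Phi_l$ of Theorem~\ref{isomorphism}, using the standard equivalence between smooth representations generated by their $K$-fixed vectors and modules over the Hecke algebra. Since $\mu_n(F)$ acts by $\varepsilon$ on a genuine representation, the relevant idempotent is $e_l:=\psi_{K_l^*}$, normalized so that $\mu_n(F)\times K_l^*$ has measure one.

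\textbf{Step 1: the functor.} For a genuine smooth representation $(\pi,V)$, let $\mathcal{H}_\varepsilon(\widetilde{G}(F),K_l^*)$ act on $V^{K_l^*}$ by
\[
\pi(f)v=\int_{\widetilde{G}(F)} f(g)\pi(g)v\,dg .
\]
This integral is well defined because both $f$ and $g\mapsto \pi(g)v$ transform under $\mu_n(F)$ by the same character. Here $\varepsilon$ must be chosen compatibly with the genuine character, replacing it by $\varepsilon^{-1}$ if necessary. The projector onto $V^{K_l^*}$ is then $\pi(e_l)$, which shows that $V\mapsto V^{K_l^*}$ is a functor into $\mathcal{H}_\varepsilon$-modules.

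\textbf{Step 2: irreducible correspondence.} Apply the standard argument (Bernstein–Zelevinsky, or \cite{howe}) to the idempotent $e_l$ in the genuine Hecke algebra $\mathcal{H}_\varepsilon(\widetilde{G}(F))=e_\varepsilon\mathcal{H}(\widetilde{G}(F))$. It shows that $V\mapsto V^{K_l^*}$ gives a bijection between irreducible genuine representations with $V^{K_l^*}\neq0$ and simple $\mathcal{H}_\varepsilon(\widetilde{G}(F),K_l^*)$-modules. The two directions are as follows.
\begin{enumerate}
\item \emph{$V^{K_l^*}$ is simple.} Given a nonzero $v\in V^{K_l^*}$, irreducibility of $V$ gives $V=\mathcal{H}_\varepsilon(\widetilde{G}(F))v$. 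Hence $V^{K_l^*}=e_l\mathcal{H}_\varepsilon e_l\, v$.
\item \emph{Every simple module arises.} Given a simple module $M$, form $\mathcal{H}_\varepsilon(\widetilde{G}(F))e_l\otimes_{e_l\mathcal{H}_\varepsilon e_l}M$. Take its quotient by the largest subrepresentation $W$ with $W^{K_l^*}=0$. The result is irreducible with $K_l^*$-invariants equal to $M$.
\end{enumerate}
Injectivity holds because two irreducible representations with isomorphic $K_l^*$-invariant modules are both the irreducible quotient of the same induced module. Simple modules are finite dimensional because the algebra is finitely generated over a commutative algebra, as shown in the proof of Theorem~\ref{finitely presented algebra}. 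Admissibility is not needed for the bijection itself.

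\textbf{Step 3: transport.} Take $m$ as in Theorem~\ref{isomorphism}. Then $\Phi_l$ is an algebra isomorphism and induces a bijection on simple modules, $M\mapsto M$ with $\mathcal{H}'$ acting through $\Phi_l^{-1}$. Composing the three bijections, Step 2 for $F$, then $\Phi_l$, then Step 2 inverted for $F'$, gives the corollary.

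The main obstacle is Step 2, specifically the check that the Bernstein–Zelevinsky idempotent argument carries over to the genuine setting. The concrete worry is that $e_l$ must be an idempotent in the genuine algebra. This holds with the stated normalization: $\psi_{K_l^*}*\psi_{K_l^*}=\psi_{K_l^*}$ follows from Lemma~\ref{vol} with $g=h=1$. The rest of the argument is formal, and it is this equivalence that the paper calls the Section 8 equivalence of categories.
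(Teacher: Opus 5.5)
Your argument is correct, but it reaches the corollary by a different and more elementary route than the paper.

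\textbf{The paper's route.} It deduces the bijection from Corollary~\ref{equivalence of categories}: the full equivalence $V\mapsto V^{K_l^*}$ between the category $\mathcal{R}^l(\widetilde{G}(F))$ of genuine representations generated by their $K_l^*$-fixed vectors and $\mathcal{H}_\varepsilon(\widetilde{G}(F),K_l^*)$-modules. Proving that equivalence requires $\mathcal{R}^l$ to be closed under subquotients. That closure comes from several inputs:
\begin{itemize}
\item the Bernstein decomposition for covers;
\item the Jacquet-module properties of $K_l^*$ in Proposition~\ref{properties of congruence};
\item the BDKV-type criterion of Proposition~\ref{condition for fixed vectors};
\item the finiteness of $\Sigma_l$.
\end{itemize}

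\textbf{Your route.} You use only the idempotent correspondence between irreducibles with $V^{K}\neq 0$ and simple $e_K\mathcal{H}e_K$-modules. This holds for every compact open subgroup of every $\ell$-group and needs no Bernstein theory. Your parts (1) and (2) and the injectivity argument are the standard proof, and they go through verbatim for the genuine algebra $e_\varepsilon\mathcal{H}(\widetilde{G}(F))$.

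\textbf{Comparison.} Your route is more self-contained and suffices for the corollary as stated. The paper's route yields more: an equivalence $\mathcal{R}^l(\widetilde{G}(F))\simeq\mathcal{R}^l(\widetilde{G}(F'))$ of whole categories, of which the bijection on irreducibles is only a shadow. For the same reason your closing sentence is inaccurate. What you prove in Step~2 is strictly weaker than the Section~8 equivalence, since it never needs closure under subquotients, and that is exactly why it is cheaper.

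\textbf{A small correction to Step~1.} The integral $\int f(g)\pi(g)v\,dg$ is always defined, since it is a finite sum. What matters is that the integrand be $\mu_n(F)$-invariant, i.e.\ $\pi(\zeta)=\varepsilon(\zeta)^{-1}$. So $f$ and $\pi$ must transform by \emph{inverse} characters, not the same one; otherwise $\pi(e_l)=0$ whenever $n\ge 3$. Your caveat about replacing $\varepsilon$ by $\varepsilon^{-1}$ repairs this. Because $\varepsilon'=\varepsilon\circ\phi^{-1}$, the central characters then match on the two sides of the bijection.
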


\begin{proof}
By Theorem~\ref{isomorphism}, the genuine Hecke algebras of $\gf$ and $\hf$ are isomorphic. The corollary then follows from Corollary~\ref{equivalence of categories}.
\end{proof}

\section{Equivalence of Categories}
\begin{proposition}\label{properties of congruence}
    The subgroup $K_m^*$ satisfies the following conditions:
    \begin{enumerate}[label=\roman*.]
        \item Let $\tl$ be a Levi subgroup, and let $\tp = \tl U$ be a parabolic subgroup of $\gf$ with Levi $\tl$. Let $K$ be a $\gf$-conjugate of $K_m^*$, and let $K_{\tp}=K\cap \tp/K\cap U$. For any parabolic subgroup $\widetilde{Q}$ of $\gf$ with the same Levi subgroup $\tl$ and any other $\gf$-conjugate $K_0$ of $K_m^*$,  $(K_0)_{\widetilde{Q}}$ is a conjugate of $K_{\tp}$ in $\tl$. 
        \item  For every parabolic $\tp = \tl U$ and every representation $V$ of $\gf$, the canonical map
\[ V^{K_m^*} \longrightarrow (V_U)^{({K_m^*})_{{\tl}}} \] is surjective.
    \end{enumerate}
\end{proposition}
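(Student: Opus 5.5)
The plan is to reduce both statements to the corresponding facts for $\mathrm{SL}_2$ and its congruence subgroup $K_m$. The reduction rests on three facts. First, $K_m^*=\{(k,1):k\in K_m\}$ maps isomorphically onto $K_m$ under $p$. Second, every unipotent radical $U$ splits canonically in $\gf$ (Section~\ref{unipotent splitting}). Third, conjugation in $\gf$ covers conjugation in $G(F)$.

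For $\mathrm{SL}_2$ the only proper parabolics are $\gf$ itself and the Borel subgroups. When $\tl=\gf$, statement (i) is trivial: $K_{\tp}=K$ and $(K_0)_{\widetilde Q}=K_0$ are both conjugates of $K_m^*$. Otherwise $\tl$ is the inverse image of a torus $T^g$, and we may conjugate to assume $\tl=\widetilde T$. The two parabolics with Levi $\widetilde T$ are $\widetilde B=\widetilde T U_+$ and $\widetilde{B}^-=\widetilde TU_-$. They are conjugate by $\tilde w(1)$, which normalizes $\widetilde T$.

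For (i), write $K=xK_m^*x^{-1}$ with $x\in\gf$. By the Iwasawa decomposition $G(F)=B\,G(\mathscr O)$ we can write $x=b\kappa$ with $b\in\widetilde B$ and $\kappa\in\widetilde G(\mathscr O)$. Since $K_m^*$ is normal in $\widetilde G(\mathscr O)$, this gives $K=bK_m^*b^{-1}$. Writing $b=\tau u$ with $\tau\in\widetilde T$ and $u\in U_+$, we then compute
\[
K_{\widetilde B}=\tau\,\big(uK_m^*u^{-1}\cap\widetilde B\big)/\big(uK_m^*u^{-1}\cap U_+\big)\,\tau^{-1}.
\]
The key sub-step is to show that the image of $uK_m^*u^{-1}\cap \widetilde B$ in $\widetilde T$ is exactly $\widetilde T_m$. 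This is a matrix computation in $\mathrm{SL}_2$: the diagonal entries of elements of $uK_mu^{-1}\cap B$ remain in $1+\mathscr P^m$. One then lifts the computation using $K_m^*\cap \widetilde T=\widetilde T_m$ and the fact that the section $(k,1)$ is compatible with the splitting of $U_\pm$ and of $\widetilde T_m$ via $\tilde\lambda_0$.

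Hence $K_{\widetilde B}$ is a $\widetilde T$-conjugate of $\widetilde T_m$. The same argument for $\widetilde B^-$, or conjugation by $\tilde w(1)$, gives the same conclusion for any $K_0$ and any $\widetilde Q$. Since all conjugates of $\widetilde T_m$ in $\widetilde T$ are conjugates of each other, (i) follows. One point needs care: $\widetilde T$ need not be abelian, but this does not matter, because the claim only asks for conjugacy in $\tl$.

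For (ii) I would use the standard Iwahori-factorization argument of Bernstein (see \cite[\S 3]{BDKV} or Bushnell's exposition). The Iwahori factorization $K_m^*=\widetilde U_{-,m}\,\widetilde T_m\,\widetilde U_{+,m}$ established in the proof of Lemma~\ref{natural} gives $(K_m^*)_{\widetilde T}=\widetilde T_m$. Let $\bar v\in (V_U)^{\widetilde T_m}$, and lift it to $v\in V^{\widetilde T_m}$ after averaging over the compact group $\widetilde T_m$. Then choose $\tau=\tilde t_k^{-1}$ with $k\gg0$ strictly contracting $U_-$ and expanding $U_+$, so that $\tau^{-1}\widetilde U_{-,m}\tau$ is small and fixes $\pi(\tau^{-1})v$ up to the kernel of the Jacquet map. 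Put $w=e_{K_m^*}\,\pi(\tau)^{j}\cdots$ and compute its image in $V_U$ using $\widetilde U_{+,m}$-invariance modulo $V(U)$. The image of $e_{K_m^*}$ then acts as $e_{\widetilde T_m}$ composed with a $\widetilde T$-translation. Since $\tau$ acts invertibly on $V_U$, we can pre-translate to obtain a preimage of $\bar v$.

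The main obstacle is this lifting in (ii). It requires the Jacquet lemma for the cover, namely that $\tilde t_k$ lies in the center of $\widetilde T$ modulo the relevant finite part. It also requires checking that Section~\ref{torus normalize unipotent} gives exactly the same contraction as in the linear case, with no cocycle twist. Once those are in place, Bernstein's linear proof transfers verbatim.
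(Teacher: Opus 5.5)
Your part (i) is correct and is essentially the paper's argument. The Iwasawa decomposition, together with normality of $K_m^*$ in $\go$, makes $K$ a $\widetilde B$-conjugate of $K_m^*$. Your ``matrix computation'' then amounts to $uK_m^*u^{-1}\cap\widetilde B=u(K_m^*\cap\widetilde B)u^{-1}$ plus the Iwahori factorization, so the image in $\widetilde T$ is $\widetilde T_m$. You are more explicit than the paper about $\tl=\gf$ and the opposite Borel, which is fine.

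There is a genuine gap in (ii), at the last step: ``since $\tau$ acts invertibly on $V_U$, we can pre-translate.'' Stated correctly, the contraction argument gives only the following. If $v\in V^{\widetilde T_m}$ lifts $\bar v$ and is fixed by $\tau^{-j}\widetilde U_{-,m}\tau^{j}$, then $\pi(\tau^j)v$ is fixed by $\widetilde T_m\widetilde U_{-,m}$; here $\tau$ centralizes $\widetilde T_m$ because the tame Hilbert symbol is trivial on $1+\mathscr P$. So $e_{K_m^*}\pi(\tau^j)v\in V^{K_m^*}$ maps to $\pi_U(\tau)^j\bar v$, up to the modulus factor. Consequently the image $W$ of $V^{K_m^*}$ is $\pi_U(\tau)$-stable and $(V_U)^{\widetilde T_m}=\bigcup_j\pi_U(\tau)^{-j}W$. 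However, $j$ depends on $\bar v$, and nothing shows that $W$ is stable under $\pi_U(\tau)^{-1}$.

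Pre-translating is circular. The lift $\pi(\tau^{-j})v$ of $\pi_U(\tau)^{-j}\bar v$ is a priori fixed only by $\tau^{-2j}\widetilde U_{-,m}\tau^{2j}$. Rerunning the argument on it therefore only yields $\pi_U(\tau)^{2j}\pi_U(\tau)^{-j}\bar v=\pi_U(\tau)^{j}\bar v\in W$ again.

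What is missing is a \emph{uniform} exponent.
\begin{itemize}
\item If $(V_U)^{\widetilde T_m}$ is finite-dimensional (e.g.\ $V$ admissible), one $j$ serves for a basis. Then $W\supseteq\pi_U(\tau)^j(V_U)^{\widetilde T_m}=(V_U)^{\widetilde T_m}$.
\item For arbitrary smooth $V$, as the statement requires, you must first reduce to $V$ of finite type. This works because invariants and the exact Jacquet functor commute with direct limits, and it is exactly the step the paper takes before invoking the standard argument.
\item You then need a finiteness property of such $V$. For example, suppose $(V_U)^{\widetilde T_m}$ is finitely generated over a commutative algebra (such as the Bernstein centre) that commutes with $\pi_U(\tau)$ and preserves $W$. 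Then finitely many generators lie in a single $\pi_U(\tau)^{-J}W$, which forces $W=(V_U)^{\widetilde T_m}$.
\end{itemize}
Finite generation over $\widetilde T$ alone does not suffice, since it gives no control over the $\mathbb C[\tau]$-submodule $W$; compare $\mathbb C[\tau]\subset\mathbb C[\tau^{\pm1}]$.

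The obstacles you flag are not the real issue. Centrality of $\tilde t_k$ is not needed; centralizing $\widetilde T_m$ suffices. Section~\ref{torus normalize unipotent} already shows the contraction has no cocycle twist.
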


\begin{proof}
Consider the maximal split torus consisting of diagonal matrices $T $ of $G(F)$, and denote its inverse image by $\widetilde{T}$.  It suffices to prove the proposition for the Levi subgroups $\tl$ that contain $\widetilde{T}$. 

Let $\tp$ be a parabolic subgroup containing $\tl$. Then we have the following Iwasawa decomposition
\begin{align}\label{iwasawa}
    \gf=\tp \,G(\mathscr{O})^* .
\end{align}
   Let $\widetilde{\overline{P}}$ be the opposite parabolic of $\tp$, having the same Levi $\tl$. Let $U$ and $\overline{U}$ be the unipotent radicals of $\tp$ and $\widetilde{\overline{P}}$, respectively. We have
   \begin{align*}
       K_m^*= (K_m^*\cap U)(K_m^*\cap \tl)(K_m^*\cap \overline{U}).
   \end{align*}
   
    Hence, $K_m^*\cap \tp=(K_m^*\cap \tl)(K_m^*\cap U)$ and hence $K_m^*\cap \tp/ K_m^* \cap U\simeq K_m^*\cap \tl$.

Now, $K=gK_m^*g^{-1}$, for some $g\in \gf$. From \eqref{iwasawa} and using the fact that $K_m^*$ is normal in $G(\mathscr{O})^*$, we have $K=pK_m^*p^{-1}$, for some $p\in \tp$. Hence
\begin{align*}
    K\cap \tp= p(K_m^*\cap \tp)p^{-1} \qquad \text{and}\qquad K\cap U=p(K_m^*\cap U)p^{-1}.
\end{align*}
Therefore
    \begin{align*}
        K_{\tp}=p\big(K_m^*\cap \tp/K_m^*\cap U)p^{-1}\cong p(K_m^*\cap \tl) p^{-1}.
    \end{align*}
Now, we can write $p=lu$, for some $l\in \tl, u\in U$. Since $U$ acts trivially on the Levi quotient, we have:
\begin{align*}
    K_{\tp}= l (K_m^*\cap \tl) l^{-1}, \quad \text{ for some } l\in\tl.
\end{align*}

    For the proof of \((ii)\), by passing to the inductive limit, we may assume that \(V\) is of finite type. The rest of the proof follows the same argument as in \cite[Prop. 3.5.2]{principal}. 
\end{proof}

\begin{proposition}\label{condition for fixed vectors}

 Let $V$ be a genuine representation of $\gf$. With respect to the notions of supercuspidal pairs and Bernstein blocks introduced in Section~\ref{hecke algebra}, $V$ is generated by its \( K_m^* \)-fixed vectors if and only if the following condition holds:

For every supercuspidal pair \( (\tl, \widetilde{\rho}) \), let \( W \) be a representation of \( \tl \) whose isomorphism class lies in \( \widetilde{\rho} \). If \( W^{(K_m^*)_{\tl}} = 0 \), then the component of \( V \) in  $\mathcal{R}(\gf){(\tl,\widetilde{\rho})}$ is zero.

\end{proposition}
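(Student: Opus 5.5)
The plan is to follow the argument of Bernstein (as presented in Bushnell--Deligne--Kazhdan--Vignéras, \S3) for linear groups. Its inputs are the Bernstein decomposition (i) from the proof of Theorem~\ref{finitely presented algebra} and the two properties of $K_m^*$ in Proposition~\ref{properties of congruence}. Write $V=\bigoplus V_{(\tl,\widetilde{\rho})}$ according to (i). Each block is stable under $\gf$, and $V^{K_m^*}=\bigoplus V_{(\tl,\widetilde{\rho})}^{K_m^*}$. Hence $V$ is generated by its $K_m^*$-fixed vectors if and only if each component is. This reduces both directions to a single block $\mathcal{R}(\gf)(\tl,\widetilde{\rho})$.

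\emph{Necessity.} Suppose $V$ is generated by $V^{K_m^*}$ and $W^{(K_m^*)_{\tl}}=0$ for every $W$ in the inertial class $\widetilde{\rho}$. Assume the block component $V_1$ is nonzero. Then $V_1$ is generated by $K_m^*$-fixed vectors, so it has a nonzero finitely generated subrepresentation with this property. That subrepresentation has an irreducible quotient $\pi$ with $\pi^{K_m^*}\neq0$, since taking $K_m^*$-invariants is exact. The quotient $\pi$ lies in the same block, so for a suitable parabolic $\tp=\tl U$ we have $\pi\hookrightarrow\iota_{\tp}^{\gf}(\chi\widetilde{\rho})$ and the Jacquet module $\pi_U$ is nonzero with all subquotients unramified twists of $\widetilde{\rho}$. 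By Proposition~\ref{properties of congruence}(ii), $\pi^{K_m^*}\to(\pi_U)^{(K_m^*)_{\tl}}$ is surjective. We also need this map to be nonzero, and for that I would use Jacquet's lemma on the canonical projection restricted to $K$-invariants (injectivity on the part of $\pi^{K_m^*}$ not killed by the projection), adapted to the cover using the canonical splitting of $U$. Since $\pi_U$ has finite length with subquotients in the class $\widetilde{\rho}$, exactness of invariants for the compact group $(K_m^*)_{\tl}$ gives a subquotient $W$ with $W^{(K_m^*)_{\tl}}\neq0$. This is a contradiction.

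\emph{Sufficiency.} Suppose every block with $V_1\neq0$ satisfies $W^{(K_m^*)_{\tl}}\neq0$. Let $V'\subset V_1$ be the subrepresentation generated by $V_1^{K_m^*}$. I want to show $V_1/V'=0$. We have $(V_1/V')^{K_m^*}=0$ by exactness. So it suffices to show that any nonzero $X$ in this block has $X^{K_m^*}\neq0$. Take a nonzero finitely generated subrepresentation of $X$ and an irreducible quotient $\pi$ of it. It suffices to find $K_m^*$-fixed vectors in the subrepresentation itself; it is cleaner to work with a finitely generated $X$ and show that $X^{K_m^*}\neq 0$ via a Frobenius reciprocity argument. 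Choose $\tp$ with $X_U\neq0$. Then $X_U$ is a nonzero finitely generated representation of $\tl$ whose irreducible subquotients are unramified twists of $\widetilde{\rho}$. The cuspidal block of $\tl$ is controlled by $\widetilde{\rho}$ restricted to $\tl^\circ$, so the hypothesis together with Proposition~\ref{properties of congruence}(i) (independence of the parabolic and of the conjugate) gives $(X_U)^{(K_m^*)_{\tl}}\neq0$. Proposition~\ref{properties of congruence}(ii) then lifts such a vector to a nonzero element of $X^{K_m^*}$.

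The main obstacle is the passage, in both directions, from invariants of one member $W$ of the inertial class to invariants of every unramified twist, and to the Jacquet module of a finitely generated (not only irreducible) block member. For this I would use that $(K_m^*)_{\tl}\subset\tl^\circ$, so unramified characters are trivial on it, together with the finite-length structure of cuspidal $\tl^\circ$-representations from Kaplan--Szpruch. The genuine-character bookkeeping should be harmless because $K_m^*\cap\mu_n(F)=1$.
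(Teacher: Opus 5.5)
Your reduction to a single Bernstein block and your sufficiency direction are fine, and they match the BDKV argument the paper invokes. An irreducible quotient of the nonzero finitely generated module $X_U$ is a twisted conjugate of $\widetilde{\rho}$. It has nonzero $(K_m^*)_{\tl}$-invariants by the hypothesis, (i), and $(K_m^*)_{\tl}\subset\tl^\circ$. Exactness of invariants then gives $(X_U)^{(K_m^*)_{\tl}}\neq 0$, and (ii) lifts this to $X^{K_m^*}\neq 0$.

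The necessity direction, however, has a genuine gap at exactly the step you flag. Under the assumption you want to contradict, every subquotient of $\pi_U$ has no $(K_m^*)_{\tl}$-invariants. So $(\pi_U)^{(K_m^*)_{\tl}}=0$, and the surjectivity in (ii) is vacuous. Asserting that $\pi^{K_m^*}\to(\pi_U)^{(K_m^*)_{\tl}}$ is nonzero is equivalent to what you are trying to prove. Jacquet's lemma in the form you quote (injectivity on the part of $\pi^{K_m^*}$ not killed by the projection) only says the map is injective modulo its kernel. It gives no reason why $\pi^{K_m^*}$ is not entirely contained in the kernel $\pi^{K_m^*}\cap\pi(U)$. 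To make that route work you would have to show that the Hecke operator of a strictly dominant torus element is not nilpotent on $\pi^{K_m^*}$. That needs an extra ingredient you do not supply, for instance the criterion that an irreducible representation with compactly supported matrix coefficients is supercuspidal, transported to the cover.

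The missing idea is that necessity should come from property (i) by a Mackey-type computation, not from (ii). You already have $\pi\hookrightarrow\iota_{\tp}^{\gf}(\sigma)$, with $\sigma$ an unramified twist of $\widetilde{\rho}$ (or of a conjugate of it, which (i) also controls). A nonzero $v\in\pi^{K_m^*}$ gives a nonzero right $K_m^*$-invariant function $f$ in the induced space; choose $g$ with $f(g)\neq 0$. For $p\in\tp\cap gK_m^*g^{-1}$, write $pg=gk$ with $k\in K_m^*$. Then $f(g)=f(gk)=f(pg)=\delta_{\tp}^{1/2}(p)\sigma(p)f(g)$. Since $\delta_{\tp}$ is trivial on compact subgroups, $f(g)$ is a nonzero vector of $\sigma$ fixed by $(gK_m^*g^{-1})_{\tp}$. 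By (i) this group is $\tl$-conjugate to $(K_m^*)_{\tl}$, so $\sigma^{(K_m^*)_{\tl}}\neq 0$. Unramified characters are trivial on the compact group $(K_m^*)_{\tl}$, so this gives $W^{(K_m^*)_{\tl}}\neq 0$, the desired contradiction. This is precisely the role of (i) in the argument; (ii) is only needed for the sufficiency direction.
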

\begin{proof}
    This result follows directly from Proposition~\ref{properties of congruence}, by an argument similar to that of \cite[Prop.~3.8]{BDKV}.
\end{proof}

Let $\mathcal{H}_\varepsilon(\gf)$ be the set of genuine, locally constant, compactly supported functions $f$ on $\gf$ such that $f(\zeta g)=\varepsilon(\zeta)f(g)$ for all $\zeta\in \mu_n(F)$ and $g\in \gf$, which forms an algebra under convolution. Furthermore, let $\mathcal{R}^m(\gf)$ be the subcategory of $\mathcal{R}(\gf)$ consisting of genuine representations $(\sigma, V)$ generated by their $K_m^*$-fixed vectors; that is, $\sigma(\gf)(V^{K_m^*}) = V$.

\begin{corollary}\label{equivalence of categories}
    The category $\mathcal{R}^m(\gf)$ is closed under sub-quotients and hence the functor 
    \begin{align*}
        J_m : \mathcal{R}^m(\gf) &\longrightarrow \mathcal{H}_\varepsilon(\gf, K_m^*)\text{-mod}, \\ (\sigma, V) &\longmapsto V^{K_m^*},
    \end{align*}
    is an equivalence of categories with the left adjoint
    \begin{align*}
        \widetilde{J}_m : \mathcal{H}_\varepsilon(\gf, K_m^*)\text{-mod} &\longrightarrow \mathcal{R}^m(\gf), \\ V^{K_m^*} &\longmapsto \mathcal{H}_\varepsilon(\gf) \otimes_{\mathcal{H}_\varepsilon(\gf, K_m^*)} V^{K_m^*}.
    \end{align*}
\end{corollary}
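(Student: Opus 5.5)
The plan follows the Bernstein formalism of \cite[\S 3]{BDKV}, with Proposition~\ref{condition for fixed vectors} as the key input. First I will show that $\mathcal{R}^m(\gf)$ is closed under subquotients. Quotients are immediate: if $V$ is generated by $V^{K_m^*}$, then any quotient $V/W$ is generated by the image of $V^{K_m^*}$. This image lies in $(V/W)^{K_m^*}$ because taking invariants under the compact open subgroup $K_m^*$ is exact, via the idempotent $\psi_{K_m^*}$.

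For subrepresentations I will use the criterion of Proposition~\ref{condition for fixed vectors}. By the Bernstein decomposition (item (i) in the proof of Theorem~\ref{finitely presented algebra}), $V$ splits as a direct sum of its components $V_{(\tl,\widetilde{\rho})}$, and a subrepresentation $W\subset V$ satisfies $W_{(\tl,\widetilde{\rho})}=W\cap V_{(\tl,\widetilde{\rho})}$. The condition in Proposition~\ref{condition for fixed vectors} only asks that certain Bernstein components vanish, and this condition is inherited from $V$ by $W$. Hence $W$ is generated by its $K_m^*$-fixed vectors. This step carries the real content, but it is already packaged in Propositions~\ref{properties of congruence} and~\ref{condition for fixed vectors}. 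The remaining task is to check that the Bernstein projection commutes with passing to subrepresentations, which holds because the decomposition is a decomposition of the category.

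Next I will set up the adjunction. Write $e=\psi_{K_m^*}$, the idempotent of $\mathcal{H}_\varepsilon(\gf)$ with $\mathcal{H}_\varepsilon(\gf,K_m^*)=e\mathcal{H}_\varepsilon(\gf)e$, so that $V^{K_m^*}=eV$ for any genuine smooth $V$. Then $\widetilde{J}_m(M)=\mathcal{H}_\varepsilon(\gf)e\otimes_{e\mathcal{H}_\varepsilon e}M$. This is the standard left adjoint to $V\mapsto eV$, and its image is generated by $e\otimes M$, so it lies in $\mathcal{R}^m(\gf)$. I will also check that $J_m\widetilde{J}_m(M)=e\mathcal{H}_\varepsilon e\otimes M\cong M$, so the unit of the adjunction is an isomorphism.

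Finally, I will show that the counit $\widetilde{J}_m J_m(V)\to V$ is an isomorphism for $V\in\mathcal{R}^m(\gf)$. It is surjective because $V$ is generated by $eV$. Let $W$ denote its kernel. Applying the exact functor $J_m$ and using the unit isomorphism gives $eW=0$. Since $\widetilde{J}_mJ_m(V)$ lies in $\mathcal{R}^m(\gf)$, the subrepresentation $W$ does too, by the closure under subrepresentations established above. A representation in $\mathcal{R}^m(\gf)$ with no $K_m^*$-fixed vectors is zero, so $W=0$. This argument would fail without the closure property, which is why I expect the subrepresentation step to be the main obstacle.
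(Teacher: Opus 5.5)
Your proposal is correct and follows the paper's route: closure under subquotients comes from the Bernstein decomposition together with Proposition~\ref{condition for fixed vectors}. The paper phrases this as $\mathcal{R}^m(\gf)$ being the direct sum of the blocks indexed by $\Sigma_m$, each a Serre subcategory; the finiteness of $\Sigma_m$ it also records is not needed here. Your unit/counit argument with the idempotent $e=\psi_{K_m^*}$ correctly fills in the standard ``hence'' step, which the paper leaves implicit.
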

\begin{proof}
  By the Bernstein decomposition \cite[Theorem 3.9]{KaplanDani}, the category $\mathcal{R}(\gf)$ decomposes as the direct sum
\begin{align*}
    \mathcal{R}(\gf) = \bigoplus_{(\tl,\widetilde{\rho}) \in \mathfrak{B}(\gf)} \mathcal{R}(\gf){(\tl,\widetilde{\rho})},
\end{align*}
where $\mathfrak{B}(\gf)$ denotes the set of equivalence classes of supercuspidal pairs from Section~\ref{hecke algebra}. Let $\Sigma_m \subset \mathfrak{B}(\gf)$ be the subset defined by 
\begin{align*}
    \Sigma_m = \left\{ (\tl, \widetilde{\rho}) \in \mathfrak{B}(\gf) \mid \exists W \in \widetilde{\rho} \text{ such that } W^{(K_m^*)_{\tl}} \neq 0 \right\}.
\end{align*}
It follows from Proposition~\ref{condition for fixed vectors} that the subcategory $\mathcal{R}^m(\gf)$ consisting of representations generated by $V^{K_m^*}$ is given precisely by
\begin{align*}
     \mathcal{R}^m(\gf) = \bigoplus_{(\tl, \widetilde{\rho}) \in \Sigma_m} \mathcal{R}(\gf){(\tl,\widetilde{\rho})}.
\end{align*}
According to \cite[\S 3]{BDKV}, the Bernstein-Zelevinsky finiteness theorem for reductive groups \cite[\S 4]{BernsteinZelevinsky1976} extends naturally to finite central extensions. Hence, for the compact subgroup $K_m^*$, only finitely many inertial classes $(\tl,\widetilde{\rho})$ admit nonzero $K_m^*$-fixed vectors, which implies that the index set $\Sigma_m$ is finite.

\qquad Each Bernstein block $\mathcal{R}(\gf){(\tl,\widetilde{\rho})}$ is a Serre subcategory; hence, it is closed under taking subquotients. Since a finite direct sum of Serre subcategories is again a Serre subcategory, it follows that $\mathcal{R}^m(\gf)$ is also closed under taking subquotients.
\end{proof}

\textbf{Acknowledgements.} I would like to thank my supervisor, Radhika Ganapathy, for suggesting this problem and sharing many valuable insights. I am also grateful to her for her thorough review of the paper and her helpful suggestions on the writing. This work is supported by the NBHM scholarship (Ref. No. 0203/6/2022/R\&D-II/3456) and the DST-FIST programme (grant no. DST FIST-2021 [TPN-700661]).

\bibliographystyle{siam}

\bibliography{ref}

\end{document}